\providecommand{\tabularnewline}{\\}
\def\RSsubtxt{section~}\newref{sub}{name = \RSsubtxt}}
\def\RSthmtxt{theorem~}\newref{thm}{name = \RSthmtxt}}
\def\RSlemtxt{lemma~}\newref{lem}{name = \RSlemtxt}}
\numberwithin{equation}{section}
\numberwithin{figure}{section}
\theoremstyle{plain}
\newtheorem{thm}{\protect\theoremname}
  \theoremstyle{remark}
  \newtheorem{rem}[thm]{\protect\remarkname}
  \theoremstyle{plain}
  \newtheorem{fact}[thm]{\protect\factname}
  \theoremstyle{plain}
  \newtheorem{lem}[thm]{\protect\lemmaname}
\def\RSsubtxt{Section~}\newref{sub}{name = \RSsubtxt}
\def\RSsubtxt{Section~}\newref{sec}{name = \RSsubtxt}
\def\RSthmtxt{Theorem~}\newref{thm}{name = \RSthmtxt}
\def\RSlemtxt{Lemma~}\newref{lem}{name = \RSlemtxt}
  \providecommand{\factname}{Fact}
  \providecommand{\lemmaname}{Lemma}
  \providecommand{\remarkname}{Remark}
\providecommand{\theoremname}{Theorem}
\begin{document}
\global\long\def\intr{\int_{R}}
 \global\long\def\sbr#1{\left[ #1\right] }
\global\long\def\cbr#1{\left\{  #1\right\}  }
\global\long\def\rbr#1{\left(#1\right)}
\global\long\def\ev#1{\mathbb{E}{#1}}
\global\long\def\R{\mathbb{R}}
\global\long\def\norm#1#2#3{\Vert#1\Vert_{#2}^{#3}}
\global\long\def\pr#1{\mathbb{P}\rbr{#1}}
\global\long\def\cleq{\lesssim}
\global\long\def\ceq{\eqsim}
\global\long\def\conv{\rightarrow}
\global\long\def\Var#1{\text{Var}(#1)}
\global\long\def\TDD#1{{\color{red}To\, Do(#1)}}
\global\long\def\dd#1{\textnormal{d}#1}
\global\long\def\inti{\int_{0}^{\infty}}
\global\long\def\crr{\mathcal{C}([0;\infty),\R)}
\global\long\def\sb#1{\langle#1\rangle}
\global\long\def\pm#1{d_{P}\rbr{#1}}
\global\long\def\crt{\mathcal{C}([0;T],\R)}
\global\long\def\nuu{\nu_{n;\lambda}}
\global\long\def\ZZ{Z_{\Lambda_{n}}}
\global\long\def\PP{\mathbb{P}_{\Lambda_{n}}}
\global\long\def\EE{\mathbb{E}_{\Lambda_{n}}}
\global\long\def\LL{\Lambda_{n}}
\global\long\def\AA{\mathcal{A}}
\global\long\def\evx{\mathbb{E}_{x}}
\global\long\def\pin#1{1_{\cbr{#1\in\mathcal{A}}}}
\global\long\def\Zd{\mathbb{Z}^{d}}
\global\long\def\TT{T_{n}}
\global\long\def\ZZa{Z_{n;\epsilon}}
\global\long\def\emax{\bar{\epsilon}}
\global\long\def\emin{\underbar{\ensuremath{\epsilon}}}
\global\long\def\estar{\epsilon^{*}}
\global\long\def\ee{\mathbf{e}}
\global\long\def\ddp#1#2{\langle#1,#2\rangle}
\global\long\def\intc#1{\int_{0}^{#1}}
\global\long\def\T#1{\mathcal{P}_{#1}}
\global\long\def\ii{\mathbf{i}}
\global\long\def\star#1{\left.#1^{*}\right.}
\global\long\def\pspace{\mathcal{C}}
\global\long\def\eq{\varphi}
\global\long\def\grad{\text{grad}}
\global\long\def\var{\text{Var}}
\global\long\def\ab{[a;b]}
\global\long\def\TTV#1#2#3{\text{TV}^{#3}\!\rbr{#1,#2}}
\global\long\def\UTV#1#2#3{\text{UTV}^{#3}\!\rbr{#1,#2}}
\global\long\def\DTV#1#2#3{\text{DTV}^{#3}\!\rbr{#1,#2}}

\linespread{1.03}
\title{On truncated variation, upward truncated variation and downward truncated
variation for diffusions}

\author{{ Rafa\l{} M. \L{}ochowski $^\dag$ } \\{\tiny  Department of Mathematics and Mathematical Economics,\\Warsaw School of Economics, \\
Madali\'{n}skiego  6/8, 02-513 Warszawa, Poland \\email:rlocho@sgh.waw.pl }\\~\\Piotr Mi\l{}o\'s$^*$ \\{\tiny Faculty of Mathematics, Informatics, and Mechanics,\\Banacha 2, 02-097 Warszawa, Poland\\email:pmilos@mimuw.edu.pl}}
\thanks{$^*$Corresponding author: tel: +48 22-55-44-122, fax:+48 22-55-44-300. The author was partially supported by the National Science Centre under decision no. DEC-2011/01/B/ST1/05089 and by the Ministry of Science grant N N201 397537}
\thanks{$^\dag$This research was supported in part by the National Science Centre under decision no. DEC-2011/01/B/ST1/05089}

\begin{abstract} 
The truncated variation, $\text{TV}^{c}$, is a fairly new concept
introduced in \cite{Lochowski:2008}. Roughly speaking, given a c\`{a}dl\`{a}g
function $f$, its truncated variation is {}``the total variation
which does not pay attention to small changes of $f$, below some
threshold $c>0$''. The very basic consequence of such approach is
that contrary to the total variation, $\text{TV}^{c}$ is always finite.
This is appealing to the stochastic analysis where so-far large classes
of processes, like semimartingales or diffusions, could not be studied
with the total variation. Recently in \cite{ochowski:2011lr}, another
characterization of $\text{TV}^{c}$ was found. Namely $\text{TV}^{c}$
is the smallest possible total variation of a function which approximates $f$
uniformly with accuracy $c/2$. Due to these properties we envisage
that $\text{TV}^{c}$ might be a useful concept both in the theory
and applications of stochastic processes. 

For this reason we decided to determine some properties of $\text{TV}^{c}$
for some well-known processes. In course of our research we discover
intimate connections with already known concepts of the stochastic
processes theory.

Firstly, for semimartingales we proved that $\text{TV}^{c}$ is of
order $c^{-1}$ and the normalized truncated variation converges almost
surely to the quadratic variation of the semimartingale as $c\searrow0$.
Secondly, we studied the rate of this convergence. As this task was
much more demanding we narrowed to the class of diffusions (with some
mild additional assumptions). We obtained the weak convergence to
a so-called Ocone martingale. These results can be viewed as some
kind of law of large numbers and the corresponding central limit
theorem. 

Finally, for a Brownian motion with a drift we proved the behavior
of $\text{TV}^{c}$ on intervals going to infinity. Again, we obtained
a LLN and CLT, though in this case they have a different interpretation
and were easier to prove. 

All the results above were obtained in a functional setting, viz.
we worked with processes describing the growth of the truncated variation
in time. Moreover, in the same respect we also treated two closely
related quantities - the so-called upward truncated variation and
downward truncated variation. 
\end{abstract}

\keywords{Stochastic processes, semimartingales, diffusions, truncated variation,
total variation. }

\maketitle

\section{Introduction and results}

Recently, the following notion of \emph{the truncated variation }has
been introduced in \cite{Lochowski:2008}:

\begin{equation}
\TTV f{\ab}c:=\sup_{n}\sup_{a\leq t_{1}<t_{2}<\ldots<t_{n}\leq b}\sum_{i=1}^{n-1}\phi_{c}\left(\left|f(t_{i+1})-f(t_{i})\right|\right),\label{eq:TVDefinition}
\end{equation}
where $\phi_{c}\left(x\right)=\max\left\{ x-c,0\right\} ,\: c\geq0$
and $f:\ab\mapsto\R$ is a c\`{a}dl\`{a}g function. The trivial observation
is that $\text{TV}^{0}$ is nothing else that the total variation
(which will be also denoted by $\text{TV}$). The introduction of
the truncation parameter $c$ makes it possible to circumvent a classical
problem of stochastic analysis; namely, that the total variation of
the Brownian motion as well as of a `non-trivial` diffusion process
is almost surely infinite. This alone makes $\text{TV}^{c}$ an interesting
research object. Other properties of $\text{TV}^{c}$ were found,
amongst which the variational characterization of the truncated variation
given by
\begin{equation}
\TTV f{\ab}c=\inf\cbr{\TTV g{\ab}{}:g\,\text{such that }\norm{g-f}{\infty}{}\leq\frac{1}{2}c},\label{eq:variational}
\end{equation}
where $\norm g{\infty}{}:=\sup\cbr{|g(x)|:x\in\ab}$. In other words,
truncated variation is the lower bound for the total variation of
functions approximating $f$ with accuracy $c$. It appears that the
$\inf$ in the above expression is attained at some function $g^{c}$.
The properties just listed give hope that $\text{TV}^{c}$ could be
used in the stochastic analysis. This question is a active field of
research, some promising results are contained in \cite{ochowski:2011rs},
like definition of a stochastic integral with respect to a semimartingale
as a limit of the pathwise Riemann-Stieltjes stochastic integrals,
and other are being investigated. A detailed description would be
too vast for our introduction therefore we refer the reader to \cite{ochowski:2011rs} and \cite{ochowski:2011lr}, and its debriefing in \secref{PropertiesTV}. 

Having agreed that $\text{TV}^{c}$ might be a useful tool, an important
task is to describe the behavior of $\text{TV}^{c}$ for a vast class
of stochastic processes. This is the main aim of this paper. We will
derive first order properties for \emph{continuous semimartingales
}and second order properties for\emph{ continuous diffusions }(under
some mild technical assumptions) when $c\searrow0$. Intuitively,
these answer the question of how fast $\text{TV}^{c}$ converges to
the total variation, that is how fast it diverges to infinity. In
the case of the Brownian motion with drift we will also study the
behavior of $\text{TV}^{c}$ on large time intervals. 

Before presenting our results we define two concepts closely related
to $\text{TV}^{c}$. The upward truncated variation given by
\begin{equation}
\UTV f{\ab}c:=\sup_{n}\sup_{a\leq t_{1}<s_{1}<t_{2}<s_{2}<...<t_{n}<s_{n}\leq b}\sum_{i=1}^{n}\phi_{c}\left(f(s_{i})-f(t_{i})\right),\label{eq:defUTV}
\end{equation}
and the downward truncated variation given by 
\[
\DTV f{\ab}c:=\sup_{n}\sup_{a\leq t_{1}<s_{1}<t_{2}<s_{2}<...<t_{n}<s_{n}\leq b}\sum_{i=1}^{n}\phi_{c}\left(f(t_{i})-f(s_{i})\right).
\]
The relation between $\text{TV}^{c},\text{UTV}^{c},\text{DTV}^{c}$
will become clear in Section \ref{sub:Joint-structure-of}. Given
a c\`{a}dl\`{a}g process $\cbr{X_{t}}_{t\ge0}$ we define the following families
of processes $\cbr{\TTV Xtc}_{t\geq0}$, $\cbr{\UTV Xtc}_{t\geq0}$
and $\cbr{\DTV Xtc}_{t\geq0}$ by 
\[
\TTV Xtc:=\TTV X{[0;t]}c,\quad\UTV Xtc:=\UTV X{[0;t]}c,\quad\DTV Xtc:=\DTV X{[0;t]}c,
\]
where all the above definitions are understood in a pathwise fashion.
Obviously, all three processes are increasing. Moreover, for semimartingales
and $c\searrow0$, under weak non-degeneracy conditions, their values
diverge up to infinity. Thus a natural question arises what the growth
rate of the (upward, downward) truncated variation is. Under a proper
normalization we expect also some convergence to a non-trivial object.
These questions are answered in the following section.

\subsection{Behavior as $c\searrow0$. First order properties for continuous
semimartingales}

For a continuous semimartingale $\cbr X_{t\in[0;T]}$ we will denote
its decomposition by 
\[
X_{t}:=X_{0}+M_{t}+A_{t},\quad t\in[0;T],
\]
where $M$ is a continuous local martingale such that $M_{0}=0$ and
$A$ is a continuous finite variation process such that $A_{0}=0.$
Given $T>0$, by $\mathcal{C}([0;T],\R)$ we denote the usual space
of continuous functions on $[0;T]$ endowed with the topology given
by norm $\norm{\cdot}{\infty}{}$.
\begin{thm}
\label{thm:LLN} Let $T>0$ and let $\cbr X_{t\in[0;T]}$ be a continuous
semimartingale as above. We have 
\[
\lim_{c\searrow0}c\:\TTV Xtc\conv\sb X_{t},\quad\text{a.s.}
\]
\[
\lim_{c\searrow0}c\:\UTV Xtc\conv\sb X_{t}/2,\quad\text{a.s.}
\]
and
\[
\lim_{c\searrow0}c\:\DTV Xtc\conv\sb X_{t}/2,\quad\text{a.s.}
\]
In all cases the converge is understood in the $\mathcal{C}([0;T],\R)$
topology. \end{thm}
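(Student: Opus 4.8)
The plan is to reduce the statement in two elementary steps to a law of large numbers for a standard Brownian motion, prove that law, and then upgrade pointwise convergence to convergence in $\mathcal{C}([0;T],\R)$. First I would record the elementary perturbation bound $|\TTV{f+g}{[a;b]}c-\TTV f{[a;b]}c|\le\TTV g{[a;b]}{}$, together with the identical bounds for $\text{UTV}^{c}$ and $\text{DTV}^{c}$. These follow because $\phi_{c}$ is non-decreasing and $1$-Lipschitz, so $\phi_{c}(|x+y|)\le\phi_{c}(|x|)+|y|$, and summing the $|y|$-terms along an ordered partition is dominated by the total variation of $g$. Applying this with $f=M$ and $g=A$ gives $c\,|\TTV Xtc-\TTV Mtc|\le c\,\TTV A{[0;T]}{}\conv0$ uniformly in $t\in[0;T]$, since $A$ has a.s.\ finite total variation; the same holds for the one-sided versions. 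As $\sb X_t=\sb M_t$, it therefore suffices to treat the continuous local martingale $M$.

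By the Dambis--Dubins--Schwarz theorem one has $M_t=B_{\sb M_t}$ for a Brownian motion $B$. The key observation is that $\text{TV}^{c}$, $\text{UTV}^{c}$ and $\text{DTV}^{c}$ are invariant under a continuous monotone time change, since they are suprema over partitions that see only the values taken by the path; hence $\TTV Mtc=\TTV B{[0;\sb M_t]}c$ pathwise, and similarly for the one-sided quantities. As $t$ ranges over $[0;T]$ the time argument $\sb M_t$ stays in the compact interval $[0;\sb M_T]$, so everything reduces to the behaviour of $c\,\TTV B{[0;s]}c$, $c\,\UTV B{[0;s]}c$ and $c\,\DTV B{[0;s]}c$ for Brownian motion on a compact $s$-interval.

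The heart of the argument is then to show that, a.s.\ for every $s\ge0$, $c\,\TTV B{[0;s]}c\conv s$ while $c\,\UTV B{[0;s]}c\conv s/2$ and $c\,\DTV B{[0;s]}c\conv s/2$. Here I would invoke the characterization from Section \ref{sub:Joint-structure-of} expressing $\text{TV}^{c}$ through the number $N^{c}_{s}$ of completed $c$-oscillations of the path, so that $c\,\TTV B{[0;s]}c=c^{2}N^{c}_{s}+o(1)$, together with $\TTV B{[0;s]}c=\UTV B{[0;s]}c+\DTV B{[0;s]}c$ up to a boundary term that is negligible after multiplication by $c$. The oscillation times form a renewal sequence built from i.i.d.\ hitting-time increments; a second-moment estimate combined with Borel--Cantelli along $c=2^{-k}$, interpolated using monotonicity in $c$, yields $c^{2}N^{c}_{s}\conv s$ almost surely, the exact constant being forced by the explicit law of the first $c$-oscillation time (which scales as $c^{2}$). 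The two one-sided limits are equal by the symmetry $B\stackrel{d}{=}-B$, and since they add up to the two-sided limit each must equal $s/2$. I expect this step --- pinning down the exact constants and converting the linear growth of the mean into almost sure convergence despite only approximate additivity of the functionals --- to be the main obstacle.

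Finally I would upgrade the convergence to the $\mathcal{C}([0;T],\R)$ topology. Each map $t\mapsto c\,\TTV Xtc$ is non-decreasing, the limit $t\mapsto\sb X_t$ is continuous and non-decreasing, and the pointwise a.s.\ convergence just established holds simultaneously on the countable dense set of rational times; a Dini/P\'olya-type argument then promotes this to uniform convergence on $[0;T]$. Transporting back through the continuous time change $s=\sb M_t\in[0;\sb M_T]$ preserves uniformity, which completes the argument for all three processes.
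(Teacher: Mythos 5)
Your overall architecture coincides with the paper's: strip the finite-variation part with the Lipschitz bound (\ref{eq:lipschitzcondition}), reduce the martingale part to Brownian motion by Dambis--Dubins--Schwarz together with time-change invariance of the truncated variations, prove the Brownian law of large numbers from the oscillation/renewal structure of Section \ref{sub:Joint-structure-of}, and upgrade pointwise convergence of monotone processes to uniform convergence. (Your order of reductions --- removing $A$ before the time change --- is tidier than the paper's, which carries $A_{\alpha_t}$ through the time change and perturbs by $\epsilon\beta$ to make $\sb{X}$ strictly increasing; your route instead needs the time-change identity for the merely non-decreasing continuous surjection $t\mapsto\sb{M}_t$, which is true because $\text{TV}^{c}$ depends only on the ordered range of the path, but is stronger than the stated property (\ref{timechange}) and should be verified.) The central step, however, has a real gap: the identity $c\,\TTV B{[0;s]}c=c^{2}N^{c}_{s}+o(1)$ does not follow from Theorem \ref{thm:JointStructure}. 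That theorem writes $\TTV B{[0;s]}c$ as a sum of \emph{random} amplitudes $M_{i}^{c}-m_{i}^{c}-c$ and $M_{i}^{c}-m_{i+1}^{c}-c$ over the completed oscillations, each with mean of order $c$ but with fluctuations of the same order, so replacing the sum by $c$ times a count is itself a law of large numbers for a $c$-indexed triangular array of i.i.d.\ summands. Your second-moment/Borel--Cantelli scheme is aimed only at the renewal counts $N^{c}_{s}$; you must run the same scheme for the centred partial sums of the amplitudes as well (the paper does this via Doob's maximal inequality applied to the martingale $c\sum_{i}(Z_{i}(c)-\ev{Z_{i}(c)})$, using the explicit moments (\ref{eq:expectanceD}) and (\ref{eq:expectanceZ})). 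This is repairable with the tools you already invoke, but as written the harder half of the work is hidden in the ``$+o(1)$''.

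The deduction of the $\text{UTV}^{c}$ and $\text{DTV}^{c}$ limits is also not valid as stated: $B\stackrel{d}{=}-B$ gives only equality in \emph{law} of $\UTV B{[0;s]}c$ and $\DTV B{[0;s]}c$, and combined with the almost sure convergence of their sum this does not force either one to converge almost surely --- their difference could be a symmetric quantity that oscillates as $c\searrow0$. You need either to rerun the renewal argument for $\text{UTV}^{c}$ separately, or, more cheaply, to use the pathwise identity coming from (\ref{eq:solution2}), (\ref{eq:approximationBound}) and (\ref{eq:TVisUTVDTV}): there is a process $R_{c}$ with $\norm{R_{c}}{\infty}{}\leq c$ such that
\[
\UTV Xtc=\frac{1}{2}\rbr{\TTV Xtc+X_{t}-X_{0}-R_{c}(t)},
\]
which transfers the $\text{TV}^{c}$ limit to both one-sided variations instantly and uniformly in $t$. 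This is exactly how the paper concludes for $\text{UTV}^{c}$ and $\text{DTV}^{c}$.
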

\begin{rem}
One can see that $\text{TV}^{c}$ is of order $c^{-1}$. Hence by
the discussion above this is also the lower bound of the total variation
of the approximation of $X$ in $\norm{\cdot}{\infty}{}$-ball of
radius $c/2$. For diffusions we will find finer estimates in the
next section.

Assumptions of \thmref{LLN} could be weakened slightly. Without additional
effort we can prove the theorem for $A$ not being necessary continuous.
This is however cumbersome from notational point of view, as we cannot
work in $\mathcal{C}([0;T],\R)$ space. The problem of non-continuous
semimartingales will be treated in full extent in future papers.
\end{rem}

\begin{rem}
\thmref{LLN} could be considered as some kind of a law of large numbers.
We will now provide a rough justification using the Wiener process
$W$ as an example. One can imagine splitting an interval $[0;1]$
into $c^{-2}$ parts. On each part $W$ performs a motion of order
$c$. The contribution of the part to the total truncated variations
is not negligible and is of order $c$. The contributions are random
and {}``almost'' independent for non-neighboring parts. Therefore
there is no randomness in the limit. 
\end{rem}

\begin{rem}
\label{rem:LLNProof}The heuristics presented in the previous remark
is nice at the intuitive level however a more precise description
is required to perform the proof. In the case of a Wiener process
with drift this will be a precise characterization of $t_{1},t_{2},\ldots$
for which the sup in definition (\ref{eq:TVDefinition}) is attained,
which will lead to a natural renewal structure . In the case of a
general semimartingale following the same path seems to be hopeless.
To circumvent the problem we employed an abstract approach based of
time change techniques in spirit of the Dambis, Dubins-Schwarz theorem
\cite[Chapt. V, Theorem 1.6]{Revuz:1991kx}. 
\end{rem}
Having explained {}``the law of large numbers nature'' of the above
result a natural question arises about the corresponding central limit
theorem. This will be addressed in the next section for $\cbr{X_{t}}_{t\ge0}$
being a diffusion satisfying some mild conditions.

\subsection{Behavior as $c\searrow0$. Second order properties for diffusions}

Let us now consider a general diffusion defined with equation 
\begin{equation}
\dd X_{t}=\sigma(X_{t})\dd W_{t}+\mu(X_{t})\dd t,\quad X_{0}=0,\label{eq:diffusionDef}
\end{equation}
We will always assume that $\sigma,\mu$ are Lipschitz functions and
$\sigma>0$. It is well known, \cite[Sect. IX.2]{Revuz:1991kx},
that under these conditions the equation admits a unique strong solution.
The main result of this section is 
\begin{thm}
\label{thm:CLT}Let $T>0$ then

\begin{multline}
\rbr{X,\UTV Xtc-\frac{1}{2}\rbr{\frac{\sb X_{t}}{c}+X_{t}},\DTV Xtc-\frac{1}{2}\rbr{\frac{\sb X_{t}}{c}-X_{t}},\TTV Xtc-\frac{\sb X_{t}}{c}}\\
\conv^{d}(X,\tilde{M}_{t},\tilde{M}_{t},2\tilde{M}_{t}),\text{ as }c\searrow0,\label{eq:assertion-2}
\end{multline}
where $\tilde{M}$ is given by the change time formula: 
\begin{equation}
\tilde{M}_{t}:=12^{-1/2}B_{\sb X_{t}},\label{eq:limit}
\end{equation}
where $B$ is a standard Brownian motion such that $B$ and $X$ are
independent. The convergence is understood as the weak convergence
in $\mathcal{C}([0;T],\R)^{4}$ topology.\end{thm}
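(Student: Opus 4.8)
The plan is to reduce the four-dimensional statement to a single one-dimensional functional central limit theorem for the fluctuation of $\TTV Xtc$ around $\sb X_t/c$, and then to prove the latter through an explicit martingale representation. The reduction rests on the structural relations between the three variations established in Section~\ref{sub:Joint-structure-of}: up to boundary terms of order $c$ one has $\TTV Xtc=\UTV Xtc+\DTV Xtc$ and $\UTV Xtc-\DTV Xtc=X_{t}$. Solving these for $\UTV Xtc$ and $\DTV Xtc$ and subtracting the prescribed centrings gives, uniformly on $[0;T]$ in probability,
\[
\UTV Xtc-\tfrac12\rbr{\tfrac{\sb X_t}{c}+X_{t}}=\DTV Xtc-\tfrac12\rbr{\tfrac{\sb X_t}{c}-X_{t}}=\tfrac12\rbr{\TTV Xtc-\tfrac{\sb X_t}{c}}+o(1).
\]
Thus the three centred processes are asymptotically the multiples $\tfrac12,\tfrac12,1$ of the \emph{single} process $\TTV Xtc-\sb X_t/c$, which explains the limit shape $(\tilde M,\tilde M,2\tilde M)$; the first coordinate $X$ requires nothing. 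The theorem is therefore reduced to the joint convergence of $\rbr{X,\TTV Xtc-\sb X_t/c}$ with limit $(X,2\tilde M)$.

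Next I would use the taut-string description of the extremal function $g^{c}$ from \eqref{variational}: $g^{c}$ is the Skorokhod double reflection of $X$ inside a band of width $c$, so that $R^{c}:=X-g^{c}$ stays in $\sbr{-c/2;c/2}$ and $\TTV Xtc=L^{+}_{t}+L^{-}_{t}$, where $L^{\pm}$ are the nondecreasing regulators that push at the two boundaries. Applying Itô's formula to $\rbr{R^{c}}^{2}$, using $\dd R^{c}=\dd X-\dd L^{+}+\dd L^{-}$, $\dd\sb X_t=\sigma^{2}(X_{t})\dd t$, and the fact that $L^{\pm}$ increases only when $R^{c}$ sits at the corresponding edge, all cross terms with the regulators collapse to the constant $\mp c/2$ and one obtains the exact identity $c\,\TTV Xtc=\sb X_t+\rbr{R^{c}_{0}}^{2}-\rbr{R^{c}_{t}}^{2}+2\intc t R^{c}_{s}\,\dd X_{s}$. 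Dividing by $c$ and splitting $\dd X=\sigma(X)\dd W+\mu(X)\dd t$ yields
\[
\TTV Xtc-\frac{\sb X_t}{c}=\frac{\rbr{R^{c}_{0}}^{2}-\rbr{R^{c}_{t}}^{2}}{c}+\frac{2}{c}\intc t R^{c}_{s}\sigma(X_{s})\,\dd W_{s}+\frac{2}{c}\intc t R^{c}_{s}\mu(X_{s})\,\dd s,
\]
where the boundary term is $O(c)$ uniformly and hence negligible.

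The heart of the argument is then a two-scale averaging (homogenization) estimate exploiting the fast equilibration of the reflected coordinate. On its intrinsic time scale $c^{2}$ the process $R^{c}$ relaxes to its stationary law, the uniform distribution on $\sbr{-c/2;c/2}$, which has mean $0$ and second moment $c^{2}/12$, and $R^{c}$ decorrelates from the slowly varying quantities $\sigma(X_{s}),\mu(X_{s})$. The vanishing mean forces the drift integral $\tfrac{2}{c}\intc t R^{c}_{s}\mu(X_{s})\,\dd s\conv0$ uniformly in probability, so the drift contributes nothing to the fluctuation, having already been absorbed into the $X_{t}$ centrings of the upward and downward variations. For the martingale term the second moment $c^{2}/12$ produces the quadratic-variation limit
\[
\frac{4}{c^{2}}\intc t \rbr{R^{c}_{s}}^{2}\sigma^{2}(X_{s})\,\dd s\conv\frac{1}{3}\intc t\sigma^{2}(X_{s})\,\dd s=\frac{\sb X_t}{3}
\]
uniformly on $[0;T]$ in probability. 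This pins down the constant: $4\cdot\tfrac1{12}=\tfrac13$, matching $\var(2\tilde M_{t})=\tfrac13\sb X_t$, with the factor $12^{-1/2}$ in \eqref{limit} being exactly $\rbr{\int_{-1/2}^{1/2}x^{2}\,\dd x}^{1/2}$.

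Finally, the functional central limit theorem for continuous local martingales \cite{Revuz:1991kx}, applied to $\tfrac{2}{c}\intc t R^{c}_{s}\sigma(X_{s})\,\dd W_{s}$, gives convergence in $\crt$ to a continuous Gaussian martingale with quadratic variation $\tfrac13\sb X_t$, that is to $\tfrac{1}{\sqrt3}B_{\sb X_t}=2\tilde M_{t}$; the Lindeberg condition is automatic by path continuity and tightness is supplied by the same theorem. Upgrading this to stable convergence, together with the asymptotic orthogonality $\tfrac1c\intc t R^{c}_{s}\sigma(X_{s})\,\dd\sb{W,N}_{s}\conv0$ for bounded martingales $N$ adapted to $X$, identifies the limiting $B$ as a Brownian motion independent of $X$ and delivers the joint convergence of $\rbr{X,\cdot}$; combined with the first-step reduction this gives \eqref{assertion-2}. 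I expect the decisive difficulty to lie in this averaging layer: proving the uniform-in-$t$ quadratic-variation limit and the drift cancellation even though $R^{c}$ and $X$ are driven by the same Brownian motion, and establishing the independence of $B$ from $X$ via stable convergence. This is precisely where the renewal/time-change heuristics of Remark~\ref{rem:LLNProof} and the Dambis--Dubins--Schwarz reduction $M_{t}=\beta_{\sb X_t}$, which realises the $\sb X$-clock governing $\tilde M$, carry out the quantitative work.
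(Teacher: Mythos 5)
Your reduction of the four-dimensional statement to the single process $\TTV Xtc-\sb X_{t}/c$ is exactly the paper's own first step (it is \eqref{utvfromTV}, obtained from \eqref{solution2}, \eqref{approximationBound} and \eqref{TVisUTVDTV}), but from there you diverge completely. The paper proves the core convergence by a three-stage bootstrap: an explicit renewal analysis of the Brownian-with-drift case (Laplace transforms of the inter-record quantities, fed into the Anscombe-type Fact \ref{fact:AnscombeLike}), then a discretization/Prokhorov-metric argument for $\sigma=\mathrm{const}$, then a time change plus a R\'{e}nyi-mixing measure-theoretic argument (Lemma \ref{lem:measureTh}) to pass to general $\sigma$ and to extract the independence of $B$ from $X$. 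Your route --- identify $g^{c}$ with the two-sided Skorokhod reflection of $X$ in a band of width $c$, apply It\^{o}'s formula to $\rbr{X-g^{c}}^{2}$ to obtain the exact identity $c\,\TTV Xtc=\sb X_{t}+\rbr{R^{c}_{0}}^{2}-\rbr{R^{c}_{t}}^{2}+2\intc{t}R^{c}_{s}\,\dd X_{s}$, and then average out the fast reflected coordinate --- is genuinely different, treats all $\sigma$ at once, and obtains the independence more transparently via the stable martingale CLT and asymptotic orthogonality. The identity itself is correct: the regulator cross terms do collapse to $\mp c/2$ because $L^{\pm}$ charge only the edges, and $\UTV Xtc,\DTV Xtc$ are precisely these regulators by Theorem \ref{thm:JointStructure} and \eqref{solution2}.

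However, as written the proposal stops exactly where the work begins, and one assertion is inaccurate. The stationary law of the doubly reflected diffusion $R^{c}$ is uniform on $\sbr{-c/2;c/2}$ only when the drift vanishes; with drift it is the exponentially tilted density proportional to $e^{2\mu x/\sigma^{2}}$, whose mean is of order $\mu c^{2}/\sigma^{2}$ rather than $0$. This does not break the argument --- after multiplication by $2/c$ the drift integral is still $O(c)$, and the second moment is still $c^{2}/12+O(c^{3})$ --- but the stated ``mean zero'' is what makes the drift term look trivially negligible, whereas the true cancellation must be quantified. More importantly, the uniform-in-$t$ convergence in probability of $\frac{4}{c^{2}}\intc{t}\rbr{R^{c}_{s}}^{2}\sigma^{2}(X_{s})\,\dd s$ to $\frac{1}{3}\sb X_{t}$, and of $\frac{2}{c}\intc{t}R^{c}_{s}h_{s}\,\dd s$ to $0$ for the relevant integrands $h$, is the entire content of the theorem in this formulation; $R^{c}$ is driven by the same $W$ as $X$, so this requires a genuine two-scale estimate (e.g.\ blockwise comparison with a frozen-coefficient reflected Brownian motion on windows of length $c^{2-\delta}$, with errors controlled uniformly in $t$). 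You name this difficulty but supply no argument for it, so what you have is a credible and arguably cleaner \emph{strategy} rather than a proof.
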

\begin{rem}
Let us notice that by \cite[Proposition 5.33]{Jacod:2003} from the joint convergence of $X$ and three other processes related to $\text{UTV}, \text{DTV}$ and $\text{TV}$ one obtains their stable convergence as described in \cite[Sect. VIII.5]{Jacod:2003}.
\end{rem}
\begin{rem}
Let us now present an intuitive explanation of the result on the example
of a Wiener process with drift, $W$ and the truncated variation.
\thmref{LLN} reads as
\[
c\:\TTV Wtc\conv t,\quad\text{a.s.}
\]
and by \thmref{CLT} and the fact that $\sb W_{t}=t$ we obtain
\[
\TTV Xtc-\frac{t}{c}\conv^{d}3^{-1/2}B_t.
\]
In this case the theorems are indeed an {}``almost classical'' law
of large numbers and central limit theorem. This stems from the fact
that $\text{TV}^{c}$ in this case has a particularly nice, renewal
structure. 

On the intuitive level, by \eqref{variational} one may say that for
any path of $W$ on interval $\ab$, minimal ''vertical'' length
of graph of any random function $f:\left[a;b\right]\rightarrow\mathbb{R},$
uniformly close to this path must be at least equal to 
\[
\frac{b-a}{c}+\sqrt{\frac{b-a}{3}}R_{c},
\]
where $c=2\sup_{t\in\left[a;b\right]}\left|f\left(t\right)-W_{t}\right|,$
and $R_{c}$ is a random variable such that it tends in distribution
to a standard normal distribution $\mathcal{N}\left(0,1\right)$ as
$c\searrow0.$ Note that for small $c$'s this lower bound is almost
deterministic.
\end{rem}

\begin{rem}
It is easy to check that $\sb {\tilde{M}}=\sb X$. Let $M$ be the local
martingale in the semimartingale decomposition of $X$. It is natural
to ask how the laws of $M$ and $\tilde{M}$ are related. The martingale
of the form given by \eqref{limit} were introduced in \cite{Ocone:1993fk}
and are called Ocone martingales. By results of \cite{Vostrikova:2000kx}
it follows that $M$ is an Ocone martingale only if $\sigma=const$
(i.e. $X$ is a Brownian motion with some stochastic drift).

Let us also notice that $\sigma=const$ is also the only case when
$\sb X_{t}$ is a deterministic process. 

Ocone martingales have particularly simple structure which sometimes
makes it easy to draw conclusion about them. As an example we consider
a situation when $\sigma\leq C$. Then 
\[
\pr{\sup_{t\in[0;T]}\tilde{M}_{t}\geq a}\leq\pr{\sup_{t\in[0;CT]}12^{-1/2}B_{t}>a}=\pr{\sup_{t\in[0;T]}B_{t}>(12/C)^{1/2}a},
\]
hence $\tilde{M}$ has a Gaussian concentration. Further properties and references can be found in \cite{Vostrikova:2000kx}. 
\end{rem}

\begin{rem}
The assumption $\sigma>0$ is equivalent to $\sigma\neq0$. This follows
by the fact that $\sigma$ is continuous so, under the assumption
that $\sigma\neq0$, either $\sigma>0$ for any $x$ or $\sigma<0$.
In the latter case one can simply take $-\sigma$ instead of $\sigma$
and obtain a diffusion with the same law. \\
 The case when $\sigma$ may attain value $0$ requires further studies.
To see this let us consider {}``a very degenerate case'' when $\sigma=0$
on an interval $[x_{0};x_{1}]$ for $x_{0}<x_{1}$. For any $x\in(x_{0};x_{1})$
the diffusion degenerates locally to a deterministic process, a solution
of an ordinary differential equation, with a bounded total variation.
Hence the above formulation of the CLT does not make sense. While
this case was relatively easy, the situation becomes more involved
for border points $x_{0},x_{1}$ or {}``isolated'' $0$'s. We suspect
that in such cases a non-trivial correction term containing the local
time may be required. 
\end{rem}

\begin{rem}
\label{rem:CLTproofOutline}Similarly as in the case of the law of
large numbers (see Remark \ref{rem:LLNProof}) the proof splits into
technically different parts. 

The first one deals with the Wiener process with drift $X_{t}=W_{t}+\mu t$.
We use here the fact that $\text{TV}^{c}\left(X,t\right)$ has a fairly
simple renewal-like structure. Moreover, it is possible to derive
explicit formulas for the Laplace transform of the increments of the
truncated variation. Then a very simple argument allows to treat random
drift, i.e. the case where $\mu$ is a random variable independent
of $W$. 

The second step deals with diffusions with $\sigma=const$. Namely,
on a small interval we have $X_{\Delta t+t}-X_{t}\approx\sigma(W_{\Delta t+t}-W_{t})+\mu(X_{t})\Delta t:=Y_{\Delta t}$
which is essentially a Wiener process with a random drift as above.
It turns out that we may control the quality of the approximation
to conclude the proof using some metric-theoretic tricks and the Prohorov
metric in this case.

As explained in Remark \remref{onlySimpleDiffusions}, this approach
fails in the case of non-constant $\sigma$. Here we appeal to a time
change technique and a R\'{e}nyi mixing-like argument (see e.g. \cite[p. 309]{Silvestrov:2004fk}.
A reader familiar with this kind of reasoning may recognize that this
is why we get the independence in \eqref{limit}.
\end{rem}

\subsection{Large time results}

For the Wiener process with drift it is possible to derive results
for large time. In this section, we put
\[
X:=W_{t}+\mu t,
\]
Firstly, we present 
\begin{fact}
\label{fact:TVnas} Let $T>0$ and $c>0$. We have
\[
\lim_{n\conv+\infty}\TTV X{nt}c/n\conv m_{\mu}^{c}t,\quad\text{a.s.,}
\]
 where the convergence is understood in $\mathcal{C}([0;T],\R)$ topology
and
\begin{equation}
m_{\mu}^{c}=\begin{cases}
\mu\coth(c\mu) & \text{if}\:\mu\neq0,\\
c^{-1} & \text{if}\:\mu=0.
\end{cases}\label{eq:mcmu}
\end{equation}
Analogously we have
\[
\lim_{n\conv+\infty}\UTV X{nt}c/n\conv\frac{1}{2}n_{\mu}^{c}t,\quad\text{a.s.},
\]
and
\[
\lim_{n\conv+\infty}\DTV X{nt}c/n\conv\frac{1}{2}n_{-\mu}^{c}t,\quad\text{a.s.}
\]
where again the convergence is understood in $\mathcal{C}([0;T],\R)$ topology
and
\begin{equation}
n_{\mu}^{c}=\begin{cases}
\mu\coth(c\mu) + \mu & \text{if}\:\mu\neq0,\\
c^{-1} & \text{if}\:\mu=0.
\end{cases}\label{eq:ncmu}
\end{equation}
\end{fact}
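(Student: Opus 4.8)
The plan is to exploit the renewal structure of the truncated variation processes for $X_{t}=W_{t}+\mu t$ that underlies the explicit Laplace transforms of their increments, and to combine it with the strong law of large numbers for renewal--reward sequences. First I would reduce the three assertions to a single one. Reflecting the path, $X\mapsto-X$, interchanges upward and downward moves and turns a Brownian motion with drift $\mu$ into one with drift $-\mu$, so $\DTV X{nt}c=\UTV{-X}{nt}c$ holds pathwise; together with the relation $\TTV X{nt}c=\UTV X{nt}c+\DTV X{nt}c$ for continuous paths recalled in Section~\ref{sub:Joint-structure-of}, it suffices to prove the statement for $\UTV X{nt}c$ with rate $\tfrac{1}{2}n_{\mu}^{c}$. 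The downward statement then follows by replacing $\mu$ with $-\mu$ (note $n_{-\mu}^{c}=\mu\coth(c\mu)-\mu$ by oddness of $\coth$), and the statement for $\TTV{}{}{}$ follows by addition, since $\tfrac{1}{2}n_{\mu}^{c}+\tfrac{1}{2}n_{-\mu}^{c}=\mu\coth(c\mu)=m_{\mu}^{c}$.

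For the upward truncated variation I would set up the renewal epochs. By spatial homogeneity of the increments of $X$ together with the strong Markov property, the successive cycles of the extremal construction underlying $\UTV{}{}{}$ --- each cycle being the passage from a running local minimum up to a new running maximum followed by a drop of size $c$ that re-establishes the next local minimum --- are i.i.d., with i.i.d. durations $\tau_{1},\tau_{2},\ldots$ and i.i.d. rewards $R_{1},R_{2},\ldots$ (the increments accrued by $\UTV X{\cdot}c$ on each cycle). Applying the strong law of large numbers to this renewal--reward sequence yields, for each fixed $s$, that $\UTV Xsc/s\to\ev{R_{1}}/\ev{\tau_{1}}=\tfrac{1}{2}n_{\mu}^{c}$ a.s. as $s\to\infty$; taking $s=nt$ gives the pointwise statement $\UTV X{nt}c/n\to\tfrac{1}{2}n_{\mu}^{c}\,t$ a.s. for each fixed $t$.

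The computation of the ratio $\ev{R_{1}}/\ev{\tau_{1}}$ is the step I expect to cost the most care, and it is where the hyperbolic cotangent appears. Here I would invoke the exit theory of $X$ from intervals of width $c$: the scale function is proportional to $1-e^{-2\mu x}$, and Wald-type identities --- equivalently, differentiating at the transform variable $0$ the explicit Laplace transforms of the increments already available for $X=W+\mu t$ --- give closed forms for $\ev{\tau_{1}}$ and for the expected reward, whose ratio simplifies to $\tfrac{1}{2}\rbr{\mu\coth(c\mu)+\mu}$. One must verify that the cycle has finite mean (this follows from the exponential tails of the exit time from a bounded interval) and that the first, boundary cycle, which starts at $X_{0}=0$ rather than at a genuine local minimum, is asymptotically negligible. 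The degenerate case $\mu=0$ is then obtained either directly or by continuity, using $\lim_{\mu\to0}\mu\coth(c\mu)=c^{-1}$.

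Finally I would upgrade the pointwise a.s. convergence to convergence in $\crt$. On a single almost sure event the convergence $\UTV X{nt}c/n\to\tfrac{1}{2}n_{\mu}^{c}\,t$ holds simultaneously for all rational $t$ (a countable intersection of the events above). Since for each fixed $n$ the map $t\mapsto\UTV X{nt}c/n$ is nondecreasing and the limit $t\mapsto\tfrac{1}{2}n_{\mu}^{c}\,t$ is continuous, a standard P\'olya-type monotonicity argument --- the analogue of Dini's theorem for monotone functions converging to a continuous limit on a dense set --- promotes convergence on the rationals to uniform convergence on $[0;T]$. The same almost sure event serves all three processes through the reduction of the first paragraph, which completes the proof.
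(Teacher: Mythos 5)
Your proposal is correct and follows essentially the same route the paper intends: the paper only sketches this proof, referring to the renewal decomposition of Section \ref{sub:LLNSmallCWiner} built on \thmref{JointStructure}, the moments extracted from the Laplace transforms \eqref{LaplaceTV1}--\eqref{LaplaceTV2} (your ratio $\ev{R_{1}}/\ev{\tau_{1}}=\frac{e^{c\mu}\sinh(c\mu)/\mu}{2\sinh^{2}(c\mu)/\mu^{2}}=\tfrac{1}{2}\rbr{\mu\coth(c\mu)+\mu}$ checks out against \eqref{expectanceD} and \eqref{expectanceZ}), and the monotonicity upgrade to uniform convergence. Your substitution of the classical renewal--reward strong law for the Doob-inequality argument of \lemref{LLNBM} is precisely the simplification available once $c$ is fixed, and your reduction via \eqref{utvDTV} and \eqref{TVisUTVDTV} is the same bookkeeping the paper uses.
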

The quality of the above approximation is studied in 
\begin{thm}
\label{thm:TVTimeRescale}Let $T>0$ and $c>0$. We have 
\[
\frac{\TTV X{nt}c-m_{\mu}^{c}nt}{\sigma_{\mu}^{c}\sqrt{n}}\conv^{d}B_{t},\quad\text{as}\: n\conv+\infty,
\]
 where $\conv^{d}$ is understood as weak convergence in $\mathcal{C}([0;T],\R)$
topology; $m_{\mu}^{c}$ is given by (\ref{eq:mcmu}) and 
\[
\rbr{\sigma_{\mu}^{c}}^{2}=\begin{cases}
\frac{2-2c\mu\coth(c\mu)}{\sinh^{2}(c\mu)}+1 & \text{if}\:\mu\neq0,\\
1/3 & \text{if}\:\mu=0.
\end{cases}
\]

\end{thm}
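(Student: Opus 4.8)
The plan is to exploit the renewal structure of $\TTV X{\cdot}c$ for the Brownian motion with drift alluded to in \remref{CLTproofOutline}, and to deduce the functional CLT from an invariance principle for renewal--reward processes. The limit here is an ordinary Brownian motion with a \emph{linear} clock (in contrast with \thmref{CLT}), reflecting the fact that for $X=W_{t}+\mu t$ the reversals of magnitude $c$ occur at a deterministic asymptotic rate.

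First I would make the renewal structure precise. Following the variational description \eqref{variational}, the truncated variation $\TTV Xtc$ is accumulated through a sequence of alternating upward and downward reversals of magnitude $c$: the process is tracked upward until it retreats by $c$ from its running maximum, then downward until it rebounds by $c$ from its running minimum, and so on. Let $0=S_{0}\leq S_{1}\leq S_{2}\leq\ldots$ denote the epochs that close consecutive full oscillations (an up-reversal paired with the following down-reversal). By the strong Markov property of $X$ and the spatial homogeneity of the Brownian motion with drift, the state at the start of each such cycle is identical in law, so the cycle durations $\tau_{k}=S_{k}-S_{k-1}$ together with the corresponding truncated-variation increments $R_{k}=\TTV X{S_{k}}c-\TTV X{S_{k-1}}c$ form an i.i.d. sequence (the initial transient before $S_{1}$ being negligible). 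Grouping a full up--down oscillation into one cycle is essential, since the drift makes upward and downward phases asymmetric.

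Second, I would write $\TTV X{nt}c$ as a renewal--reward process based on the pairs $\rbr{\tau_{k},R_{k}}$. The strong law for renewal--reward processes reproduces the constant $m_{\mu}^{c}=\ev{R_{1}}/\ev{\tau_{1}}$ of \factref{TVnas} and \eqref{mcmu}, which fixes the centering, while the fluctuations are governed by the centered reward $R_{k}-m_{\mu}^{c}\tau_{k}$. The standard functional CLT for renewal--reward processes then yields
\[
\frac{\TTV X{nt}c-m_{\mu}^{c}nt}{\sqrt{n}}\conv^{d}\sigma_{\mu}^{c}B_{t},\qquad\rbr{\sigma_{\mu}^{c}}^{2}=\frac{\Var{R_{1}-m_{\mu}^{c}\tau_{1}}}{\ev{\tau_{1}}},
\]
in $\crt$. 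To match the closed form in the theorem I would evaluate the first two moments of $\tau_{1}$ and $R_{1}$. As indicated in \remref{CLTproofOutline}, the phase durations are first-passage (drawdown and drawup) times to level $c$ of the reflected Brownian motion with drift $\mp\mu$, whose Laplace transforms are classical and are expressed through $\sinh$, $\cosh$ and $\coth$ of $c\mu$; differentiating these transforms delivers the moments and, after simplification, the announced value of $\rbr{\sigma_{\mu}^{c}}^{2}$, with the specialization $\mu=0$ giving $1/3$ as a consistency check.

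The main obstacle will be upgrading the one-dimensional (or finite-dimensional) renewal CLT to weak convergence in the functional space $\crt$. Two points require care. One must verify enough integrability of the increments --- at least finite second moments of $\tau_{1}$ and $R_{1}$, which follows from the exponential tails of the exit times --- to obtain tightness. More delicate is the control of the boundary term, namely the discrepancy between $\TTV X{nt}c$ and the reward accumulated up to the last renewal epoch preceding $nt$: this overshoot from the partially completed oscillation must be shown to be $o_{P}\rbr{\sqrt{n}}$ \emph{uniformly} in $t\in[0;T]$, so that it does not perturb the limit. This uniform negligibility, combined with the invariance principle for the renewal counting process and a time-change argument relating the renewal clock to $nt$, promotes the pointwise statement to convergence in $\crt$ and completes the proof.
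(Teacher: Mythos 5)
Your proposal is correct and follows essentially the same route as the paper: the paper also decomposes $\TTV X{\cdot}c$ into i.i.d.\ full up--down oscillation cycles delimited by the times $T_{U,i}^{c}$, computes the required moments from the Laplace transforms \eqref{LaplaceTV1}--\eqref{LaplaceTV2}, and applies its Anscombe-type functional CLT for renewal--reward processes (Fact \factref{AnscombeLike}), which in particular absorbs the overshoot and tightness issues you flag. The variance formula you write, $\Var{R_{1}-m_{\mu}^{c}\tau_{1}}/\ev{\tau_{1}}$, is exactly the quantity $\ev{X_{1}(n)^{2}}/\ev{D_{1}(n)}$ verified in the paper's condition (A2).
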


\begin{thm}
\label{thm:UTVimeRescale}Let $T>0$ and $c>0$. We have 
\[
\frac{\UTV X{nt}c-\frac{1}{2}n_{\mu}^{c}nt}{\rho_{\mu}^{c}\sqrt{n}}\conv^{d}B_{t},\quad\text{as}\: n\conv+\infty,
\]
and 
\[
\frac{\DTV X{nt}c-\frac{1}{2}n_{-\mu}^{c}nt}{\rho_{\mu}^{c}\sqrt{n}}\conv^{d}B_{t},\quad\text{as}\: n\conv+\infty,
\]
 where $\conv^{d}$ is understood as weak convergence in $\mathcal{C}([0;T],\R)$
topology; $n_{\mu}^{c}$ is given by (\ref{eq:ncmu}) and 

\[
\rbr{\rho_{\mu}^{c}}^{2}=\begin{cases}
\frac{2\exp(4c\mu)\rbr{\sinh(2c\mu)-2c\mu}}{\rbr{\exp(2c\mu)-1}^{3}} & \text{if}\:\mu\neq0,\\
1/3 & \text{if}\:\mu=0.
\end{cases}
\]
\end{thm}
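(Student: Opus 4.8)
The plan is to exploit the regenerative structure of $\UTV X{\cdot}c$, available because $X=W+\mu t$ has stationary increments and is strongly Markovian, and then to feed it into a functional renewal--reward central limit theorem. This parallels the proof of \thmref{TVTimeRescale} for the total truncated variation, the only change being that the per-cycle reward now records the upward part of the variation.

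First I would make the regeneration explicit through the taut-string description underlying the variational characterisation of the truncated variation: the optimal $g^{c}$ is the lazy tracker which stays inside the tube $[X-c/2,X+c/2]$, moves up glued to the lower wall $X-c/2$, moves down glued to the upper wall $X+c/2$, and is otherwise constant; then $\UTV X{\cdot}c$ is exactly the upward variation of $g^{c}$. I introduce stopping times $0=\sigma_{0}<\sigma_{1}<\sigma_{2}<\dots$ marking the starts of the successive upward pushes of $g^{c}$. Over one cycle $[\sigma_{k-1},\sigma_{k}]$ the path first builds a drawdown of size $c$ below its running maximum (the up-phase, during which $\UTV X{\cdot}c$ increases), and then a drawup of size $c$ above its running minimum (the down-phase, during which $\UTV X{\cdot}c$ stays constant). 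By the strong Markov property and spatial homogeneity the pairs $\rbr{\xi_{k},\eta_{k}}:=\rbr{\sigma_{k}-\sigma_{k-1},\ \UTV X{\sigma_{k}}c-\UTV X{\sigma_{k-1}}c}$ are i.i.d.\ (after discarding a single initial $O_{P}(1)$ transient), so that $\UTV X{\cdot}c$ is the cumulative process attached to this renewal sequence.

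The reward per cycle equals the height of one upward push, namely the running maximum $M_{\tau}$ of $X$ at the first time $\tau$ its drawdown $M-X$ reaches $c$. A scale-function computation for $X$ shows that $M_{\tau}$ is exponentially distributed with rate $2\mu/(e^{2\mu c}-1)$, whence $\ev{\eta_{1}}=(e^{2\mu c}-1)/(2\mu)$; applying optional stopping to the martingale $X_{t}-\mu t$ over the two phases gives $\ev{\xi_{1}}=2\sinh^{2}(\mu c)/\mu^{2}$. In particular $\ev{\eta_{1}}/\ev{\xi_{1}}=\tfrac{1}{2}n_{\mu}^{c}$, in agreement with Fact~\factref{TVnas}. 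Since all the passage times involved have exponential tails, $\xi_{1}$ and $\eta_{1}$ have finite second moments, and the functional renewal--reward CLT (the Donsker-type statement as in \cite[p.~309]{Silvestrov:2004fk}) applies, giving, with $r:=\tfrac{1}{2}n_{\mu}^{c}$,
\[
\frac{\UTV X{nt}c-r\,nt}{\rho_{\mu}^{c}\sqrt{n}}\conv^{d}B_{t}\ \text{ in }\crt,\qquad \rbr{\rho_{\mu}^{c}}^{2}=\frac{\Var{\eta_{1}-r\,\xi_{1}}}{\ev{\xi_{1}}}.
\]

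What remains is to evaluate $\rbr{\rho_{\mu}^{c}}^{2}$ in closed form, which I expect to be the main obstacle. While $\ev{\eta_{1}^{2}}$ is immediate from the exponential law of $M_{\tau}$, the terms $\ev{\xi_{1}^{2}}$ and $\ev{\eta_{1}\xi_{1}}$ need the joint law of the cycle length and the push height, which I would recover from the Laplace transforms of the drawdown and drawup passage times of $X$ (the explicit transforms announced in Remark~\remref{CLTproofOutline}). Inserting these moments and simplifying the resulting hyperbolic expression should collapse $\rbr{\rho_{\mu}^{c}}^{2}$ to the stated formula, the $\mu\to0$ limit returning $1/3$. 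Finally, the downward statement follows by reflection: $\DTV X{\cdot}c=\UTV{-X}{\cdot}c$ and $-X$ is a Wiener process with drift $-\mu$, so the upward result applied to $-X$ delivers the centering $\tfrac{1}{2}n_{-\mu}^{c}\,nt$ and the corresponding normalisation.
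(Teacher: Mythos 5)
Your proposal is correct in substance and follows essentially the same route as the paper: your regeneration times $\sigma_{k}$ are the paper's $T_{U,k}^{c}$, your per-cycle rewards are the increments $M_{i}^{c}-m_{i}^{c}-c$ of Theorem \thmref{JointStructure}, and the renewal--reward functional CLT you invoke is exactly Fact \factref{AnscombeLike} applied with $Z_{i}(n)=n^{-1/2}(M_{i}^{c}-m_{i}^{c}-c)$ and $D_{i}(n)=n^{-1}(T_{U,i}^{c}-T_{U,i-1}^{c})$, the moments being read off the Laplace transforms \eqref{LaplaceTV1}--\eqref{LaplaceTV2}. Your first-moment checks are right: the cycle reward is indeed exponential with rate $2\mu/(e^{2\mu c}-1)$, $\ev{\xi_{1}}=2\sinh^{2}(\mu c)/\mu^{2}$, and their ratio is $\tfrac{1}{2}n_{\mu}^{c}$; deferring the closed-form evaluation of $\Var{\eta_{1}-r\xi_{1}}/\ev{\xi_{1}}$ to the joint transforms is no less complete than the paper's own two-line sketch.

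One step deserves real attention: the reflection $\DTV Xtc=\UTV{-X}tc$ delivers the normalisation $\rho_{-\mu}^{c}$, and this is \emph{not} the $\rho_{\mu}^{c}$ appearing in the statement. Writing the UTV reward as $\tfrac{1}{2}G_{i}+\tfrac{1}{2}H_{i}$ and the DTV reward as $\tfrac{1}{2}G_{i}-\tfrac{1}{2}H_{i}$ and inserting $(a,b)=(\tfrac{1}{2},\tfrac{1}{2})$, respectively $(a,b)=(\tfrac{1}{2},-\tfrac{1}{2})$, into the paper's exact formula for $\ev{X_{i}(c)^{2}}$ gives
\[
\frac{\ev{X_{i}(c)^{2}}}{\ev{D_{i}(c)}}=\frac{e^{\pm c\mu}\rbr{\tfrac{1}{2}\sinh(2c\mu)-c\mu}}{2\sinh^{3}(c\mu)},
\]
with the plus sign for $\text{UTV}^{c}$ (which is the stated $\rbr{\rho_{\mu}^{c}}^{2}$) and the minus sign for $\text{DTV}^{c}$, i.e. $e^{-2c\mu}\rbr{\rho_{\mu}^{c}}^{2}=\rbr{\rho_{-\mu}^{c}}^{2}$. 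So ``the corresponding normalisation'' in your last sentence is $\rho_{-\mu}^{c}$, and for $\mu\neq0$ it differs from $\rho_{\mu}^{c}$ by the factor $e^{-c\mu}$. You should make this explicit: either your argument proves the DTV claim with $\rho_{-\mu}^{c}$ in place of $\rho_{\mu}^{c}$ (which is what the exact second moments support), or you owe an explanation of why the two constants would coincide. Apart from this point the argument is sound.
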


\begin{rem}
Fact \ref{fact:TVnas} could be considered as a kind of law of large
numbers. Indeed, $\text{TV}^{c}$ builds up over time (cf.  Subsection \ref{sub:Joint-structure-of}) and because of the homogeneity of $X$ its truncated variation can be decomposed
into a number of independent increments. These increments are also
square integrable, therefore \thmref{TVTimeRescale} and \thmref{UTVimeRescale}
hold. 

The task of proving analogous facts for more general classes of processes
seems to be elusive at the moment. Firstly, our methods failed in
this case, but the reason seems to lie deeper than that. It is connected
with the fact that the truncated variation depends on the paths in
a rather complicated way, simplifying only when $c\searrow0$. We
suspect that it is possible to prove similar results for ergodic Markov
processes. This however seem a little unsatisfactory as in this case
the convergence stems merely from the fact that on distant intervals
the process itself is nearly independent.
\end{rem}

\begin{rem}
It is possible for the finite dimensional distributions of the normalized truncated variation processes appearing in \thmref{TVTimeRescale} and \thmref{UTVimeRescale} to obtain even stronger results, namely the Berry-Ess\'{e}en-type estimates of the rate of convergence to normal distribution. The straightforward way to obtain such estimates is to use the already mentioned cumulative structure of the truncated variation processes of a Brownian motion with drift and \cite[Theorem 8.2]{Roginsky:1994}. One can check that the appropriate moments exist (see formula \eqref{LaplaceTV1} and observe that inter-renewal times in this case have the same distribution as the exit time of Brownian motion with drift from a strip. Thus we obtain that the difference between the cdf of the multidimensional projection of the limit distribution and the cdf of the finite dimensional distributions of the normalized truncated variation processes in \thmref{TVTimeRescale} and \thmref{UTVimeRescale} is of order $\log(n)/\sqrt{n}.$ We suspect that the results of \thmref{CLT} can be strengthened in a similar way. This will be a subject of further studies.
\end{rem}

Let us now comment on the structure of the paper. In the next section
we gather facts about the truncated variation and discuss potential
application to the theory of stochastic processes. Section \ref{sec:ProofCLT}
is devoted to the proof of \thmref{CLT}. In section \ref{sec:ProofLLN}
we present the proof of \thmref{LLN}. Finally in Section \ref{sec:ProofLargeTimes}
we sketch the proof of the large time results presented just above.

\subsection*{Acknowledgments }

We would like to thank Rados\l{}aw Adamczak for help in the proof
of \lemref{measureTh}. We thank also the anonymous referee for useful comments.

\section{Properties of the truncated variation\label{sec:PropertiesTV}}

This section is based on results of \cite{ochowski:2011lr}. For reader's
convenience we keep much of the notation introduced there.

Arguably the most interesting property of the $\text{TV}^{c}$ was
listed in \eqref{variational}. Another closely related property is
given by 
\begin{equation}
\TTV f{\left[a;b\right]}c=\inf\cbr{\TTV g{\left[a;b\right]}{}:g\,\text{such that }\norm{g-f}{osc}{}\leq c,g\left(a\right)=f\left(a\right)},\label{eq:variational2}
\end{equation}
where$\norm h{osc}{}:=\sup\cbr{|h(x)-h(y)|:x,y\in\ab}$. The infimum
in \eqref{variational2} attained for some $g^{0,c}:\ab\mapsto\R$,
which is unique. Moreover, we also have the following explicit representation:
\begin{equation}
g^{0,c}(s)=f(a) + \UTV f{[a;s]}c-\DTV f{[a;s]}c\label{eq:solution2}
\end{equation}
and
\begin{equation}
\norm{g^{0,c}-f}{\infty}{}\leq c.\label{eq:approximationBound}
\end{equation}
$g^{0,c}$ is also closely related to the solution of the problem
stated in \eqref{variational}. Let us put $\alpha_{0}:=-\inf\cbr{g^{0,c}(s)-f(s):s\in\ab}-\frac{1}{2}\norm{g^{0,c}-f}{osc}{}$.
The function $g^{c}$ for which $\inf$ in \eqref{variational} is
attained is given by
\begin{equation}
g^{c}(s):=\alpha_{0}+g^{0,c}(s).\label{eq:solution1}
\end{equation}
The problem posed by \eqref{variational2} seems a little artificial
at first. Its formulation has however a substantial advantage over
the problem of \eqref{variational} when considered in stochastic
setting. Namely, when working with stochastic processes the solution
given by \eqref{solution2} is adaptable to the same filtration as
the process itself while the solution obtained in \eqref{solution1}
requires some {}``knowledge of future''. We would like also to mention
that condition $\norm{g-f}{osc}{}\leq c$ in (\ref{eq:variational2})
implies that the increments of $f$ are uniformly approximated by
the increments of $g^{0,c}$ with accuracy $c$. This property might
be useful for applications to numerical stochastic integration. 

To give the reader some intuition about the functions introduced above
we rephrase \cite[Remark 2.4]{ochowski:2011lr}: {}``$g^{c}$ is
the most lazy function possible, which changes its value only if it
is necessary to stay in the tube defined by $\norm{g^{c}-f}{\infty}{}\leq c/2$''.
This can be seen on the following picture

\begin{figure}[H]
\caption{An example of function (in red) and $g^{c}$ (in various colors).}

\includegraphics[scale=0.45]{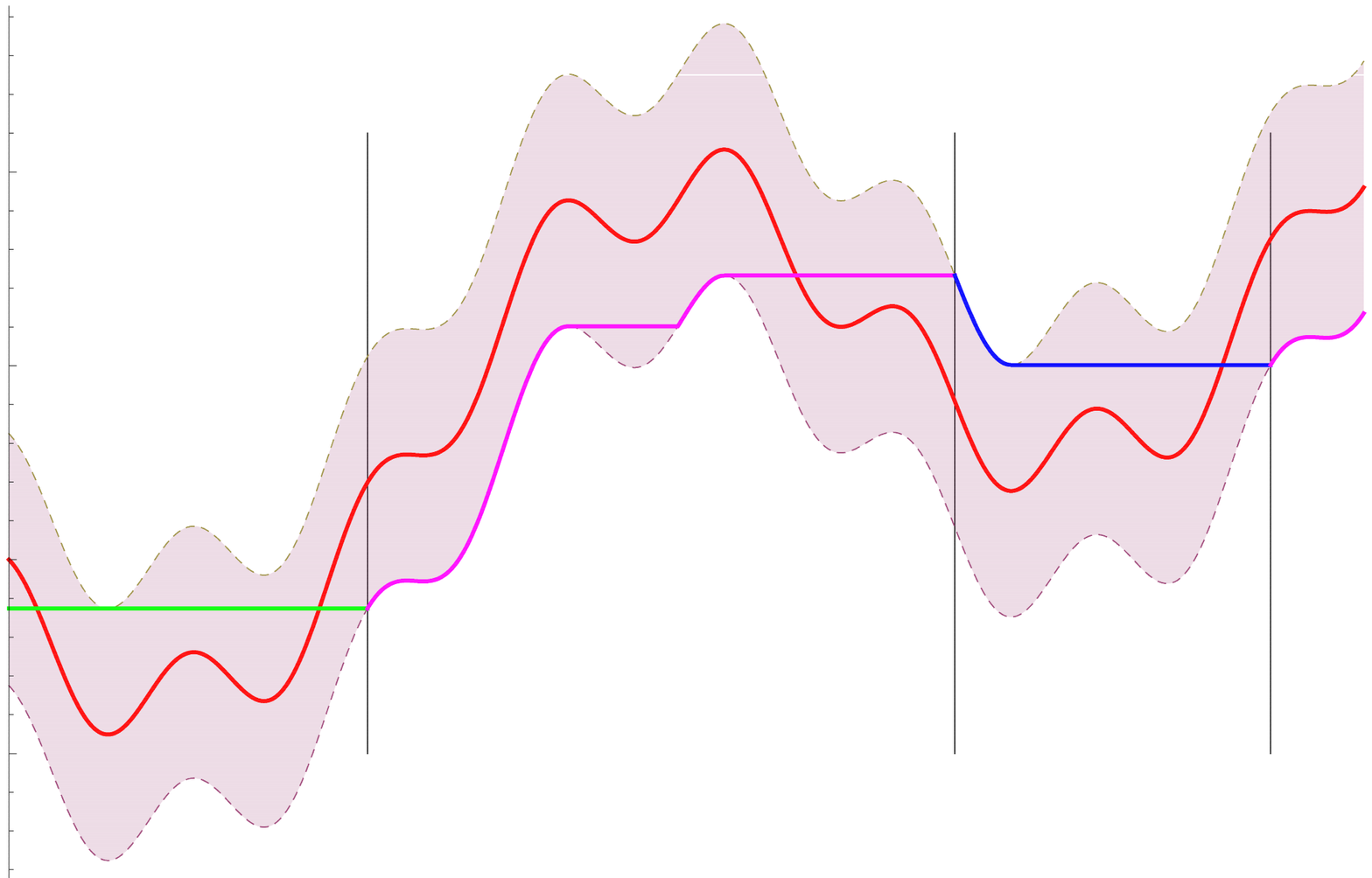}
\end{figure}

We hope that we convince the reader that the truncated variation is
an interesting research object. Moreover, we hope that it will be
useful both in the theory of stochastic processes and in their applications.
The first step towards this goal were undertaken in \cite{ochowski:2011fk}
and \cite{ochowski:2011rs} e.g. in \cite{ochowski:2011fk} was calculated
the Laplace transform of $\text{UTV}^{c}$ and $\text{DTV}^{c}$ for
a Brownian motion with drift and in \cite{ochowski:2011rs}
are presented possible applications to the approximation of stochastic
processes and stochastic integration. 

We plan to report shortly on further findings.

\subsection{Joint structure of $\text{TV}^{c},\text{UTV}^{c}$ and $\text{\text{DTV}}^{c}$.
\label{sub:Joint-structure-of}}

We will now describe the structure of $\text{TV}^{c},\text{UTV}^{c}$
and $\text{\text{DTV}}^{c}$. The construction is described in more
details in \cite[Section 2]{ochowski:2011lr}. Let $-\infty<a<b<+\infty$
and let $f:\left[a;b\right]\rightarrow\mathbb{R}$ be a c\`{a}dl\`{a}g function.
For $c>0$ let us assume that 
\begin{equation}
T_{U}^{c}f:=\inf\left\{ s\geq a:\sup_{t\in\left[a;s\right]}f\left(t\right)-f\left(s\right)\geq c\right\} \leq T_{D}^{c}f:=\inf\left\{ s\geq a:f\left(s\right)-\inf_{t\in\left[a;s\right]}f\left(t\right)\geq c\right\} .\label{eq:firstUpAssumption}
\end{equation}
i.e. the first upward jump of function $f$ of size $c$ appears before
the first downward jump of the same size $c$ or both times are infinite,
i.e. there is no upward or downward jump of size $c.$ Note that when
this condition fails one may simply consider function $-f.$ Now we
define sequences $\left(T_{U,k}^{c}\right)_{k=0}^{\infty},\left(T_{D,k}^{c}\right)_{k=-1}^{\infty},$
in the following way: $T_{D,-1}^{c}=a,$ $T_{U,0}^{c}=T_{U}^{c}f$
and for $k\geq0$:
\[
T_{D,k}^{c}:=\inf\left\{ s\geq T_{U,k}^{c}:\sup_{t\in\left[T_{U,k}^{c};s\right]}f\left(t\right)-f\left(s\right)\geq c\right\} ,
\]
\[
T_{U,k+1}^{c}=\inf\left\{ s\geq T_{D,k}^{c}:f\left(s\right)-\inf_{t\in\left[T_{D,k}^{c};s\right]}f\left(t\right)\geq c\right\} .
\]
Next let us define two sequences of non-decreasing functions $m_{k}^{c}:\left[T_{D,k-1}^{c};T_{U,k}^{c}\right)\rightarrow\mathbb{R}$
and $M_{k}^{c}:\left[T_{U,k}^{c};T_{D,k}^{c}\right)\rightarrow\mathbb{R}$
for $k\geq0$ such that $T_{D,k-1}^{c}<\infty$ and $T_{U,k}^{c}<\infty$
respectively, with the formulas 
\[
m_{k}^{c}\left(s\right):=\inf_{t\in\left[T_{D,k-1}^{c};s\right]}f\left(t\right),\quad M_{k}^{c}\left(s\right)=\sup_{t\in\left[T_{U,k}^{c};s\right]}f\left(t\right),
\]
Similarly, let us define two finite sequences of real numbers $\cbr{m_{k}^{c}}$
and $\cbr{M_{k}^{c}}$, for such $k$'s that $T_{D,k-1}^{c}<\infty$
and $T_{U,k}^{c}<\infty$ by
\begin{equation}
m_{k}^{c}:=m_{k}^{c}\left(T_{U,k}^{c}-\right)=\inf_{t\in\left[T_{D,k-1}^{c};T_{U,k}^{c}\right)}f\left(t\right),\label{eq:defmi}
\end{equation}
\begin{equation}
M_{k}^{c}:=M_{k}^{c}\left(T_{D,k}^{c}-\right)=\sup_{t\in\left[T_{U,k}^{c};T_{D,k}^{c}\right)}f\left(t\right).\label{eq:defMi}
\end{equation}
The above definitions are simple however may be hard to read without
pictures. We hope the following will be helpful. Note that we present
the same function as in the previous example

\begin{figure}[H]
\caption{Example of definition of $T_{U,k}^{c},T_{D,k}^{c}$ and $M_{k}^{c},m_{k}^{c}$.}
\begin{tabular}{cc}
\includegraphics[scale=0.3]{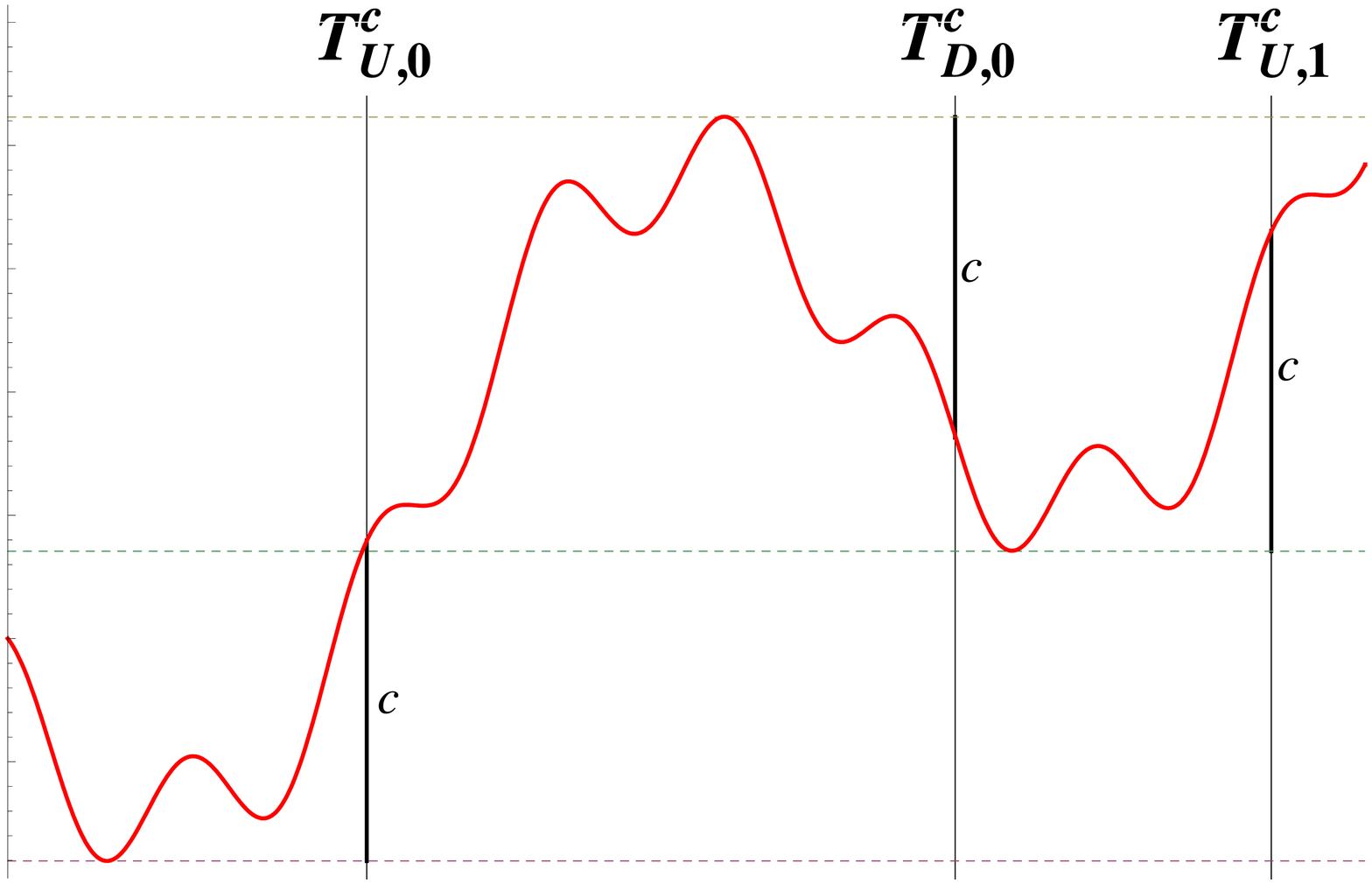} & \includegraphics[scale=0.3]{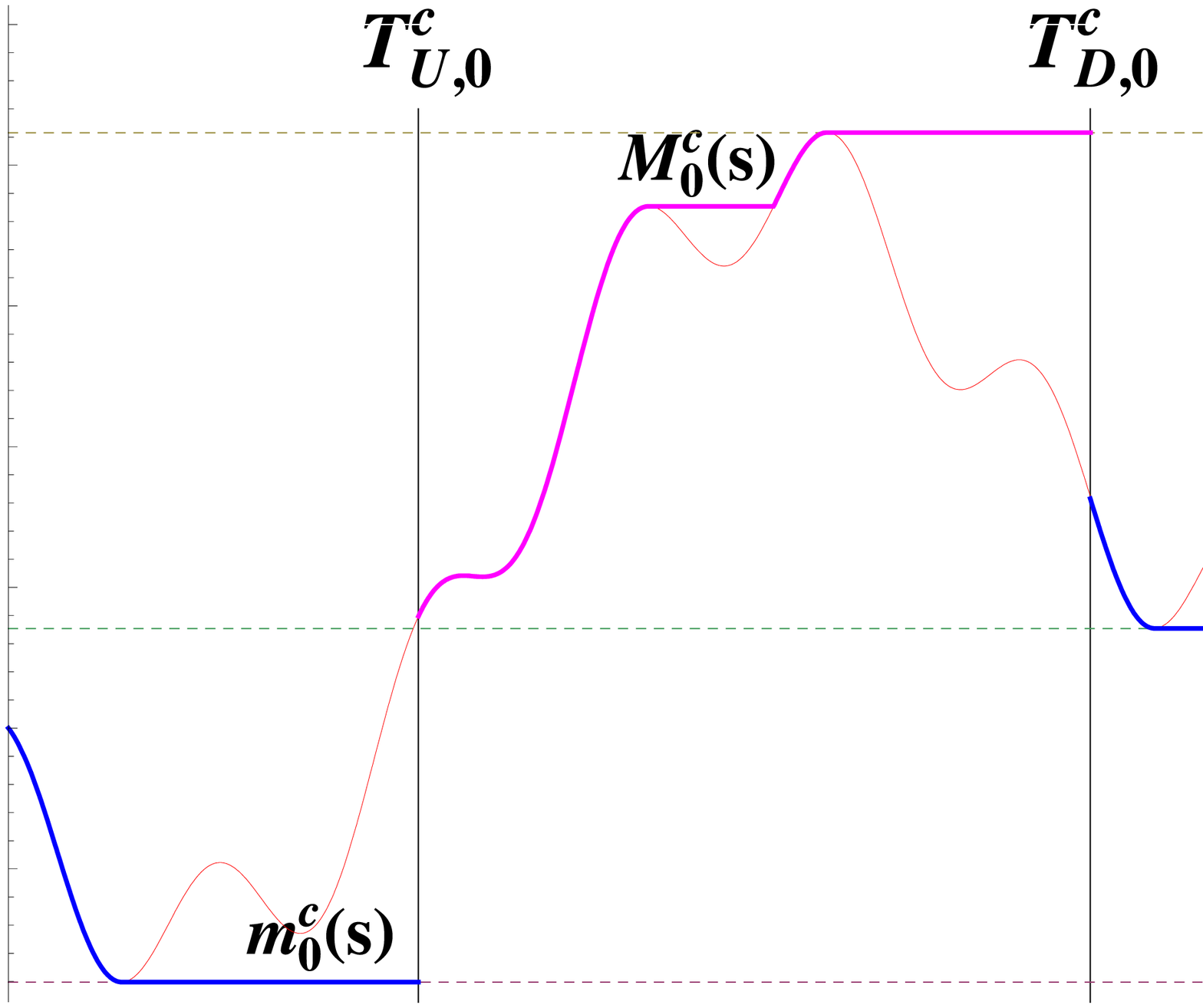}\tabularnewline
\end{tabular}
\end{figure}

The main results of this section is (cf. \cite[Theorem 2.3]{ochowski:2011lr}) 
\begin{thm}
\label{thm:JointStructure}For any c\`{a}dl\`{a}g  function $f:\ab\mapsto\R$
such that \textup{$T_{U}^{c}f\leq T_{D}^{c}f$} we have
\[
\UTV f{[a;s]}c=\DTV f{[a;s]}c=0,
\]
when $s\in\left[a;T_{U,0}^{c}\right)$ and
\[
\UTV f{[a;s]}c:=\begin{cases}
\sum_{i=0}^{k-1}\left\{ M_{i}^{c}-m_{i}^{c}-c\right\} +M_{k}^{c}\left(s\right)-m_{k}^{c}-c & \text{if }s\in\left[T_{U,k}^{c};T_{D,k}^{c}\right),\\
\sum_{i=0}^{k}\left\{ M_{i}^{c}-m_{i}^{c}-c\right\}  & \text{if }s\in\left[T_{D,k}^{c};T_{U,k+1}^{c}\right),
\end{cases}
\]
\[
\DTV f{[a;s]}c:=\begin{cases}
\sum_{i=0}^{k-1}\left\{ M_{i}^{c}-m_{i+1}^{c}-c\right\}  & \text{ if }s\in\left[T_{U,k}^{c};T_{D,k}^{c}\right),\\
\sum_{i=0}^{k-1}\left\{ M_{i}^{c}-m_{i+1}^{c}-c\right\} +M_{k}^{c}-m_{k+1}^{c}\left(s\right)-c & \text{ if }s\in\left[T_{D,k}^{c};T_{U,k+1}^{c}\right).
\end{cases}
\]
Moreover, for any c\`{a}dl\`{a}g  function $f:\ab\mapsto\R$ and any $s\in\ab$
we have 
\begin{equation}
\TTV f{[a;s]}c=\UTV f{[a;s]}c+\DTV f{[a;s]}c.\label{eq:TVisUTVDTV}
\end{equation}

\end{thm}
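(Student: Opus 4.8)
The plan is to establish the two explicit formulas for $\UTV f{[a;s]}c$ and $\DTV f{[a;s]}c$ first, and then to deduce the additive identity \eqref{TVisUTVDTV} separately. Since $\DTV f{[a;s]}c=\UTV{(-f)}{[a;s]}c$, and the time grid of $-f$ is obtained from that of $f$ by interchanging the roles of $T_{U,k}^{c}$ and $T_{D,k}^{c}$ and of $M_{k}^{c}$ and $m_{k}^{c}$ (with the one-index shift that is precisely the $m_{i+1}^{c}$ appearing in the downward formula), it suffices to prove the upward formula in full and then read off the downward one by this symmetry. The workhorse throughout will be the elementary superadditivity $\phi_{c}(x)+\phi_{c}(y)\le\phi_{c}(x+y)$ for $x,y\ge0$, together with the defining first-passage inequalities $f(T_{U,k}^{c})-m_{k}^{c}\ge c$ and $M_{k}^{c}-f(T_{D,k}^{c})\ge c$, which immediately give $M_{k}^{c}-m_{k}^{c}\ge c$ and $M_{k}^{c}-m_{k+1}^{c}\ge c$, so that every bracket $\{M_{i}^{c}-m_{i}^{c}-c\}$ and $\{M_{i}^{c}-m_{i+1}^{c}-c\}$ is nonnegative.

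For the upward formula I would prove matching lower and upper bounds. The lower bound is the easy half: for $s$ in a given phase I exhibit the admissible sequence consisting of the minimisers of the valley values $m_{i}^{c}$ and the maximisers of the running maxima $M_{i}^{c}(\cdot)$, which alternate in time; plugging this into \eqref{defUTV} and using the nonnegativity above turns each $\phi_{c}$ into $M_{i}^{c}-m_{i}^{c}-c$ (resp. $M_{k}^{c}(s)-m_{k}^{c}-c$ for the current, unfinished ascent), which is exactly the claimed value. Continuity across each phase boundary $T_{D,k}^{c}$ is then a direct check, since letting $s\uparrow T_{D,k}^{c}$ in the ascent branch reproduces the descent branch because $M_{k}^{c}(T_{D,k}^{c}-)=M_{k}^{c}$.

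The upper bound is where the real work lies and is the step I expect to be the main obstacle. Starting from an arbitrary admissible sequence $t_{1}<s_{1}<\dots<t_{n}<s_{n}\le s$, I first normalise it by superadditivity: whenever the connecting increment satisfies $f(t_{i+1})\ge f(s_{i})$ the two pairs merge into $(t_{i},s_{i+1})$ without decreasing the sum, and I may push each $t_{i}$ down to a genuine valley and each $s_{i}$ up to a genuine peak. After this reduction every connector $f(s_{i})-f(t_{i+1})$ is a strict descent, and I group the surviving up-pairs according to the ascent phase $[T_{U,k}^{c};T_{D,k}^{c})$ in which they sit; the point is that only connectors of size at least $c$ can straddle a phase boundary, while connectors internal to one ascent phase are necessarily $<c$ (else $T_{D,k}^{c}$ would have occurred earlier). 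For a block of $r$ up-pairs lying in a single ascent phase, the telescoping identity $\sum_{j}(f(s_{j})-f(t_{j}))=\big(f(s_{r})-f(t_{1})\big)+\sum_{j=1}^{r-1}\big(f(s_{j})-f(t_{j+1})\big)$, combined with $f(t_{1})\ge m_{k}^{c}$, $f(s_{r})\le M_{k}^{c}$ and the $r-1$ internal descents each $<c$, yields $\sum_{j}\phi_{c}(f(s_{j})-f(t_{j}))\le M_{k}^{c}-m_{k}^{c}-c$; summing over phases gives the claimed upper bound. The delicate bookkeeping here is matching the block decomposition to the precise phase indices and handling the final, possibly incomplete, ascent so that the running term $M_{k}^{c}(s)$ appears with the correct coefficient.

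Finally I would prove the identity \eqref{TVisUTVDTV}, which is asserted for every c\`{a}dl\`{a}g $f$ without the ordering hypothesis. The inequality $\TTV f{[a;s]}c\le\UTV f{[a;s]}c+\DTV f{[a;s]}c$ is unconditional: given any partition for $\TTV f{[a;s]}c$, merge consecutive increments of the same sign (again by superadditivity) so that the partition becomes an alternating zigzag; its up-increments then form an admissible sequence for \eqref{defUTV} and its down-increments one for the downward variation, so the sum splits and is dominated by $\UTV f{[a;s]}c+\DTV f{[a;s]}c$. For the reverse inequality I use the interleaved sequence of valley and peak locations built above (minimisers of $m_{0}^{c},m_{1}^{c},\dots$ and maximisers of $M_{0}^{c},M_{1}^{c},\dots$): evaluating the total-variation sum along it produces exactly $\sum_{k}(M_{k}^{c}-m_{k}^{c}-c)$ from the rises and $\sum_{k}(M_{k}^{c}-m_{k+1}^{c}-c)$ from the falls, i.e. $\UTV f{[a;s]}c+\DTV f{[a;s]}c$, so $\TTV f{[a;s]}c$ is at least this much. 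Since $\TTV f{[a;s]}c$ is invariant under $f\mapsto-f$ while $\text{UTV}^{c}$ and $\text{DTV}^{c}$ merely swap, the assumption $T_{U}^{c}f\le T_{D}^{c}f$ costs nothing here and the identity extends to all $f$.
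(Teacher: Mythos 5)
First, a point of context: the paper does not prove \thmref{JointStructure} at all --- it is imported verbatim from \cite[Theorem 2.3]{ochowski:2011lr} --- so there is no in-paper argument to compare yours against. Judged on its own terms, your architecture is the natural one (and essentially the one used in the cited reference): a lower bound for $\UTV f{[a;s]}c$ by evaluating \eqref{defUTV} along near-minimisers of the $m_{i}^{c}$ interlaced with near-maximisers of the $M_{i}^{c}$; an upper bound by normalising an arbitrary admissible sequence; the $f\mapsto-f$ symmetry \eqref{utvDTV} for the downward formula (with the index shift correctly identified); and, for \eqref{TVisUTVDTV}, the merge-by-superadditivity argument for ``$\leq$'' plus the two explicit formulas for ``$\geq$'', with the ordering hypothesis removed by applying the result to $-f$. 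The lower bound, the ``$\leq$'' half of \eqref{TVisUTVDTV}, and the reduction of the downward case to the upward one are all sound as written.

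The genuine gap sits exactly where you flag ``delicate bookkeeping'': the upper bound for $\UTV f{[a;s]}c$. Your block estimate is proved only for a block of up-pairs \emph{lying in a single ascent phase}, and your dichotomy (connectors internal to a phase are $<c$; connectors $\geq c$ straddle a boundary) controls only the descents \emph{between} consecutive up-pairs. It says nothing about a single up-pair $\rbr{t_{j},s_{j}}$ whose endpoints lie in different phases, say $f(t_{j})$ near $m_{k}^{c}$ and $f(s_{j})$ near $M_{l}^{c}$ with $l>k$; such a pair contributes up to $M_{l}^{c}-m_{k}^{c}-c$, which in general exceeds any single bracket $M_{i}^{c}-m_{i}^{c}-c$, so it must be charged against several brackets simultaneously. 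The charge is affordable because
\[
M_{l}^{c}-m_{k}^{c}-c=\sum_{i=k}^{l}\rbr{M_{i}^{c}-m_{i}^{c}-c}-\sum_{i=k}^{l-1}\rbr{M_{i}^{c}-m_{i+1}^{c}-c}
\]
and each $M_{i}^{c}-m_{i+1}^{c}\geq c$, but one must then verify that the phases $k,\dots,l$ consumed by this pair are never consumed again by later pairs --- i.e.\ that the ``current phase index'' is nondecreasing along the zigzag --- which is precisely the invariant an induction (on $n$ or on the index of the last completed phase) has to carry. As written the upper bound does not close without this, and everything downstream --- the downward formula and the ``$\geq$'' half of \eqref{TVisUTVDTV} --- depends on it. The fix is routine but must be supplied.
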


\subsection{Basic properties of $\TTV f{\ab}c$ , $\UTV f{\ab}c$ and $\DTV f{\ab}c$}

We will now list some properties, most of which is used in the paper.
These are taken from \cite[Section 2.4, Section 2.5]{ochowski:2011lr}.
Unless stated otherwise the functions considered below are c\`{a}dl\`{a}g 
\begin{itemize}
\item For any strictly increasing and continuous function $s:\R \rightarrow \R$
\begin{equation}
\TTV f{\ab}c=\TTV{f\circ s^{-1}}{[s(a);s(b)]}c,\label{timechange}
\end{equation}
the analogous equalities hold for $\text{UTV}^{c}$ and $\text{DTV}^{c}.$
\item For any $f:\ab\mapsto\R$ and any $c>0$ we have
\begin{equation}
\DTV f{\ab}c=\UTV{-f}{\ab}c.\label{eq:utvDTV}
\end{equation}

\item For any $s\in(a;b)$ we have 
\begin{equation}
\TTV f{\ab}c\geq\TTV f{[a;s]}c+\TTV f{[s;t]}c,\label{inter2}
\end{equation}
 and the analogous inequalities hold for $\text{UTV}^{c}$ and $\text{DTV}^{c}.$
\item On the other hand, for any $s\in(a;b)$ we have 
\begin{equation}
\TTV f{\ab}c\leq\TTV f{[a;s]}c+\TTV f{[s;t]}c+c,\label{inter3}
\end{equation}
and the analogous inequalities hold for $\text{UTV}^{c}$ and $\text{DTV}^{c}.$
\item For any $f,g:\left[a;b\right]\rightarrow\mathbb{R}$ and $c_{1},c_{2}\geq0$
we have
\begin{equation}
\TTV{f+g}{\ab}{c_{1}+c_{2}}\leq\TTV f{\ab}{c_{1}}+\TTV g{\ab}{c_{2}},\label{eq:sum}
\end{equation}
and the analogous inequalities hold for $\text{UTV}^{c}$ and $\text{DTV}^{c}$.
Note that in above we admit some quantities to be infinite in case $c_{1}=0$
or $c_{2}=0$. In particular
\begin{equation}
\left|\TTV{f+g}{\ab}c-\TTV f{\ab}c\right|\leq\TTV g{\ab}{}.\label{eq:lipschitzcondition}
\end{equation}
These facts were not proved in \cite{ochowski:2011lr}. We offer a
proof in Fact \factref{lipschits} below.
\item For any $f:\ab\mapsto\R$ mapping 
\[
(0,+\infty)\ni c\mapsto\TTV f{\ab}c,
\]
is convex and decreasing hence continuous. The same holds true for
$\text{UTV}^{c}$ and $\text{DTV}^{c}$. Moreover, though not mentioned
in \cite{ochowski:2011lr}, it can be easily upgraded to functional
setting. E.g. we define functional $T:(0; +\infty)\mapsto\mathcal{D}$ (Skorohod
space of c\`{a}dl\`{a}g  functions) given by $T(c)(t):=\TTV f{[a;t]}c$ is
convex and decreasing in a point-wise sense.
\item For any $f:\ab\mapsto\R$ we have
\begin{equation}
\lim_{c\searrow0}\TTV f{\ab}c=\TTV f{\ab}{},\label{eq:convergencetoTV}
\end{equation}
we recall that the right-hand side might be infinite. \end{itemize}
\begin{fact}
\label{fact:lipschits}For any $f,g:\left[a;b\right]\rightarrow\mathbb{R}$
and $c_{1},c_{2}\geq0$ we have 
\begin{equation}
\TTV{f+g}{\ab}{c_{1}+c_{2}}\leq\TTV f{\ab}{c_{1}}+\TTV g{\ab}{c_{2}},\label{eq:krupnik}
\end{equation}
and the analogous inequalities hold for $\text{UTV}^{c}$ and $\text{DTV}^{c}.$\end{fact}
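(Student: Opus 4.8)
The plan is to reduce all three inequalities to a single elementary pointwise estimate for the function $\phi_{c}(x)=\max\{x-c,0\}$. I claim that for all real $u,v$ and all $c_{1},c_{2}\geq0$,
\[
\phi_{c_{1}+c_{2}}(u+v)\leq\phi_{c_{1}}(u)+\phi_{c_{2}}(v).
\]
Indeed, setting $a=u-c_{1}$ and $b=v-c_{2}$, this is precisely the subadditivity of the positive part, $(a+b)_{+}\leq a_{+}+b_{+}$, which holds because $a+b\leq a_{+}+b_{+}$ and the right-hand side is non-negative. This is the one and only nontrivial ingredient; everything else is bookkeeping over partitions.

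For the total-variation inequality \eqref{krupnik} I would fix an arbitrary partition $a\leq t_{1}<\cdots<t_{n}\leq b$ and estimate each summand. By the triangle inequality $|(f+g)(t_{i+1})-(f+g)(t_{i})|\leq|f(t_{i+1})-f(t_{i})|+|g(t_{i+1})-g(t_{i})|$ together with the monotonicity of $\phi_{c_{1}+c_{2}}$, followed by the pointwise estimate above applied to the two non-negative increments, one obtains
\[
\phi_{c_{1}+c_{2}}\!\left(|(f+g)(t_{i+1})-(f+g)(t_{i})|\right)\leq\phi_{c_{1}}\!\left(|f(t_{i+1})-f(t_{i})|\right)+\phi_{c_{2}}\!\left(|g(t_{i+1})-g(t_{i})|\right).
\]
Summing over $i$ and recognising that the two resulting sums are particular competitors in the suprema defining $\TTV f{\ab}{c_{1}}$ and $\TTV g{\ab}{c_{2}}$ respectively bounds the left-hand sum by $\TTV f{\ab}{c_{1}}+\TTV g{\ab}{c_{2}}$; taking the supremum over all partitions on the left then yields \eqref{krupnik}.

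For the $\text{UTV}^{c}$ and $\text{DTV}^{c}$ versions the argument is in fact cleaner, because the admissible configurations $a\leq t_{1}<s_{1}<\cdots<t_{n}<s_{n}\leq b$ are common to $f+g$, $f$ and $g$, and the increment splits exactly as $(f+g)(s_{i})-(f+g)(t_{i})=[f(s_{i})-f(t_{i})]+[g(s_{i})-g(t_{i})]$. Here no triangle inequality is needed: I would apply the pointwise estimate directly to the (possibly signed) increments $u=f(s_{i})-f(t_{i})$ and $v=g(s_{i})-g(t_{i})$, sum over $i$, and take suprema. This is exactly why I stated the pointwise inequality for arbitrary reals rather than only for non-negative arguments, and it is the single point that deserves care, since in $\text{UTV}^{c},\text{DTV}^{c}$ the arguments of $\phi$ are not taken in absolute value. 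The $\text{DTV}^{c}$ case may be handled identically, or simply deduced from the $\text{UTV}^{c}$ case via $\DTV h{\ab}c=\UTV{-h}{\ab}c$ (property \eqref{utvDTV}).

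There is no deeper obstacle here. Finally I would record the consequence \eqref{lipschitzcondition}: the case $c_{2}=0$ of \eqref{krupnik} gives $\TTV{f+g}{\ab}c\leq\TTV f{\ab}c+\TTV g{\ab}{}$, while applying \eqref{krupnik} once more to the splitting $f=(f+g)+(-g)$ and using $\TTV{-g}{\ab}{}=\TTV g{\ab}{}$ produces the reverse bound $\TTV f{\ab}c\leq\TTV{f+g}{\ab}c+\TTV g{\ab}{}$; together these give $\left|\TTV{f+g}{\ab}c-\TTV f{\ab}c\right|\leq\TTV g{\ab}{}$.
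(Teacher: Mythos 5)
Your proof is correct, and its heart --- the pointwise subadditivity $\phi_{c_{1}+c_{2}}(u+v)\leq\phi_{c_{1}}(u)+\phi_{c_{2}}(v)$ for arbitrary real $u,v$, i.e.\ $(a+b)_{+}\leq a_{+}+b_{+}$ --- is exactly the inequality the paper uses. For $\text{UTV}^{c}$ and $\text{DTV}^{c}$ the two arguments coincide: apply the pointwise estimate directly to the signed increments over a common admissible configuration, with $\text{DTV}^{c}$ obtainable from $\text{UTV}^{c}$ via \eqref{eq:utvDTV}. Where you genuinely diverge is the truncated variation itself: you prove \eqref{eq:krupnik} directly from definition \eqref{eq:TVDefinition}, first using the triangle inequality on $|(f+g)(t_{i+1})-(f+g)(t_{i})|$ together with the monotonicity of $\phi_{c_{1}+c_{2}}$ and only then the pointwise estimate, whereas the paper deduces \eqref{eq:krupnik} by adding the $\text{UTV}^{c}$ and $\text{DTV}^{c}$ inequalities and invoking the structural identity $\TTV f{[a;s]}c=\UTV f{[a;s]}c+\DTV f{[a;s]}c$ of \eqref{eq:TVisUTVDTV}. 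Your route is more self-contained, since it does not lean on \thmref{JointStructure}, at the cost of one extra (harmless) application of the triangle inequality; the paper's route is shorter given that the decomposition is already at hand. Your explicit observation that the pointwise inequality must be stated for signed arguments in the $\text{UTV}^{c}$/$\text{DTV}^{c}$ case is precisely the point that deserves care, and your derivation of \eqref{eq:lipschitzcondition} from the $c_{2}=0$ case applied to both splittings is also the intended one.
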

\begin{proof}
The inequality for $\text{UTV}^{c}$ holds by definition \eqref{defUTV}
and the inequality
\begin{multline*}
\max\left\{ f\left(s\right)+g\left(s\right)-f\left(t\right)-g\left(t\right)-c_{1}-c_{2},0\right\} =\max\left\{ f\left(s\right)-f\left(t\right)-c_{1}+g\left(s\right)-g\left(t\right)-c_{2},0\right\} \\
\leq\max\left\{ f\left(s\right)-f\left(t\right)-c_{1},0\right\} +\max\left\{ g\left(s\right)-g\left(t\right)-c_{2},0\right\} .
\end{multline*}
By \eqref{utvDTV} we have similar property for $\text{DTV}^{c}$.
Finally, to obtain \eqref{krupnik} it is enough to utilize \eqref{TVisUTVDTV}.
\end{proof}

\section{Proof of Theorem \ref{thm:CLT}\label{sec:ProofCLT}}

The proof structure reflects the outline contained in Remark \ref{rem:CLTproofOutline}.
We start with

\subsection{Proof for Wiener process with drift}

In our proof we will use an Anscombe-like result. It is not much more
than a reformulation of \cite[Theorem 4.5.5]{Silvestrov:2004fk} to
our specific needs. From now on we will use {}``$\cleq$'' to denote
the situation when an equality or inequality holds with some constant
which is irrelevant for calculations. Our setting is as follows. Let
us fix some $T>0$ and 
\[
(D_{i}(c),Z_{i}(c)),\quad i\geq1,
\]
 be sequences of i.i.d. random vectors indexed by certain parameter
$c\in(0,1]$. We define 
\begin{equation}
M_{c}(t):=\min\cbr{i\geq0:\sum_{i=1}^{i+1}D_{i}(c)>t},\label{eq:mcDef}
\end{equation}
 
\begin{equation}
P_{c}(t):=\rbr{\sum_{i=1}^{M_{c}(t)}Z_{i}(c)}-\frac{\ev{Z_{1}(c)}}{\ev{D_{1}(c)}}t,\quad t\in[0;T].\label{eq:pct}
\end{equation}
 Let us observe that such defined $M_{c},P_{c}$ are c\`{a}dl\`{a}g processes.
We will use the following assumptions 
\begin{description}
\item [{(A1)}] For any $c>0$ we have $D_{1}(c)>0$ a.s. and $\ev{D_{1}(c)}\rightarrow0$
as $c\searrow0$.
\item [{(A2)}] We denote $X_{i}(c):=Z_{i}(c)-(\ev{Z_{1}(c)}/\ev{D_{1}(c)})D_{i}(c)$.
We have $\ev{X_{i}(c)}=0$. We assume that there exists $\sigma>0$
such that 
\[
\frac{\ev{X_{1}(c)^{2}}}{\ev{D_{1}(c)}}\conv\sigma^{2},\quad\text{as }c\searrow0.
\]

\item [{(A3)}] There exists $\delta\in(0,2]$ such that
\[
\frac{\ev{|X_{1}(c)|^{2+\delta}}}{\ev{D_{1}(c)}}\conv0,\quad\text{as }c\searrow0.
\]

\item [{(A4)}] There exists $\delta>0,C>0$ such that for any $c\in(0;1]$
we have 
\[
\ev{|D_{1}(c)|^{1+\delta}}\leq C(\ev{D_{1}(c)})^{1+\delta}.
\]

\end{description}
Before formulation of the fact we define 
\begin{equation}
\mathcal{D}:=\mathcal{D}([0;T],\R):=\cbr{f:[0;T]\mapsto\R:f\text{ is c\`{a}dl\`{a}g}},\label{eq:cadlagSpace}
\end{equation}
we equip this space with $\norm{\cdot}{\infty}{}$-norm. This may
seem unusual, as the Skorohod metric (see \cite[Chapter 3]{Billingsley:1968aa})
is a more natural choice for space $\mathcal{D}$. Let us note however
that in all cases we will obtain the convergence to a continuous limits.
In such case both notions are equivalent (see \cite[Section 18]{Billingsley:1968aa}.
\begin{fact}
\label{fact:AnscombeLike}Let $T>0$ and assume that (A1)-(A4) hold.
Then 
\[
P_{c}\rightarrow^{d}\sigma B,\quad\text{as}\: c\searrow0,
\]
\begin{equation}
(\ev{D_{1}(c)})M_{c}\conv^{d}id,\quad\text{as}\: c\searrow0,\label{eq:mozg}
\end{equation}
where $\sigma^{2}$ is the same as in (A2), $id(x)=x$, and the convergence
is understood as weak convergence in $\mathcal{D}([0;T],\R)$. \end{fact}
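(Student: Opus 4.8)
The plan is to recognise $P_{c}$ as a renewal--reward process and to reduce the statement to a functional central limit theorem for a partial--sum process evaluated at a random renewal time, in the spirit of \cite[Theorem 4.5.5]{Silvestrov:2004fk}. Write $a_{c}:=\ev{D_{1}(c)}$, $b_{c}:=\ev{Z_{1}(c)}$ and recall $X_{i}(c)=Z_{i}(c)-(b_{c}/a_{c})D_{i}(c)$. Since the sum in \eqref{pct} runs only over completed cycles, $\sum_{i=1}^{M_{c}(t)}D_{i}(c)\le t<\sum_{i=1}^{M_{c}(t)+1}D_{i}(c)$, so with the overshoot $R_{c}(t):=t-\sum_{i=1}^{M_{c}(t)}D_{i}(c)\in[0,D_{M_{c}(t)+1}(c))$ one obtains
\[
P_{c}(t)=\sum_{i=1}^{M_{c}(t)}X_{i}(c)-\frac{b_{c}}{a_{c}}R_{c}(t).
\]
Everything thus reduces to three ingredients: a functional CLT for the deterministically indexed partial sums of the residuals $X_{i}(c)$, a functional law of large numbers for the renewal clock $a_{c}M_{c}$ (which is also the second assertion \eqref{mozg}), and an Anscombe-type random time change linking the two, after which the sawtooth term $\frac{b_{c}}{a_{c}}R_{c}$ is shown to be uniformly negligible.

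First I would prove the partial--sum FCLT. Set $V_{c}(s):=\sum_{i=1}^{\lfloor s/a_{c}\rfloor}X_{i}(c)$ for $s\in[0;T]$. For each $c$ these are i.i.d.\ and centred by (A2), and the number of terms up to $s$ is $\lfloor s/a_{c}\rfloor\to\infty$ by (A1). By (A2) the variance satisfies $\lfloor s/a_{c}\rfloor\,\ev{X_{1}(c)^{2}}\to s\sigma^{2}$, while (A3) is precisely the Lyapunov condition
\[
\frac{\lfloor s/a_{c}\rfloor\,\ev{|X_{1}(c)|^{2+\delta}}}{\rbr{\lfloor s/a_{c}\rfloor\,\ev{X_{1}(c)^{2}}}^{1+\delta/2}}\to0.
\]
Hence the Lindeberg--Feller functional CLT for triangular arrays gives $V_{c}\Rightarrow\sigma B$. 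I would establish this in the Skorohod $J_{1}$ topology and then upgrade to the $\norm{\cdot}{\infty}{}$ topology, legitimate because the limit $\sigma B$ is continuous (cf.\ \cite[Section 18]{Billingsley:1968aa}).

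Second, the clock. To get $\theta_{c}:=a_{c}M_{c}\Rightarrow id$ I would first show pointwise convergence $a_{c}M_{c}(t)\to t$ in probability: using $M_{c}(t)\ge n\iff\sum_{i=1}^{n}D_{i}(c)\le t$ together with Chebyshev, the moment bound (A4) (via a von Bahr--Esseen/Rosenthal estimate $\ev{|\sum_{i\le n}(D_{i}(c)-a_{c})|^{1+\delta}}\cleq n\,a_{c}^{1+\delta}$) controls the fluctuations of $\sum_{i\le n}D_{i}(c)$ around $na_{c}$ and pins $M_{c}(t)$ to $t/a_{c}$. Since $\theta_{c}$ is nondecreasing and the limit $id$ is continuous and strictly increasing, pointwise convergence in probability upgrades to uniform convergence in probability by a P\'{o}lya/Dini-type argument; this is exactly \eqref{mozg}.

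The last step is the random time change, and it is where the real work lies. Observe $\sum_{i=1}^{M_{c}(t)}X_{i}(c)=V_{c}(\theta_{c}(t))$ because $\lfloor\theta_{c}(t)/a_{c}\rfloor=M_{c}(t)$. As the limiting clock $id$ is deterministic, the marginal convergence $V_{c}\Rightarrow\sigma B$ together with $\theta_{c}\to id$ in probability yields the joint convergence $(V_{c},\theta_{c})\Rightarrow(\sigma B,id)$; applying the continuous mapping theorem to the composition map $(x,\lambda)\mapsto x\circ\lambda$, continuous at $(\sigma B,id)$ since $\sigma B$ is continuous and $id$ continuous and increasing, gives $V_{c}\circ\theta_{c}\Rightarrow\sigma B$. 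Finally I would dispatch the overshoot: $\sup_{t\le T}R_{c}(t)\le\max_{i\le M_{c}(T)+1}D_{i}(c)$, and a union bound using (A4) together with $M_{c}(T)\cleq T/a_{c}$ (from the clock LLN) shows $\frac{b_{c}}{a_{c}}\sup_{t\le T}R_{c}(t)\to0$ in probability, so Slutsky yields $P_{c}\Rightarrow\sigma B$. I expect the main obstacle to be this Anscombe step performed at the level of processes in the non-separable space $(\mathcal{D},\norm{\cdot}{\infty}{})$: one must guarantee that substituting the random renewal clock for the natural one does not perturb the weak limit, which rests on the uniform stochastic equicontinuity of $V_{c}$ hidden in the FCLT tightness, while simultaneously keeping the sawtooth under control. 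The latter is genuinely delicate because $b_{c}/a_{c}$ typically diverges as $c\searrow0$, so the negligibility of $\frac{b_{c}}{a_{c}}\max_{i}D_{i}(c)$ requires the strengthened moment bound (A4) rather than mere integrability of $D_{1}(c)$.
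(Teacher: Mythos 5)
Your route is genuinely different in execution from the paper's, though identical in spirit. The paper proves this Fact by translating its setup into the notation of Silvestrov's Theorem 4.5.5 and verifying that theorem's seven hypotheses ($\mathcal{T}_4,\mathcal{S}_4,\mathcal{S}_5,\mathcal{S}_7,\mathcal{S}_8,\mathcal{S}_9,\mathcal{J}_{20}$) one by one via Chebyshev-type estimates from (A1)--(A4); the limit identification and \eqref{eq:mozg} are then read off from Silvestrov's conclusion. You instead reconstruct the Anscombe mechanism from scratch: a Lyapunov FCLT for the deterministically indexed partial sums of the residuals $X_i(c)$, a renewal LLN for the clock $a_cM_c$, a random time substitution via the continuous mapping theorem, and an overshoot estimate. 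Your verifications of the Lyapunov condition from (A2)--(A3) and of the clock LLN from (A4) are correct and are essentially the same computations the paper performs when checking $\mathcal{S}_4$--$\mathcal{S}_9$. What your approach buys is self-containedness and transparency about where each hypothesis is used; what it costs is that you must supply yourself the gluing steps (joint convergence with a deterministic clock limit, continuity of composition at a continuous limit path, and the overshoot) that the cited theorem packages away.

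The one step that does not close as written is the sawtooth. Your union bound gives
\[
\pr{\max_{i\leq T/a_c}D_i(c)>\epsilon\, a_c/b_c}\ \cleq\ \frac{T}{a_c}\cdot\ev{|D_1(c)|^{1+\delta}}\cdot\rbr{\frac{b_c}{\epsilon a_c}}^{1+\delta}\ \cleq\ T\,\epsilon^{-(1+\delta)}\,\frac{b_c^{1+\delta}}{a_c},
\]
and (A1)--(A4) impose no control whatsoever on $b_c=\ev{Z_1(c)}$: one can take $Z_i(c)=X_i(c)+\lambda_cD_i(c)$ with $\lambda_c=b_c/a_c$ arbitrary without disturbing (A1)--(A4), and if $\lambda_c$ grows fast enough the term $\frac{b_c}{a_c}R_c(t)$ does not vanish. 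So your assertion that (A4) alone rescues this step is not justified at the stated level of generality, even though you correctly identified it as the delicate point. In the paper's applications the bound does hold ($b_c\asymp c$, $a_c\asymp c^2$, $\delta=3$, so $b_c^{1+\delta}/a_c\to0$), and the paper's own write-up never addresses the overshoot explicitly because it is absorbed into the citation of Silvestrov; but if you are proving the Fact from (A1)--(A4) alone you either need an additional hypothesis of the form $\ev{Z_1(c)}^{1+\delta}=o(\ev{D_1(c)})$, or you should bound the sawtooth differently, e.g.\ by writing $\frac{b_c}{a_c}R_c(t)=Z$-increments minus $X$-increments over the current incomplete cycle and controlling $\max_i|X_i(c)|$ via (A3) together with $\max_i Z_i(c)$ directly, which is how the paper handles the analogous discrepancy later in the proof of Lemma \ref{lem:BMquadruple}.
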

\begin{proof}
We define 
\begin{equation}
S_{c}(n):=\sum_{i=1}^{n}Z_{i}(c),\quad V_{c}(n):=\sum_{i=1}^{n}D_{i}(c),\quad n\in\mathbb{N}.\label{eq:defSC}
\end{equation}
 Moreover, let us denote $f(c):=\frac{\ev{Z_{1}(c)}}{\ev{D_{1}(c)}}$
and we recall that $X_{i}(c):=Z_{i}(c)-f(c)D_{i}(c)$. We define a
family of auxiliary processes 
\begin{equation}
P_{c}^{1}(t):=H_{c}(\lfloor g(c)t\rfloor),\quad t\geq0,\label{eq:procss}
\end{equation}
 where $H_{c}(n):=S_{c}(n)-f(c)V_{c}(n)$ and $g(c):=(\ev{D_{1}(c)})^{-1}$.
By (A1) $g(c)\conv+\infty$ as $c\searrow0$.

Now the proof follows by \cite[Theorem 4.5.5, p. 290]{Silvestrov:2004fk}.
The assumptions of \cite[Theorem 4.5.5]{Silvestrov:2004fk} consist of seven conditions denoted
by $\mathcal{T}_{4}\mathcal{,S}_{4},\mathcal{S}_{5}\mathcal{,S}_{7}\mathcal{%
,S}_{8}\mathcal{,S}_{9}$ and $\mathcal{J}_{20}.$ These conditions read as:
\begin{itemize}
\item  $\left( \mathcal{T}_{4}\right)$: $\left( \kappa _{\varepsilon ,k},\xi
_{\varepsilon ,k}\right) ,$ $k=1,2,...,$ is a sequence of i.i.d. random
vectors that take values in $\left[ 0;+\infty \right) \times \mathbb{R;}$

\item  $\left( \mathcal{S}_{4}\right)$: $n_{\varepsilon }\mathbb{P}\left(
\kappa _{\varepsilon ,k}>u\right) \rightarrow \pi _{1}\left( u\right) $ as $%
\varepsilon \rightarrow 0$ for all $u>0,$ which are points of continuity of
the limitting function $\pi _{1}\left( u\right) ;$

\item  $\left( \mathcal{S}_{5}\right)$: $n_{\varepsilon }\mathbb{E}\kappa
_{\varepsilon ,k}1_{\left\{ \kappa _{\varepsilon ,k}\leq u\right\}
}\rightarrow c\left( u\right) $ as $\varepsilon \rightarrow 0$ for some $u>0,
$ which is a point of continuity of $\pi _{1}\left( u\right) ;$

\item  $\left( \mathcal{S}_{7}\right)$: $n_{\varepsilon }\mathbb{P}\left(
\left| \xi _{\varepsilon ,k}\right| >u\right) \rightarrow 0$ as $\varepsilon
\rightarrow 0$ for every $u>0;$

\item  $\left( \mathcal{S}_{8}\right)$: $n_{\varepsilon }\mathbb{E}\left|
\xi _{\varepsilon ,k}\right| 1_{\left\{ \left| \xi _{\varepsilon ,k}\right|
\leq u\right\} }\rightarrow a$ as $\varepsilon \rightarrow 0$ for some $u>0;$

\item  $\left( \mathcal{S}_{9}\right)$: $n_{\varepsilon }\mathbb{D}%
^{2}\left| \xi _{\varepsilon ,k}\right| 1_{\left\{ \left| \xi _{\varepsilon
,k}\right| \leq u\right\} }\rightarrow b^{2}$ as $\varepsilon \rightarrow 0$
for some $u>0;$

\item  $\left( \mathcal{J}_{20}\right)$: $c=c\left( u\right)
-\int_{0}^{u}sd\pi _{1}\left( s\right) >0,$ where $\pi _{1}\left( s\right) $
and $c\left( u\right) $ are obtained in $\left( \mathcal{S}_{4}\right) \ $%
and $\left( \mathcal{S}_{5}\right) $ respectively. 
\end{itemize}
Before verifying assumptions we list how our notation translates to
the one of \cite{Silvestrov:2004fk}, $c$ is $\epsilon$, $\lceil g(c)\rceil$
is $n_{\epsilon}$, $D_{i}(c)$ is $\kappa_{\epsilon,i}$ and $X_{i}(c)$
is $\xi_{\epsilon,i}$. Condition $\mathcal{T}_{4}$ (p. 287) is obviously
fulfilled. Conditions $\mathcal{S}_{4}$ and $\mathcal{S}_{5}$ (p.
283) hold with $\pi(u)=0$ and $c(u)=1$ respectively. Indeed, let
us fix $u>0$. $\mathcal{S}_{4}$ writes as 
\[
\lceil g(c)\rceil\pr{D_{1}(c)>u}\leq\lceil g(c)\rceil u^{-1-\delta}\ev{}|D_{1}(c)|^{1+\delta}\cleq\lceil g(c)\rceil g(c)^{-(1+\delta)}\rightarrow0,\quad\text{ as }c\searrow0,
\]
where we used assumptions (A1), (A4) and the Chebyshev inequality.
We will use a few times an obvious inequality 
\begin{equation}
|x|^{\delta_{1}+\delta_{2}}\geq|u|^{\delta_{1}}|x|^{\delta_{2}},\label{eq:simpleTMP}
\end{equation}
valid for any $\delta_{1},\delta_{2}>0$ and $|x|\geq|u|$. We check
that 
\begin{equation}
\lceil g(c)\rceil\ev{}D_{1}(c)1_{\cbr{D_{1}(c)>u}}\leq\lceil g(c)\rceil u^{-\delta}\ev{}|D_{1}(c)|^{1+\delta}\rightarrow0,\quad\text{ as }c\searrow0,\label{eq:tmp74}
\end{equation}
again by (A1), (A4) and \eqref{simpleTMP}. The expression in condition
$\mathcal{S}_{5}$ writes in our notation as $\lceil g(c)\rceil\ev{}D_{1}(c)1_{\cbr{D_{1}(c)\leq u}}$.
By \eqref{tmp74} its limit does not depend on $u$ and is the same
as the one of 
\[
\lceil g(c)\rceil\ev{}D_{1}(c)\rightarrow1,\quad\text{ as }c\searrow0,
\]
which follows by (A1) and the definition of $g(c)$. 

We will now verify conditions $\mathcal{S}_{7}$, $\mathcal{S}_{8}$,
$\mathcal{S}_{9}$ (p. 287-288) with $a=0$ and $b^{2}=\sigma^{2}$.
Let $u>0$, the condition $\mathcal{S}_{7}$ writes as 
\[
\lceil g(c)\rceil\pr{|X_{1}(c)|\geq u}\leq\lceil g(c)\rceil u^{-(2+\delta)}\ev{}|X_{1}(c)|^{2+\delta}\rightarrow0,\quad\text{ as }c\searrow0,
\]
where we used assumption (A3) and the Chebyshev inequality. Further
we have
\[
\lceil g(c)\rceil\ev{}\rbr{|X_{1}(c)|1_{\cbr{|X_{1}(c)|>u}}}\leq\lceil g(c)\rceil u^{-(1+\delta)}\ev{}|X_{1}(c)|^{2+\delta}\rightarrow0,\quad\text{ as }c\searrow0,
\]
where we used assumption (A3) and \eqref{simpleTMP}. Now $\mathcal{S}_{8}$
follows directly from above and the equality 
\[
\ev{}\rbr{X_{1}(c)1_{\cbr{|X_{1}(c)|>u}}}=-\ev{}\rbr{X_{1}(c)1_{\cbr{|X_{1}(c)|\leq u}}}
\]
 which is a consequence of the fact that $\ev{}X_{i}(c)=0$. Let us
now observe that 
\[
\lceil g(c)\rceil\ev{}\rbr{X_{1}(c)^{2}1_{\cbr{|X_{1}(c)|>u}}}\leq\lceil g(c)\rceil u^{-\delta}\ev{}|X_{1}(c)|^{2+\delta}\rightarrow0,\quad\text{ as }c\searrow0,
\]
where we again used assumption (A3) and \eqref{simpleTMP}. By the
above considerations we have that $\lim_{c\searrow0}\lceil g(c)\rceil\var(X_{1}(c)1_{\cbr{|X_{1}(c)|\leq u}})$
is the same as $\lim_{c\searrow0}\lceil g(c)\rceil\var(X_{1}(c))$.
Now $\mathcal{S}_{9}$ follows directly from (A2). Finally, $\mathcal{J}_{20}$
(p.285) holds with $c=1$ see also \cite[(4.5.2)]{Silvestrov:2004fk}. 

Now, it is straightforward to identify the limit using the description
in \cite[p. 284 and p. 288]{Silvestrov:2004fk}. Indeed, the process
$\kappa_{0}$ (p. 284) is simply given by $\kappa_{0}(t)=t$ (notice
that on the right hand side of formula (4.5.1) in \cite[p. 284]{Silvestrov:2004fk}
one should replace $z$ by $y$) so its inverse $\nu_{0}$ is also
$\nu_{0}(t)=t$ (which proves \eqref{mozg}). The process $\xi_{0}$
is the same as in $\mathcal{A}_{65}$ (p. 288). Let us note that Silvestrov's
$\conv^{U}$ is the same convergence we need, see \cite[Definition 2.4.2]{Silvestrov:2004fk}.

\end{proof}
Let $W$ be a standard Wiener process and $\mu\in\R$. We denote a
Wiener process with drift $\mu$ by 
\begin{equation}
X_{t}:=W_{t}+\mu t,\quad t\geq0.\label{eq:BMwithDrift}
\end{equation}
Our first result is the following
\begin{lem}
\label{lem:BMquadruple}Let $T>0$ and $X$ be a Wiener process with
drift given by (\ref{eq:BMwithDrift}). We have 
\begin{equation}
\rbr{X_{t}-\mu t,\TTV Xtc-\frac{t}{c}}\conv^{d}\rbr{W_{t},3^{-1/2}B_{t}},\quad\text{as }c\searrow0,\label{eq:BrownCLT}
\end{equation}
where $(W,B)$ are independent standard Wiener processes. The convergence
is understood as weak convergence in $\mathcal{C}([0;T],\R)^{2}$
topology.\end{lem}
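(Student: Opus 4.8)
The plan is to exploit the fact that, for a Wiener process with drift, $\text{TV}^{c}$ possesses an exact renewal structure and then to feed this structure into the Anscombe-type statement of Fact \factref{AnscombeLike}. Since the first coordinate satisfies $X_{t}-\mu t=W_{t}$ identically, all the content lies in the second coordinate together with the joint law, and in particular in the asserted independence of the two limits. First I would invoke Theorem \thmref{JointStructure} together with the strong Markov property of $X$: the instants $T_{D,k}^{c}$ are stopping times at which the pair (current position, frozen reference level) returns to a fixed relative configuration, so by spatial homogeneity of $X$ the increments accrued over the cycles $[T_{D,k-1}^{c};T_{D,k}^{c})$ form an i.i.d. sequence. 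I then set $D_{i}(c)$ to be the $i$-th cycle length (which has the law of the exit time of $X$ from a strip of width $c$) and $Z_{i}(c)$ to be the increment of $\text{TV}^{c}$ over that cycle, read off from the explicit formulas of Theorem \thmref{JointStructure} via \eqref{TVisUTVDTV}.

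Next I would identify $\TTV Xtc$ with the renewal sum. By monotonicity \ref{inter2} and subadditivity \ref{inter3},
\[
\sum_{i=1}^{M_{c}(t)}Z_{i}(c)\le\TTV Xtc\le\sum_{i=1}^{M_{c}(t)}Z_{i}(c)+\TTV X{[T_{U,M_{c}(t)}^{c};t]}c+c,
\]
so $\TTV Xtc$ differs from the renewal sum only by the contribution of one incomplete cycle plus $c$; a tail estimate on $Z_{1}(c)$ (available from the explicit Laplace transform, cf. \eqref{LaplaceTV1}, which yields exponential tails) shows this boundary term is $o(1)$ uniformly on $[0;T]$. The Anscombe statement centers by $(\ev{Z_{1}(c)}/\ev{D_{1}(c)})\,t$, whereas the Lemma centers by $t/c$; by the renewal--reward identity and Fact \factref{TVnas} the ratio equals $m_{\mu}^{c}=\mu\coth(c\mu)=1/c+O(c)$, so replacing $m_{\mu}^{c}t$ by $t/c$ introduces only a deterministic error $O(c)\,t$ vanishing uniformly on $[0;T]$.

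The remaining analytic task is to verify (A1)--(A4) and to pin down the limiting variance. By Brownian scaling the whole cycle reduces to the threshold-$1$ strip-exit problem for a Brownian motion with drift $\hat{\mu}=c\mu$, whence $D_{i}(c)\ceq c^{2}$ and $Z_{i}(c)\ceq c$; (A1) and (A4) follow from this scaling and the existence of all exponential moments of the rescaled exit time, while (A2)--(A3) follow from the corresponding moment computation, which yields $\sigma^{2}=1/3$ \emph{independently of} $\mu$ (the drift enters only through $\hat{\mu}\to0$). Applying Fact \factref{AnscombeLike} then gives the marginal convergence of the second coordinate to $3^{-1/2}B$ in $\crt$.

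The main obstacle is the joint convergence, and specifically the independence of the two limits. Here I would run the vector-valued analogue of Fact \factref{AnscombeLike} for the bivariate reward $(\Delta_{i}(c),Z_{i}(c))$, where $\Delta_{i}(c)=W_{T_{D,i}^{c}}-W_{T_{D,i-1}^{c}}$; since $X$ moves by $O(c)$ over the incomplete cycle, $W_{t}=\sum_{i\le M_{c}(t)}\Delta_{i}(c)$ up to a uniformly vanishing error, so both coordinates are renewal sums driven by the \emph{same} counting process $M_{c}$ and converge jointly to a (possibly correlated) two-dimensional Gaussian process. Independence then follows from a symmetry argument: because $\text{TV}^{c}(f,\cdot)$ is invariant under $f\mapsto-f$ (by \eqref{utvDTV} and \eqref{TVisUTVDTV}), for $\mu=0$ the joint law of $\rbr{W_{\cdot},\TTV X{\cdot}c-\cdot/c}$ is invariant under $W\mapsto-W$ with the second coordinate unchanged; passing to the Gaussian limit forces the cross-covariance to equal its own negative, hence to vanish, and uncorrelated jointly Gaussian coordinates are independent. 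For $\mu\neq0$ the cross-covariance rate depends on $c$ only through $\hat{\mu}=c\mu$ and is continuous there, so it converges to its value at $\hat{\mu}=0$, namely $0$; this gives the independence of $W$ and $B$ in the limit and completes the proof.
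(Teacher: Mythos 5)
Your proposal is correct and follows the same skeleton as the paper's proof: the renewal decomposition of Theorem \ref{thm:JointStructure}, the i.i.d.\ cycle structure coming from the strong Markov property and spatial homogeneity, verification of (A1)--(A4), an application of Fact \ref{fact:AnscombeLike}, and control of the incomplete-cycle boundary terms and of the $O(c)t$ discrepancy between the centerings $m_{\mu}^{c}t$ and $t/c$. Where you genuinely diverge is in how the joint law and the independence of the two limits are obtained. The paper fixes $a,b\in\R$, applies the \emph{scalar} Fact \ref{fact:AnscombeLike} to the linear combination $Z_{i}(c)=aG_{i}(c)+bH_{i}(c)$ of the per-cycle $\text{TV}^{c}$-increment and position increment, and computes all moments explicitly from Taylor's Laplace transforms \eqref{eq:LaplaceTV1}--\eqref{eq:LaplaceTV2}; the variance rate comes out as $(a^{2}/3+b^{2})+\tfrac{4}{3}abc\mu+O(c^{2})$, so the cross term is visibly $O(c)$, independence is read off the formula, and tightness plus Cram\'er--Wold upgrades this to the bivariate functional statement. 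You instead run a bivariate renewal reward and kill the cross-covariance by the pathwise symmetry $\TTV{-f}{[0;t]}c=\TTV f{[0;t]}c$ at $\mu=0$ combined with Brownian scaling (all per-cycle rates depend on $c$ only through $\hat{\mu}=c\mu$) and continuity at $\hat{\mu}=0$. This is more conceptual --- it explains \emph{why} the limits decouple, and your continuity claim is confirmed by the paper's explicit cross term $\tfrac{4}{3}ab\hat{\mu}$ --- but it costs you a vector-valued version of Fact \ref{fact:AnscombeLike}, which the paper does not state and which is most naturally obtained by exactly the Cram\'er--Wold reduction the paper performs, plus a (routine, via uniform exponential moments of the rescaled cycle) justification that the per-cycle covariance rate is continuous at $\hat{\mu}=0$. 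Two small points to tighten: the identity $\TTV X{T_{U,k}^{c}}c=\sum_{i<k}G_{i}(c)$ is exact by Theorem \ref{thm:JointStructure}, so the sandwich you want follows from monotonicity in $t$ rather than from \eqref{inter2}--\eqref{inter3} applied to the cycles in isolation; and under $W\mapsto-W$ the roles of $T_{U}^{c}$ and $T_{D}^{c}$ swap, so the symmetry is cleanest when applied at the level of the limit law via the pathwise invariance of $\text{TV}^{c}$ rather than cycle by cycle.
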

\begin{proof}
We fix $a,b\in\R$ and define $A_{t}^{c}:=a\TTV Xtc+bX_{t}-\rbr{\frac{a}{c}+b\mu}t$.
Assume that we proved that 
\begin{equation}
A^{c}\conv^{d}\rbr{a^{2}/3+b^{2}}\tilde{B},\quad\text{ as }c\searrow0,\label{eq:tmpConvergence}
\end{equation}
weakly in topology of $\mathcal{C}([0;T],\R)$, where $\tilde{B}$
is some standard Brownian motion. The convergence for $(a,b)=(1,0)$
yields that $\cbr{\TTV Xtc-\frac{t}{c}}_{c>0}$ is tight, hence also
is the sequence of vectors on the left side of \eqref{BrownCLT}.
Now, applying the Cram�r-Wold device \cite[Theorem 7.7]{Billingsley:1968aa}
we easily justify that the convergence of finite-dimensional distributions,
hence (\ref{eq:BrownCLT}) indeed holds.

Now we are to prove (\ref{eq:tmpConvergence}). We transparently transfer
all quantities of Section \ref{sec:PropertiesTV} to the stochastic
setting by applying them in a pathwise fashion, i.e. $f\left(t\right)=X_{t}$.
We denote 
\[
G_{i}(c):=\rbr{M_{i}^{c}-m_{i}^{c}-c}+\rbr{M_{i}^{c}-m_{i+1}^{c}-c},\quad H_{i}(c):=\rbr{M_{i}^{c}-m_{i}^{c}-c}-\rbr{M_{i}^{c}-m_{i+1}^{c}-c}.
\]
By \thmref{JointStructure} and continuity of X we have $\TTV X{T_{U,k}^{c}}c=\sum_{i=0}^{k-1}Y_{i}(c)$
(in fact this holds under additional assumption (\ref{eq:firstUpAssumption})
but this is irrelevant in the limit). By (\ref{eq:approximationBound})
and again by \thmref{JointStructure} we have 
\begin{equation}
\norm{X_{T_{U,k}^{c}}-\sum_{i=0}^{k-1}H_{i}(c)}{\infty}{}\leq c,\quad\text{a.s.}\label{eq:goodApproximation}
\end{equation}
(Note that $X_{0}=0.$) We fix some $a,b\in\R$ and for any $i\geq0$
write
\begin{equation}
Z_{i}(c):=aG_{i}(c)+bH_{i}(c).\label{eq:zi}
\end{equation}
We denote also
\begin{equation}
D_{i}(c):=T_{U,i}^{c}-T_{U,i-1}^{c},\quad i\geq1,\text{ and }\: D_{0}(c):=T_{U,0}^{c}.\label{eq:di}
\end{equation}
The following simple observation will be crucial for the further proof.
Let us notice that by the strong Markov property of $X$ and its space
homogeneity we have that $\cbr{Z_{i}(c)}_{i\geq1}$ and $\cbr{D_{i}(c)}_{i\geq1}$
are i.i.d. sequences. For $i=0$ the distributions are different because
of {}``starting conditions''. The first part, i.e. the values for
$i=0$ disappear in the limit. For notational simplicity from now
on, we will implicitly assume that $i\geq1$. 

We will proceed now in the direction of utilizing Fact \ref{fact:AnscombeLike}.
To do this, we need to calculate moments, fortunately enough \cite{Taylor:1975kx}
provides us with sufficient tools. Using the notation from \cite{Taylor:1975kx}
we may write 
\[
(T_{D,i}^{c}-T_{U,i}^{c},M_{i}^{c}-m_{i}^{c}-c)=^{d}(T_{c},X(T_{c})+c),
\]
where $T_{c},X$ are defined in \cite[Introduction]{Taylor:1975kx}.
Hence the formula \cite[(1.1)]{Taylor:1975kx} reads as 
\begin{equation}
\ev{\exp(\alpha\rbr{M_{i}^{c}-m_{i}^{c}-c}-\beta\rbr{T_{D,i}^{c}-T_{U,i}^{c}})}=\frac{\delta\exp(-(\alpha+\mu)c)\exp(\alpha c)}{\delta\cosh(\delta c)-(\alpha+\mu)\sinh(\delta c)},\label{eq:LaplaceTV1}
\end{equation}
where $\delta=\sqrt{\mu^{2}+2\beta}$. This formula is valid if $\alpha<\delta\coth(\delta c)-\mu$
and $\beta>0$. If $\mu\neq0$ we may also put $\beta=0.$ One may
check that the pair 
\[
\rbr{T_{U,i+1}^{c}-T_{D,i}^{c},M_{i}^{c}-m_{i+1}^{c}-c}
\]
is independent of $(T_{D,i}^{c}-T_{U,i}^{c},M_{i}^{c}-m_{i}^{c}-c)$.
It becomes obvious when one recalls definitions of Section \ref{sec:PropertiesTV}
((\ref{eq:defmi}) and (\ref{eq:defMi}) in particular) and apply
the strong Markov property of $X$. Moreover, we notice that the law
of $\rbr{T_{U,i+1}^{c}-T_{D,i}^{c},M_{i}^{c}-m_{i+1}^{c}-c}$ is the
same as the one of $(T_{D,i}^{c}-T_{U,i}^{c},M_{i}^{c}-m_{i}^{c}-c)$
if we change the drift coefficient to $-\mu$. Therefore, by \cite[(1.1)]{Taylor:1975kx}
we get 
\begin{equation}
\ev{\exp(\alpha\rbr{M_{i}^{c}-m_{i+1}^{c}-c}-\beta\rbr{T_{U,i+1}^{c}-T_{D,i}^{c}})}=\frac{\delta\exp(-(\alpha-\mu)c)\exp(\alpha c)}{\delta\cosh(\delta c)-(\alpha-\mu)\sinh(\delta c)},\label{eq:LaplaceTV2}
\end{equation}
where $\delta=\sqrt{\mu^{2}+2\beta}$ (with the same restrictions
as before). These are enough information to check the moment conditions
required in Fact \ref{fact:AnscombeLike}. Calculations are easy and
straightforward however lengthy. We decided not to include all of
them in the paper. Instead, we list crucial steps and provide the
reader with the Mathematica notebook with all details%
\footnote{\texttt{\url{http://www.mimuw.edu.pl/~pmilos/moments.nb}.} The file can be viewed with a free application available on \texttt{\url{http://www.wolfram.com/products/player/}.}}. 
Combining the above equations and putting $\alpha=0$ (note that
this is always possible for $c$'s small enough) we get
\[
\ev{\exp(-\beta D_{i}(c))}=\frac{2\beta+\mu^{2}}{\beta+\mu^{2}+\beta\cosh\rbr{2c\sqrt{2\beta+\mu^{2}}}}.
\]
Differentiation yields 
\begin{equation}
\ev{D_{i}(c)}=\frac{2\sinh(c\mu)^{2}}{\mu^{2}}=2c^{2}+O(c^{4}).\label{eq:expectanceD}
\end{equation}
One can check that the formula above is valid for $\mu=0$ when we
take the limit. This applies also to the subsequent moments formulae.
Moreover 
\[
\ev{D_{i}(c)^{2}}=\frac{16}{3}c^{4}+O(c^{6}),\quad\ev{D_{i}(c)^{4}}=\frac{7936}{105}c^{8}+O(c^{10}).
\]
This is enough to check conditions (A1) of Fact \ref{fact:AnscombeLike}
as well as (A4) with $\delta=3$. Analogously, by putting $\beta=0$
we calculate that 
\begin{equation}
\ev{\exp(\alpha Z_{i}(c))}=\frac{4\mu^{2}}{\left((a-b)\left(1-e^{-2c\mu}\right)\alpha-2\mu\right)\left((a+b)\left(1-e^{2c\mu}\right)\alpha+2\mu\right)}.\label{eq:laplaceZ}
\end{equation}
Again, by differentiation one gets
\begin{equation}
\ev{Z_{i}(c)}=\frac{2\sinh(c\mu)(a\cosh(c\mu)+b\sinh(c\mu))}{\mu}.\label{eq:expectanceZ}
\end{equation}
And therefore 
\begin{equation}
\frac{\ev{Z_{i}(c)}}{\ev{D_{i}(c)}}=\mu(b+a\coth(c\mu))=\frac{a}{c}+b\mu+O(c).\label{eq:drift}
\end{equation}
Now we have
{\tiny
\begin{multline*}
\ev{\exp\rbr{\alpha Z_{i}(c)-\beta D_{i}(c)}}=\\
\frac{2\left(2\beta+\mu^{2}\right)}{-a^{2}\alpha^{2}+b^{2}\alpha^{2}+2b\alpha\mu+2\left(\beta+\mu^{2}\right)+\left(a^{2}\alpha^{2}+2\beta-b\alpha(b\alpha+2\mu)\right)\cosh(2c\sqrt{2\beta+\mu^{2}})-2a\alpha\sqrt{2\beta+\mu^{2}}\sinh(2c\sqrt{2\beta+\mu^{2}})}.
\end{multline*}
}
Following axiom (A2) we denote $X_{i}(c):=Z_{i}(c)-(\ev{Z_{1}(c)}/\ev{D_{1}(c)})D_{i}(c)$

Using this one may check that 
\[
\ev{X_{i}(c)^{2}}=\frac{3a^{2}-b^{2}-4abc\mu+\left(a^{2}+b^{2}\right)\cosh(2c\mu)-4a^{2}c\mu\coth(c\mu)+2ab\sinh(2c\mu)}{\mu^{2}}.
\]
Now it is straightforward to check (A2) of Fact \ref{fact:AnscombeLike},
viz.
\begin{multline*}
\frac{\ev{X_{i}(c)^{2}}}{\ev{D_{1}}}=\frac{1}{2}\text{cshs}(c\mu){}^{2}\left(3a^{2}-b^{2}-4abc\mu+\left(a^{2}+b^{2}\right)\cosh(2c\mu)\right.\\\left.-4a^{2}c\mu\coth(c\mu)+2ab\sinh(2c\mu\right) =\left(\frac{a^{2}}{3}+b^{2}\right)+\frac{4}{3}abc\mu+O(c^{2}).
\end{multline*}
Finally, one can check that $\ev{X_{i}(c)^{4}}\cleq c^{4}$ and hence
(A3) is verified with $\delta=2$. Having checked all conditions we
conclude that for $P_{c}(t)$ defined by (\ref{eq:pct}) and (\ref{eq:zi}),
(\ref{eq:di}) we have 
\[
P_{c}(t)-\rbr{\frac{a}{c}+b\mu}t\conv^{d}\left(\frac{a^{2}}{3}+b^{2}\right)^{1/2}\tilde{B},\quad\text{as }c\searrow0.
\]
Therefore in order to prove (\ref{eq:tmpConvergence}) it is enough
to to show that $P_{c}(t)-a\TTV Xtc-bX_{t}\conv^{d}0$. By the property
\eqref{goodApproximation} and the continuity of $X$ it follows easily
that it suffices to concentrate on the case $(a,b)=(1,0)$, that is
$A_{t}=\TTV Xtc$. Since $D_{0}(c)$ has different distribution than
$D_{i}(c)$ for $i\geq1$ we introduce two auxiliary objects 
\[
\tilde{M}_{c}(t):=\min\cbr{n\geq0:\sum_{i=0}^{n}D_{i}(c)>t},\quad\tilde{S}_{c}(n)=\sum_{i=0}^{n}Z_{i}(c),
\]
and 
\[
\tilde{P}_{c}(t):=\tilde{S}_{c}(\tilde{M}_{c}(t)).
\]
This differs slightly from $P_{c}$, however, one easily checks that
$\tilde{P}_{c}-P_{c}\conv^{d}0$. By \thmref{JointStructure} we see
that the processes $\TTV Xtc$ and $\tilde{S}_{c}\rbr{\tilde{M}_{c}(t)}$
coincide at random times $T_{U,i}^{c}$, $i\geq1$ moreover, both
are increasing, hence, for any $T\geq0$ and $\varepsilon>0$ 
\[
\pr{\sup_{t\in\left[0,T\right]}\left|\TTV Xtc-\tilde{S}_{c}(\tilde{M}_{c}(t))\right|>\varepsilon}\leq\pr{\sup_{t\in[0;T]}Z_{\tilde{M}_{c}(t)}(c)>\varepsilon}.
\]
Using this we estimate 
\begin{multline*}
\pr{\sup_{t\in\left[0,T\right]}\left|TV_{c}^{\mu}\left(t\right)-\tilde{S}_{c}\rbr{\tilde{M}_{c}(t)}\right|>\varepsilon}\\
\leq\pr{\max_{k\leq2T/\ev D_{1}\left(c\right)+1}Z_{k}\left(c\right)\geq\varepsilon}+\pr{\tilde{M}_{c}\left(T\right)\geq\frac{2T}{\ev D_{1}\left(c\right)}+1}.
\end{multline*}
 The first term could be estimated by the Chebyshev inequality and
the estimates of $\ev{Z_{1}(c)^{4}}$ and $\ev{D_{1}(c)}$ 
\[
\pr{\max_{k\leq2T/\ev{D_{1}(c)}+1}|Z_{i}(c)|>\varepsilon}\leq\rbr{\frac{2T}{\ev{D_{1}(c)}}+1}\frac{\ev{Z_{1}(c)^{4}}}{\varepsilon^{4}}\conv0,\quad\text{as}\: c\conv0.
\]
 The convergence of the second term to $0$ could be established by
Fact \ref{fact:AnscombeLike}.
\end{proof}

\subsection{Proof for diffusions with $\sigma=const$}

We start with a yet simpler case. Namely, let $W$ be a standard Brownian
motion and $X$ be a random variable. Let us define process $Z$ by
\[
Z_{t}:=W_{t}+Xt,\quad t\geq0.
\]

\begin{lem}
\label{lem:StochasticDrift}Let $T>0$. Let us assume that that $W$
and X are independent then 
\[
\rbr{X,W,\TTV Ztc-\frac{t}{c}}\conv^{d}\rbr{X,W,3^{-1/2}B},\text{ as }c\searrow0.
\]
where $B$ is a standard Brownian motion and $X,W,B$ are independent.
The convergence is understood in weak sense in the product topology
of $\R\times\crt^{2}$.\end{lem}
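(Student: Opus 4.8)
The plan is to deduce the statement for a random (but $W$-independent) drift $X$ from the deterministic-drift result of Lemma~\ref{lem:BMquadruple} by conditioning on $X$ and then applying a standard disintegration-of-measure argument. The key observation is that, conditionally on $\{X=\mu\}$, the process $Z_t=W_t+Xt$ is \emph{exactly} a Wiener process with deterministic drift $\mu$, so Lemma~\ref{lem:BMquadruple} applies path-wise for each fixed value of the drift. I would therefore first record the conditional version of the previous result: for each fixed $\mu\in\R$,
\[
\rbr{W_t,\TTV Ztc-\frac{t}{c}}\conv^{d}\rbr{W_t,3^{-1/2}B_t},\quad\text{as }c\searrow0,
\]
where on the right $W$ and $B$ are independent standard Brownian motions, and crucially the limit law does \emph{not} depend on $\mu$.

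The main step is to upgrade this conditional convergence to the joint convergence of the triple $\rbr{X,W,\TTV Ztc-t/c}$. The natural tool is to test against bounded continuous functionals. For $\Phi$ a bounded continuous functional on $\R\times\crt^{2}$ I would write
\[
\ev{\Phi\rbr{X,W,\TTV Ztc-\tfrac{t}{c}}}=\ev{\ev{\Phi\rbr{X,W,\TTV Ztc-\tfrac{t}{c}}\mid X}},
\]
and then argue that the inner conditional expectation converges, for (almost) every value of $X$, to $\ev{\Phi\rbr{x,W,3^{-1/2}B}}$ evaluated at that value $x$. Since the limiting functional $x\mapsto\ev{\Phi(x,W,3^{-1/2}B)}$ is bounded and, by dominated convergence, the inner expectations are uniformly bounded, I can pass the limit through the outer $\ev{\cdot}$ by the bounded convergence theorem. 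This yields
\[
\ev{\Phi\rbr{X,W,\TTV Ztc-\tfrac{t}{c}}}\conv\ev{\Phi\rbr{X,W,3^{-1/2}B}},
\]
which is exactly weak convergence of the triple, with $B$ independent of $(X,W)$ precisely because the conditional limit law is free of $\mu$.

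The point requiring care — and the place I expect the main obstacle — is making the conditioning rigorous as a statement about weak convergence, since weak convergence of the conditional laws must hold \emph{uniformly enough} in the drift parameter to survive integration against the law of $X$. The clean way to handle this is to appeal to a regular conditional distribution: because $W$ is independent of $X$, the conditional law of $\rbr{W,\TTV Ztc-t/c}$ given $X=\mu$ is literally the unconditional law computed for the deterministic-drift process $W_t+\mu t$, so Lemma~\ref{lem:BMquadruple} gives pointwise-in-$\mu$ convergence of these regular conditional laws to a fixed limit. One should check that $\mu\mapsto\ev{\Phi(\mu,W,\TTV{(W_\cdot+\mu\cdot)}{c}{\cdot}-\cdot/c)}$ is measurable (which follows from joint measurability of the construction in Section~\ref{sec:PropertiesTV}) so that the outer integral and the bounded convergence step are justified. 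Once this measurable-selection/disintegration bookkeeping is in place, no further estimates are needed and the result follows; the genuine analytic content has already been absorbed into Lemma~\ref{lem:BMquadruple}.
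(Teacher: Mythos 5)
Your proposal is correct and follows essentially the same route as the paper: condition on $X=x$, use independence of $W$ and $X$ to reduce to the deterministic-drift case of Lemma~\ref{lem:BMquadruple} (whose limit law does not depend on the drift), and interchange the limit with the outer expectation by dominated convergence. The paper's proof is just the terse version of this chain of equalities; your additional remarks on regular conditional distributions and measurability are the bookkeeping the paper leaves implicit.
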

\begin{proof}
We will proceed by the very definition of the weak convergence. Let
$f:\R\times\mathcal{C}([0;T],\R)^{2}\mapsto\R$ be a bounded continuous
function. We have 
\begin{multline*}
\lim_{c\searrow0}\ev{}f\rbr{X,W,\TTV Ztc-\frac{t}{c}}=\lim_{c\searrow0}\ev{}\ev{}\rbr{\left.f\rbr{x,W,\TTV Ztc-\frac{t}{c}}\right|X=x}\\
=\ev{\lim_{c\searrow0}\ev{}\rbr{\left.f\rbr{x,W,\TTV Ztc-\frac{t}{c}}\right|X=x}}\\=\ev{\ev{}\rbr{\left.f\rbr{x,W,3^{-1/2}B}\right|X=x}}=\ev{}f\rbr{X,W,3^{-1/2}B}.
\end{multline*}
where we used \lemref{BMquadruple} and the Lebesgue dominated convergence
theorem. 
\end{proof}
We will deal now with diffusion given by an equation 
\begin{equation}
dX_{t}=\dd W_{t}+\mu(X_{t})\dd t,\quad X_{0}=0,\label{eq:simplifiedDiffusion}
\end{equation}
 i.e. we set $\sigma\equiv1$ in \eqref{diffusionDef}. We assume
also that $\mu$ is bounded and Lipschitz. This process is essentially
a Brownian motion with {}``a variable drift''. We denote
\begin{equation}
\mu^{*}=\sup_{x\in\R}|\mu(x)|<+\infty.\label{eq:mubound}
\end{equation}
We will us the discretion technique. To this end we need to be able
to control the increments of $X$. The following simple lemma is the
first, most crude step of our analysis 
\begin{lem}
\label{lem:CrudeEstimate}Let $t\geq0$ and $\delta>0$ then for any
$b>0$ we have 
\[
\pr{\sup_{s\in[t;t+\delta]}|X_{s}-X_{t}|\geq(\mu^{*}+b)\delta}\leq2\exp\rbr{-b^{2}\delta/2}.
\]
\end{lem}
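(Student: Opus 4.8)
The plan is to reduce the statement to a standard tail bound for the supremum of a Brownian motion, treating the drift as a deterministic perturbation bounded by $\mu^{*}\delta$. First I would use the integral form of \eqref{simplifiedDiffusion}: for every $s\in[t;t+\delta]$,
\[
X_{s}-X_{t}=\rbr{W_{s}-W_{t}}+\int_{t}^{s}\mu(X_{u})\,\dd u .
\]
By \eqref{mubound} the drift part is controlled uniformly, $\left|\int_{t}^{s}\mu(X_{u})\,\dd u\right|\le\mu^{*}(s-t)\le\mu^{*}\delta$, so taking the supremum over $s\in[t;t+\delta]$ and applying the triangle inequality gives
\[
\sup_{s\in[t;t+\delta]}|X_{s}-X_{t}|\le\mu^{*}\delta+\sup_{s\in[t;t+\delta]}|W_{s}-W_{t}| .
\]
Consequently the event $\cbr{\sup_{s\in[t;t+\delta]}|X_{s}-X_{t}|\ge(\mu^{*}+b)\delta}$ is contained in $\cbr{\sup_{s\in[t;t+\delta]}|W_{s}-W_{t}|\ge b\delta}$, whence
\[
\pr{\sup_{s\in[t;t+\delta]}|X_{s}-X_{t}|\ge(\mu^{*}+b)\delta}\le\pr{\sup_{s\in[t;t+\delta]}|W_{s}-W_{t}|\ge b\delta}.
\]

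Next I would exploit that $\cbr{W_{t+r}-W_{t}}_{r\ge0}$ is again a standard Brownian motion, so the right-hand side equals $\pr{\sup_{r\in[0;\delta]}|\tilde{W}_{r}|\ge b\delta}$ for a standard Brownian motion $\tilde{W}$. Splitting into the upward and downward parts and using that $-\tilde{W}$ is also a Brownian motion,
\[
\pr{\sup_{r\in[0;\delta]}|\tilde{W}_{r}|\ge b\delta}\le 2\,\pr{\sup_{r\in[0;\delta]}\tilde{W}_{r}\ge b\delta},
\]
and then the reflection principle $\pr{\sup_{r\le\delta}\tilde{W}_{r}\ge a}=2\,\pr{\tilde{W}_{\delta}\ge a}$ reduces everything to a one-dimensional Gaussian tail. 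Since $\tilde{W}_{\delta}$ is $N(0,\delta)$, the sharp bound $\pr{\tilde{W}_{\delta}\ge a}=\pr{N(0,1)\ge a/\sqrt{\delta}}\le\tfrac12\exp\rbr{-a^{2}/(2\delta)}$ with $a=b\delta$ gives $\pr{\tilde{W}_{\delta}\ge b\delta}\le\tfrac12\exp\rbr{-b^{2}\delta/2}$, and combining the factors yields exactly $2\exp\rbr{-b^{2}\delta/2}$.

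The estimate is deliberately crude, so there is no genuine obstacle; the only point requiring attention is bookkeeping of the constant. The factor $2$ in the assertion is obtained precisely by pairing the factor $4$ produced by the two-sided reflection principle with the sharp one-half in the Gaussian tail bound $\pr{N(0,1)\ge x}\le\tfrac12 e^{-x^{2}/2}$ valid for $x\ge0$; a cruder Chernoff-type tail estimate would only yield a worse numerical constant, which is why I would insist on the sharp form here.
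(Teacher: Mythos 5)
Your proof is correct and follows essentially the same route as the paper: reduce to the supremum of the Brownian increment by bounding the drift term by $\mu^{*}\delta$, then apply the standard exponential bound $\pr{\sup_{r\le\delta}|\tilde{W}_{r}|\ge a}\le 2\exp\rbr{-a^{2}/(2\delta)}$ (which the paper simply cites from Revuz--Yor, while you rederive it via the reflection principle and the sharp Gaussian tail). The constant bookkeeping checks out.
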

\begin{proof}
We know that 
\[
X_{t}=X_{0}+W_{t}+\int_{0}^{t}\mu(X_{s})\dd s.
\]
Hence, we have $X_{s}-X_{t}\in(W_{s}-W_{t}-\mu^{*}(s-t),W_{s}-W_{t}+\mu^{*}(s-t))$.
Now the lemma follows by \cite[Proposition II.1.8]{Revuz:1991kx}. 
\end{proof}
Let us fix $T>0,n=1,2,...$ and denote $t_{i}^{n}:=i\frac{T}{n},i\in\cbr{0,1,\ldots,n}$.
We define the {}``approximated'' truncated variation process by
\begin{equation}
ATV^{n,c}(t):=\sum_{i=0}^{\lfloor nt\rfloor-1}\TTV X{[t_{i}^{n};t_{i+1}^{n}]}c+\TTV X{[t_{\lfloor nt\rfloor}^{n};t]}c,\label{eq:atv}
\end{equation}
Its name is justified by 
\begin{lem}
\label{lem:variationsSum}We have

\[
ATV^{n,c}(t)-\TTV Xtc\conv0,\:\text{a.s.}\quad\text{when }c\searrow0,
\]
 and the convergence is understood in $\mathcal{C}([0;T],\R)$ topology.\end{lem}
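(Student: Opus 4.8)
The goal is to show that the piecewise-approximated truncated variation $ATV^{n,c}$ converges to the true $\TTV Xtc$ almost surely, uniformly on $[0;T]$, as $c\searrow 0$. The plan is to control the difference between summing the truncated variation over the grid $\{t_i^n\}$ and computing it over the whole interval $[0;t]$. The natural tool is the subadditivity/superadditivity pair \eqref{inter2} and \eqref{inter3}: from \eqref{inter2} iterated over the grid points we get $ATV^{n,c}(t)\geq\TTV Xtc$ is false in general, so instead I would use \eqref{inter3}, which shows that splitting the interval costs at most $c$ per split point. Iterating \eqref{inter3} across the $\lfloor nt\rfloor$ grid points lying in $[0;t]$ yields
\[
0\leq ATV^{n,c}(t)-\TTV Xtc\leq \lfloor nt\rfloor\, c\leq n T c,
\]
where the lower bound comes from \eqref{inter2}. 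This already gives a uniform-in-$t$ bound on $[0;T]$.

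The subtlety is the order of limits: the lemma sends $c\searrow 0$ with $n$ fixed, but the crude bound $nTc$ does tend to $0$ as $c\searrow 0$ for each fixed $n$. So the estimate
\[
\sup_{t\in[0;T]}\bigl|ATV^{n,c}(t)-\TTV Xtc\bigr|\leq nTc\longrightarrow 0,\quad\text{as }c\searrow 0,
\]
holds deterministically (pathwise), for every fixed $n$. First I would state this clean two-sided inequality, deriving the upper bound by applying \eqref{inter3} successively at the points $t_1^n,\dots,t_{\lfloor nt\rfloor}^n$ (each application introduces one additive error term $c$, and there are at most $\lfloor nt\rfloor\leq nT$ of them), and the lower bound directly from the superadditivity \eqref{inter2} applied at the same points, noting that the final partial interval $[t_{\lfloor nt\rfloor}^n;t]$ is handled by the same inequalities.

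The only genuine point requiring care is uniformity in $t$ and the fact that the convergence is in the $\mathcal{C}([0;T],\R)$ topology, i.e. in the sup-norm. Since the bound $nTc$ is independent of $t$, the sup-norm of the difference is bounded by $nTc$, so uniform convergence follows immediately; there is no need to invoke continuity of the limit or any tightness argument. I would therefore not expect any real obstacle here: the lemma is essentially a bookkeeping consequence of the almost-superadditivity \eqref{inter2}--\eqref{inter3}, and the "almost sure" qualifier is actually a pathwise deterministic statement valid for every continuous (indeed every c\`{a}dl\`{a}g) path of $X$. The mild care needed is only to make sure the boundary term $\TTV X{[t_{\lfloor nt\rfloor}^n;t]}c$ in the definition \eqref{eq:atv} is correctly absorbed when iterating the inequalities, which is routine.
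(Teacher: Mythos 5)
Your argument is exactly the paper's: bound $\left|ATV^{n,c}(t)-\TTV Xtc\right|$ deterministically and uniformly in $t$ by $nTc$ via the almost-super/subadditivity pair (\ref{inter2})--(\ref{inter3}), then let $c\searrow0$ with $n$ fixed. The one slip is the orientation of your displayed chain: (\ref{inter2}) gives $ATV^{n,c}(t)\leq\TTV Xtc$ (so the difference is $\leq0$, not $\geq0$), while (\ref{inter3}) gives $\TTV Xtc-ATV^{n,c}(t)\leq nc$, so the correct two-sided bound is $-nTc\leq ATV^{n,c}(t)-\TTV Xtc\leq0$; since only the absolute bound $nTc$ is used, the conclusion is unaffected.
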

\begin{proof}
By (\ref{inter2}) one easily verifies that $ATV^{n,c}(t)\leq\TTV Xtc$.
On the other hand, by (\ref{inter3}), $\TTV Xtc-ATV^{n,c}(t)\leq nc$,
for any $t\in[0;T]$. 
\end{proof}
We will take now a detour of the main flow of the proof in order to
collect weak convergence facts used below. First we recall the Prokhorov
metric. Let $(S,d)$ be a metric space and $\mathcal{P}(S)$ be
the space of Borel probability measures on $S$. We topologise $\mathcal{P}(S)$
with the Prokhorov metric
\begin{equation}
d_{P}(P,Q):=\inf\cbr{\epsilon>0:P(F)\leq Q(F^{\epsilon})+\epsilon,\quad\text{for all closed }F\subset S},\label{eq:prohorovMetric}
\end{equation}
in the above expression $F^{\epsilon}:=\cbr{x\in S:\inf_{y\in F}d(x,y)<\epsilon}.$
It is well-known that when $(S,d)$ is separable then convergence
with respect to $\pm{\cdot,\cdot}$ is equivalent to weak convergence.
We refer the reader to \cite[Chapter 3]{Ethier:1986uq} and \cite[Theorem 3.3.1]{Ethier:1986uq}
in particular. Given two random variables $X,Y$ with values in the
same space we will write 
\[
\pm{X,Y}:=\pm{\mathcal{L}(X),\mathcal{L}(Y)},
\]
where $\mathcal{L}(X)$ denotes the law of $X$.

In some parts of our analysis we will need the space of c\`{a}dl\`{a}g functions
$\mathcal{D}([0;T],\R)$ introduced by (\ref{eq:cadlagSpace}). We
will also use the following product space
\begin{equation}
\mathcal{C}\times\mathcal{D}:=\crt\times\mathcal{D}([0;T],\R),\label{eq:CDspace}
\end{equation}
always with the norm given by $\norm{(f,g)}{}{}:=\norm f{\infty}{}+\norm g{\infty}{}$.
\begin{lem}
\label{lem:TVEstimate}Let $(X,Y)$ be random variables with values
in $\mathcal{C}\times\mathcal{D}$, moreover let $A$ be an event.
Then
\[
\pm{(X,Y),(X1_{A},Y)}\leq2(1-\pr A).
\]
\end{lem}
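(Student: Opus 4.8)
The goal is to bound the Prokhorov distance between the laws of $(X,Y)$ and $(X1_A,Y)$ by $2(1-\pr A)$. The plan is to construct an explicit coupling of the two random variables and then estimate the Prokhorov distance directly from the coupling by controlling the probability that the two coupled versions differ. Since both $(X,Y)$ and $(X1_A,Y)$ are built from the same underlying randomness, I would use the \emph{same} probability space: the natural coupling where the first coordinate is $X$ in one case and $X1_A$ in the other, while the second coordinate $Y$ is shared. On the event $A$ the two vectors agree in the first coordinate (since $X1_A=X$ there) and always agree in the second coordinate, so the pair $((X,Y),(X1_A,Y))$ coincides on $A$ and can differ only on the complement $A^c$, whose probability is $1-\pr A$.

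First I would recall the definition \eqref{prohorovMetric} of the Prokhorov metric and the elementary fact that for two $S$-valued random variables $U,V$ defined on a common space, one has $\pm{U,V}\leq\pr{U\neq V}$; this is the standard coupling bound for the Prokhorov (equivalently Ky Fan) distance, and it follows immediately by taking any closed set $F$ and writing $\pr{U\in F}\leq\pr{V\in F}+\pr{U\neq V}$, so that $\epsilon=\pr{U\neq V}$ is admissible in the infimum defining $d_P$. Applying this with $U=(X,Y)$ and $V=(X1_A,Y)$, the coupling bound gives $\pm{(X,Y),(X1_A,Y)}\leq\pr{(X,Y)\neq(X1_A,Y)}\leq\pr{A^c}=1-\pr A$.

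This already yields the slightly stronger bound $1-\pr A$, and hence the claimed inequality with the harmless factor $2$. The reason the statement is phrased with the constant $2$ is presumably that the authors do not wish to invoke the sharp coupling characterization of the Prokhorov metric and instead argue more crudely and symmetrically; in that case one estimates both inclusions in \eqref{prohorovMetric} separately, namely $\pr{U\in F}\leq\pr{V\in F^\epsilon}+\epsilon$ and the reverse, each time absorbing the discrepancy event of probability $1-\pr A$, which produces the factor $2$. Either route is routine.

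The only mild subtlety, and the step I would present most carefully, is the verification that the natural coupling really places the two laws on one probability space in a way that makes $\cbr{U\neq V}\subseteq A^c$: one must check that $(X,Y)$ and $(X1_A,Y)$ differ only through the indicator $1_A$ multiplying $X$, so that equality holds pointwise on $A$. This is immediate from $X1_A=X$ on $A$, but it is worth stating explicitly since $Y$ is unchanged and the product space carries the norm $\norm{(f,g)}{}{}=\norm f{\infty}{}+\norm g{\infty}{}$ from \eqref{CDspace}, under which the two points coincide exactly when both coordinates agree. There is no real obstacle here; the lemma is a soft measure-theoretic bookkeeping fact whose entire content is the coupling inequality for the Prokhorov metric.
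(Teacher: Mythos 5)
Your proof is correct and is essentially the paper's argument: the paper simply cites the coupling characterization of the Prokhorov metric (\cite[Theorem 3.1.2]{Ethier:1986uq}) applied to the natural coupling $\mu=\mathcal{L}\bigl((X,Y),(X1_{A},Y)\bigr)$, which is exactly the bound $\pm{U,V}\leq\pr{U\neq V}\leq 1-\pr A$ that you derive by hand. Your explicit verification that the coupled pair can differ only on $A^{c}$, and your remark that the factor $2$ is harmless slack, match the intended reasoning.
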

\begin{proof}
It is enough to apply \cite[Theorem 3.1.2]{Ethier:1986uq} with $\mu=\mathcal{L}((X,Y),(X1_{A},Y))$.\end{proof}
\begin{lem}
\label{lem:CballProch}Let $X:=(X_{1},X_{2})$ and $Y:=(Y_{1},Y_{2})$
be a random variable with values in $\mathcal{C}\times\mathcal{D}$
such that 
\[
\pr{\norm{X_{1}-Y_{1}}{\infty}{}\geq\epsilon/2}\leq\epsilon/2\:\text{ and }\:\pr{\norm{X_{2}-Y_{2}}{\infty}{}\geq\epsilon/2}\leq\epsilon/2,
\]
then 
\[
\pm{X,Y}\leq\epsilon.
\]
\end{lem}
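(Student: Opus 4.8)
The plan is to recognize that $X=(X_1,X_2)$ and $Y=(Y_1,Y_2)$, being defined on a common probability space, already constitute a coupling of their two laws on $\mathcal{C}\times\mathcal{D}$, and then to invoke the standard coupling estimate for the Prokhorov metric. Recall that the metric induced by the norm introduced after \eqref{CDspace} is $d\rbr{(f_1,f_2),(g_1,g_2)}:=\norm{f_1-g_1}{\infty}{}+\norm{f_2-g_2}{\infty}{}$. So the whole statement reduces to bounding $\pr{d(X,Y)\geq\epsilon}$ and feeding it into the definition \eqref{prohorovMetric}.

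First I would record the elementary coupling bound: for any two random variables $U,V$ on a common space taking values in a metric space $(S,d)$, and any closed $F\subseteq S$,
\[
\pr{U\in F}\leq\pr{U\in F,\,d(U,V)<\epsilon}+\pr{d(U,V)\geq\epsilon}\leq\pr{V\in F^{\epsilon}}+\pr{d(U,V)\geq\epsilon},
\]
where the last inequality uses that on the event $\cbr{U\in F,\,d(U,V)<\epsilon}$ one has $\inf_{y\in F}d(V,y)\leq d(V,U)<\epsilon$, i.e. $V\in F^{\epsilon}$. Consequently, if $\pr{d(U,V)\geq\epsilon}\leq\epsilon$, then $\pr{U\in F}\leq\pr{V\in F^{\epsilon}}+\epsilon$ for every closed $F$, and the definition \eqref{prohorovMetric} of the Prokhorov metric gives $\pm{U,V}\leq\epsilon$.

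It then remains to verify the hypothesis $\pr{d(X,Y)\geq\epsilon}\leq\epsilon$ for the coupling at hand. Here I would use the trivial observation that if a sum of two nonnegative reals is at least $\epsilon$, then at least one summand is at least $\epsilon/2$, which yields the deterministic inclusion
\[
\cbr{d(X,Y)\geq\epsilon}\subseteq\cbr{\norm{X_1-Y_1}{\infty}{}\geq\epsilon/2}\cup\cbr{\norm{X_2-Y_2}{\infty}{}\geq\epsilon/2}.
\]
A union bound together with the two assumed inequalities then gives $\pr{d(X,Y)\geq\epsilon}\leq\epsilon/2+\epsilon/2=\epsilon$, and combining this with the coupling bound of the previous paragraph yields $\pm{X,Y}\leq\epsilon$, as required.

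I do not expect any real obstacle: the argument is a one-line set inclusion plus a union bound, and the only imported fact is the coupling characterization of the Prokhorov distance, which follows directly from \eqref{prohorovMetric}. The single point to keep straight is the strict-versus-nonstrict convention: since $F^{\epsilon}$ is defined with the strict inequality $\inf_{y\in F}d(x,y)<\epsilon$, I pair it with the event $\cbr{d(X,Y)<\epsilon}$ and relegate the complementary event $\cbr{d(X,Y)\geq\epsilon}$ to the error term, which is exactly the form in which the two hypotheses (stated with $\geq$) appear.
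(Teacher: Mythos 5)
Your argument is correct and is essentially the proof given in the paper: the paper likewise applies the union bound $\pr{\norm{(X_{1}-Y_{1},X_{2}-Y_{2})}{}{}\geq\epsilon}\leq\epsilon$ and then invokes the coupling characterization of the Prokhorov metric (citing Ethier--Kurtz, Theorem 3.1.2) exactly where you derive that characterization inline from the definition \eqref{prohorovMetric}. The only difference is that you prove the imported coupling bound rather than citing it, which is a harmless elaboration.
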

\begin{proof}
We calculate 
\[
\pr{\norm{(X_{1}-Y_{1},X_{2}-Y_{2})}{}{}\geq\epsilon}\leq\pr{\norm{X_{1}-Y_{1}}{}{}\geq\epsilon/2}+\pr{\norm{X_{2}-Y_{2}}{}{}\geq\epsilon}\leq\epsilon,
\]
now the proof follows directly by application of \cite[Theorem 3.1.2]{Ethier:1986uq}.
\end{proof}
We are ready to prove the main result of this part of the proof which
is an upgrade of Lemma \ref{lem:BMquadruple} to {}``simplified diffusions''
given by \eqref{simplifiedDiffusion}. 
\begin{fact}
\label{fact:SimpleMainThm}Let $T>0$. We have 
\begin{equation}
\rbr{X,\TTV Xtc-\frac{t}{c}}\conv^{d}(X,3{}^{-1/2}B),\text{ as }c\searrow0,\label{eq:simpleDiffusionConv}
\end{equation}
where the convergence is understood as weak convergence in $\mathcal{C}([0;T],\R^{d})^{2}$
topology and $B$ is a Brownian motion independent of $X$.\end{fact}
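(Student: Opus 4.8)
The plan is to prove Fact \ref{fact:SimpleMainThm} by a localization-and-comparison argument that reduces the diffusion $X$ satisfying \eqref{simplifiedDiffusion} to the Wiener-process-with-random-drift case already handled in \lemref{StochasticDrift}. The idea is that on a short interval $[t_i^n;t_{i+1}^n]$ the drift $\mu(X_s)$ is close to the constant $\mu(X_{t_i^n})$, so the increment of $X$ there looks like a Brownian motion with a (random, but $\mathcal{F}_{t_i^n}$-measurable) constant drift. First I would fix the mesh $n$ and work with the approximated truncated variation $ATV^{n,c}(t)$ from \eqref{atv}, which by \lemref{variationsSum} differs from $\TTV Xtc$ by at most $nc\to0$ as $c\searrow0$; this lets me replace $\TTV Xtc$ by the sum of independent-over-blocks contributions at the cost of an error that vanishes in the limit and then is sent to zero by letting $n\to\infty$.

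Next I would introduce, on each block, a comparison process: let $\hat{X}^{(i)}_s := X_{t_i^n} + (W_s - W_{t_i^n}) + \mu(X_{t_i^n})(s - t_i^n)$ for $s\in[t_i^n;t_{i+1}^n]$, which is exactly a Brownian motion with the random but piecewise-constant drift $\mu(X_{t_i^n})$. The Lipschitz property of $\mu$ together with \lemref{CrudeEstimate} controls $\sup_s|X_s - \hat{X}^{(i)}_s|$ on each block with overwhelming probability: the difference of drifts is $O(\text{block oscillation})$, hence the accumulated discrepancy in the drift integral is small. I would then use the oscillation-based variational characterization and the Lipschitz-type bound \eqref{lipschitzcondition}, namely $|\TTV{f+g}{\ab}c - \TTV f{\ab}c|\le \TTV g{\ab}{}$, to show that replacing $X$ by $\hat{X}^{(i)}$ changes the truncated variation on each block by a controllable amount. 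Summing over the $n$ blocks and combining with \lemref{StochasticDrift} applied block-by-block (the quadratic variation per block is the block length, so the limiting Brownian motions splice together into a single $3^{-1/2}B$ over $[0;T]$) should yield the claimed convergence for the comparison process.

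The convergence itself I would phrase metrically, using the Prokhorov-metric machinery collected in \lemref{TVEstimate} and \lemref{CballProch}. Specifically, I would bound $\pm{(X,\TTV Xtc - t/c),(X,3^{-1/2}B)}$ by a triangle-inequality chain: first from the true process to its block-wise constant-drift approximation (controlled via \lemref{CballProch} using the increment estimates and \eqref{lipschitzcondition}), then from the approximation to the limit (controlled via \lemref{StochasticDrift} and the block-splicing). The events where \lemref{CrudeEstimate} fails are handled by \lemref{TVEstimate}, contributing a term of size $2(1-\pr A)$ that is negligible. Taking $c\searrow0$ first and then $n\to\infty$ drives each term to zero.

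The main obstacle will be controlling the error between $\TTV Xtc$ and the block-wise comparison \emph{uniformly in $c$} as $c\searrow0$, since the truncated variation depends on the path in a highly nonlinear, non-local way and blows up like $1/c$. The pathwise bound \eqref{lipschitzcondition} only controls the difference by the \emph{total} variation of the error process $X - \hat{X}^{(i)}$, which need not be small; so the delicate point is to arrange the comparison so that the error enters through its oscillation or through the variational bound \eqref{variational2} rather than through raw total variation. I expect this to require a careful interplay between the mesh $n$, the threshold $c$, and the concentration estimate of \lemref{CrudeEstimate}, ensuring that the order of limits ($c\searrow0$ then $n\to\infty$) genuinely kills the discretization error while the CLT fluctuations of order $1$ survive intact.
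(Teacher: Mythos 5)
Your architecture coincides with the paper's: the same dyadic-type grid $t_i^n=iT/n$, the same block-wise comparison process ($\hat X^{(i)}$ is exactly the paper's $X^n$), the same use of \lemref{CrudeEstimate} for confinement, \lemref{variationsSum} for passing to the block sum, \lemref{StochasticDrift} for the limit of the comparison process, and the same Prokhorov-metric triangle chain via \lemref{TVEstimate} and \lemref{CballProch}, with the discretization errors controlled uniformly in $c$ before sending $c\searrow0$. However, the step you single out as the ``main obstacle'' and leave unresolved --- doubting that \eqref{lipschitzcondition} can close the argument because ``the total variation of the error process need not be small'' and suggesting one must instead route the error through oscillation or \eqref{variational2} --- is a genuine gap, and your diagnosis of it is wrong in a way that matters: the total variation of the error \emph{is} small, and \eqref{lipschitzcondition} is exactly the right tool.

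The point you are missing is that $X$ and $\hat X^{(i)}$ are driven by the \emph{same} Brownian motion $W$, so the martingale parts cancel identically and the error on a block is purely the difference of drift integrals,
\begin{equation*}
\hat X^{(i)}_t - X_t=\int_{t_i^n}^{t}\bigl(\mu(X_{t_i^n})-\mu(X_s)\bigr)\dd s ,
\end{equation*}
which is an absolutely continuous function of $t$. On the event $E_i^n$ that the path stays in the window of radius $A/n^{1/4}$ around $X_{t_i^n}$, the Lipschitz continuity of $\mu$ bounds the integrand by $Ln^{-1/4}$, so the error is Lipschitz with constant $w_n\leq Ln^{-1/4}$ and its total variation on the block is at most $w_n(t_{i+1}^n-t_i^n)$. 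Applying \eqref{sum} with $c_1=c$, $c_2=0$ (equivalently \eqref{lipschitzcondition}) block by block and summing gives a discrepancy of at most $w_nT$ between the block sums for $X$ and for the comparison process, \emph{uniformly in} $c$ --- this is the paper's display following \eqref{difference}. No appeal to oscillation or to \eqref{variational2} is needed, and no delicate coupling of $n$ with $c$ is required (that coupling only becomes unavoidable for non-constant $\sigma$, as Remark \ref{rem:onlySimpleDiffusions} explains). Once you insert this observation, your proposed proof is the paper's proof.
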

\begin{proof}
We recall that $t_{i}^{n}:=\frac{i}{n}T$, fix some $A\geq\mu^{*}+1$
and define random sets 
\[
A_{i}^{n}:=[X_{t_{i}^{n}}-A/n^{1/4};X_{t_{i}^{n}}+A/n^{1/4}].
\]
We also define random variables 
\[
\mu_{i}^{n}:=\mu(X_{t_{i}^{n}}).
\]
and events
\[
E_{i}^{n}:=\cbr{X_{s}\in A_{i}^{n},\text{ for }s\in[t_{i}^{n};t_{i+1}^{n}]},\quad E^{n}:=\bigcap_{i\in\cbr{0,1,\ldots,n-1}}E_{i}^{n}.
\]
 Using \lemref{CrudeEstimate} we check that for $n$ large enough
we have $\pr{E_{i}^{n}}\geq1-2n\exp(-n^{1/2}/2)$. Consequently, $\pr{E^{n}}\rightarrow1$
as $n\conv+\infty$. For $n\in\mathbb{N}$ we define c\`{a}dl\`{a}g processes
$\cbr{X_{t}^{n}}_{t\in[0;T]}$ which approximate our diffusion: 
\[
X_{t}^{n}:=X_{t_{i}^{n}}+\mu_{i}^{n}(t-t_{i}^{n})+W_{t}-W_{t_{i}^{n}},\quad\text{whenever }t\in[t_{i}^{n};t_{i+1}^{n}).
\]
One easily checks that $X^{n}\rightarrow X$ a.s. with respect to
$\norm{\cdot}{\infty}{}$. Let us recall (\ref{eq:atv}), we
define its counterpart for $X^{n}$, viz.,

\[
H^{n,c}(t):=\sum_{i=0}^{\lfloor nt\rfloor-1}\TTV{X^{n}}{[t_{i}^{n};t_{i+1}^{n}]}c+\TTV{X^{n}}{[t_{\lfloor nt\rfloor}^{n};t]}c.
\]
One checks (using the same method as in the proof of Lemma \ref{lem:variationsSum})
that 
\[
H^{n,c}(t)-\TTV{X^{n}}tc\conv0,\:\text{a.s.}\quad\text{when }c\searrow0,
\]
norm $\norm{\cdot}{\infty}{}$. On each interval $t\in[t_{i}^{n};t_{i+1}^{n})$
we have 
\begin{equation}
X_{t}^{n}-X_{t}=\int_{t_{i}^{n}}^{t}(\mu_{i}^{n}-\mu(X_{s}))\dd s.\label{eq:difference}
\end{equation}
We observe that conditionally on $E_{i}^{n}$ this expression defines
a function of $t$ which is Lipschitz with constant $w_{n}\leq Ln^{-1/4}$
for some $L>0$. This follows by the fact that $\mu$ is a Lipschitz
function itself. By (\ref{eq:sum}) applied with $c_{1}=c$ and $c_{2}=0$,
conditionally on $E_{i}^{n}$, we have that
\[
\TTV{X^{n}}{[t_{i}^{n},t]}c-w_{n}(t-t_{i}^{n})\leq\TTV X{[t_{i}^{n},t]}c\leq\TTV{X^{n}}{[t_{i}^{n},t]}c+w_{n}(t-t_{i}^{n}),
\]
for any $t\in[t_{i}^{n};t_{i+1}^{n}]$. Further 
\begin{equation}
1_{E^{n}}H^{n,c}(t)-w_{n}T\leq1_{E^{n}}ATV^{n,c}(t)\leq1_{E^{n}}H^{n,c}(t)+w_{n}T,\label{eq:hahahaKrowa}
\end{equation}
for any $t\in[0;T]$. In other words: $\norm{1_{E^{n}}ATV^{n,c}(t)-1_{E^{n}}H^{n,c}(t)}{\infty}{}\leq2w_{n}T$.
Lemma \ref{lem:CballProch} implies that 
\begin{equation}
\pm{(X^{n},1_{E^{n}}ATV^{n,c}),(X^{n},1_{E^{n}}H^{n,c})}\leq4w_{n}T.\label{eq:tmp17}
\end{equation}
It will be crucial that this estimate is uniform in $c$. Let us denote
$L^{n,c}:=\rbr{H^{n,c}(t)-c/t}$. Lemma \ref{lem:StochasticDrift}
applied term by term to $H^{n,c}$ yields the functional convergence
\begin{equation}
(L^{n,c},X^{n})\rightarrow^{d}(3^{-1/2}B,X^{n}),\quad\text{ as }c\searrow0,\label{eq:tmpConv}
\end{equation}
where $B$ and $X^{n}$ are independent. In the above, we understand
the convergence as the functional one in $\mathcal{C}\times\mathcal{D}$
(see also (\ref{eq:CDspace}))

The rest of the proof will follow by a metric-theoretic considerations.
Let us denote 
\begin{alignat*}{2}
 & X_{1}(c):=\rbr{\TTV Xtc-t/c,X}, & X_{2}(c,n):=\rbr{\TTV Xtc-t/c,X^{n}},\\
 & X_{3}(c,n):=\rbr{ATV^{n,c}(t)-t/c,X^{n}}, & X_{4}(c,n):=\rbr{1_{E^{n}}(ATV^{n,c}(t)-t/c),X^{n}},\\
 & X_{5}(c,n):=\rbr{1_{E^{n}}(H^{n,c}(t)-t/c),X^{n}}, & X_{6}(c,n):=\rbr{H^{n,c}(t)-t/c,X^{n}},\\
 & X_{7}(n):=\rbr{3^{-1/2}B,X^{n}}, & X_{8}:=\rbr{3^{-1/2}B,X}.
\end{alignat*}
Let us fix some $\epsilon>0$. We find $n_{1,2}$ such that for any
$n\geq n_{1,2}$ we have $\pm{X_{1}(c),X_{2}(c,n)}\leq\epsilon$ which
is possible by \lemref{CballProch} and convergence $X^{n}\conv X$
.We find $n_{3,4}$ such that for any $n\geq n_{3,4}$ we have $\pm{X_{3}(c,n),X_{4}(c,n)}\leq\epsilon$
which is possible by \lemref{TVEstimate} and estimation of the probability
of $E^{n}$. Further we find $n_{4,5}$ such that for any $n\geq n_{4,5}$
we have $\pm{X_{4}(c,n),X_{5}(c,n)}\leq\epsilon$ which is given by
\eqref{tmp17}. Next, we check that for any $n\geq n_{3,4}$ we have
$\pm{X_{5}(c,n),X_{6}(c,n)}\leq\epsilon$ as well. Finally, we choose
$n_{7,8}$ such that for any $n\geq n_{7,8}$ we have $\pm{X_{7}(n),X_{8}}\leq\epsilon$
which holds by \lemref{CballProch}. We denote $N=\max(n_{1,2},n_{3,4},n_{4,5},n_{7,8})$,
obviously for this $N$ all the above inequalities hold simultaneously
for any $c>0$.

Now we choose $c_{0}$ such that for any $c\leq c_{0}$ we have $\pm{X_{2}(c,N),X_{3}(c,N)}\leq\epsilon$
and $\pm{X_{6}(c,N),X_{7}(c,N)}\leq\epsilon$. The first one is possible
by \lemref{variationsSum} and \lemref{CballProch} and the second
one by \eqref{tmpConv} and again \lemref{CballProch}. Using the
triangle inequality multiple times one obtains
\[
\pm{X_{1}(c),X_{8}}\leq8\epsilon,\quad\text{for any }c\leq c_{0},
\]
This yields convergence \eqref{simpleDiffusionConv} since $\epsilon$
was arbitrary.\end{proof}
\begin{rem}
\label{rem:onlySimpleDiffusions}We strongly believe that it is not
possible to improve the above proof to general diffusions. The main
reason is that without $\sigma=const$ assumption equation \eqref{difference}
is not longer true. Consequently, the estimate in \eqref{tmp17} does
not depend only on $w_{n}$ but also on $c$. Even worse, one can
check that the estimate diverges to infinity as $c\searrow0$. We
could change $n$ and $c$ simultaneously in a smart way so that the
estimate is still useful. However a new problem emerges then, namely
estimate in \lemref{variationsSum} also depend on $n$ and $c$.
It appears that it is not possible to change $n$ and $c$ is such
way that both estimates converge to $0$ when $c\searrow0$. 
\end{rem}

\subsection{Proof for general diffusion}

Now we proceed to the general case. Before proving Theorem \ref{thm:CLT}
we present some measure-theoretic considerations. In the reasoning
below by $\mathbb{W}$ we denote the Wiener measure on $\crt$, see
e.g. \cite[Proposition I.3.3]{Revuz:1991kx}, and by $H$ we denote
the Cameron-Martin space, see \cite[Definition VIII.2.1]{Revuz:1991kx}.
Moreover by $\mathcal{H}$ we denote algebra (i.e. class closed under
finite sums and finite intersections) generated by open balls with
centers in $H$. We have 
\begin{lem}
\label{lem:measureTh}Let $h:\crt\mapsto\R_{+}$ be a measurable mapping
such that $\int h(f)\mathbb{W}(\dd f)=1$. Then for any $\epsilon>0$
there exists $m\in\mathbb{N}$, sets $A_{1},A_{2},\ldots,A_{m}\in\mathcal{H}$
and $h_{1},h_{2},\ldots,h_{m}\in\R_{+}$ such that 
\begin{equation}
\int_{\mathcal{C}}|h_{\epsilon}(f)-h(f)|\mathbb{W}(\dd f)\leq\epsilon,\label{eq:equationHorriblis}
\end{equation}
where 
\[
h_{\epsilon}(f):=\sum_{i=1}^{m}h_{i}1_{A_{i}}(f).
\]
Moreover, one may choose such $A_{1},A_{2},\ldots,A_{m}$ that for
all $i\leq m$, $\mathbb{W}(\partial A_{i})=0$. \end{lem}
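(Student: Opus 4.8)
The plan is to combine three ingredients: $L^{1}(\mathbb{W})$-density of simple functions, the classical theorem that an algebra generating a $\sigma$-field approximates every set of that $\sigma$-field in measure, and a radius-selection argument forcing the generating balls to have $\mathbb{W}$-null boundaries. First I would reduce to approximating indicators of Borel sets. Since $h\in L^{1}(\mathbb{W})$, truncation and the density of simple functions produce a Borel simple function $h'=\sum_{j=1}^{N}c_{j}1_{B_{j}}$ with $\int_{\mathcal{C}}|h-h'|\,\mathbb{W}(\dd f)\le\epsilon/2$; it then suffices to approximate each $1_{B_{j}}$ in $L^{1}(\mathbb{W})$ by the indicator of a set from $\mathcal{H}$ with $\mathbb{W}$-null boundary, and to reassemble the pieces.

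Next I would isolate a convenient generating family with null boundaries. Write $S:=\{f\in\crt:f(0)=0\}$ for the $\mathbb{W}$-full support of $\mathbb{W}$; since $H$ is dense in $S$ and $S$ is separable, the open balls $B(g,r):=\{f:\norm{f-g}{\infty}{}<r\}$ with $g\in H$ and $r>0$ generate the Borel $\sigma$-field of $S$, which is all that matters for $L^{1}(\mathbb{W})$. For a fixed centre $g$ the map $r\mapsto\mathbb{W}(\{f:\norm{f-g}{\infty}{}\le r\})$ is nondecreasing, hence continuous off an at most countable set; at each of its (dense) continuity points $r$ one has $\mathbb{W}(\{f:\norm{f-g}{\infty}{}=r\})=0$, so, since $\partial B(g,r)\subseteq\{f:\norm{f-g}{\infty}{}=r\}$, the ball $B(g,r)$ has $\mathbb{W}(\partial B(g,r))=0$. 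Call such a ball \emph{good}. Because the good radii are dense, every open ball $B(g,r)$ is the increasing union of good balls $B(g,r')$ with $r'\nearrow r$, so the good balls still generate the Borel $\sigma$-field of $S$; and since $\partial(A\cup A')\subseteq\partial A\cup\partial A'$, $\partial(A\cap A')\subseteq\partial A\cup\partial A'$ and $\partial A^{c}=\partial A$, every element of the algebra $\mathcal{H}_{0}\subseteq\mathcal{H}$ generated by the good balls is a finite combination of good balls and therefore has $\mathbb{W}$-null boundary.

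Then I would apply the approximation theorem: the family $\{B\in\mathcal{B}(S):\forall\delta>0\ \exists A\in\mathcal{H}_{0},\ \mathbb{W}(A\triangle B)<\delta\}$ is a $\sigma$-field containing $\mathcal{H}_{0}$, hence equals the Borel $\sigma$-field of $S$. Applying this to each $B_{j}$ gives $A_{j}\in\mathcal{H}_{0}$ with $\sum_{j}|c_{j}|\,\mathbb{W}(A_{j}\triangle B_{j})\le\epsilon/2$, so that $\sum_{j}c_{j}1_{A_{j}}$ lies within $\epsilon$ of $h$ in $L^{1}(\mathbb{W})$. Passing to the atoms of the finite algebra generated by $A_{1},\dots,A_{N}$ --- each atom being a finite intersection of the $A_{j}$ and their complements, hence again a member of $\mathcal{H}_{0}$ with null boundary --- rewrites this combination in the required disjoint form $\sum_{i}h_{i}1_{A_{i}}$; replacing any negative $h_{i}$ by $0$ only decreases the $L^{1}$ distance to the nonnegative $h$, so we may take $h_{i}\in\R_{+}$, and the result is exactly the claimed $h_{\epsilon}$ satisfying \eqref{equationHorriblis} with $\mathbb{W}(\partial A_{i})=0$.

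The individual ingredients are standard; the care lies in meeting the two side conditions simultaneously. The restriction of the centres to the Cameron--Martin space $H$ is what makes the sets $A_{i}$ usable in the later Girsanov/R\'{e}nyi-mixing step, and it must be reconciled with the null-boundary requirement $\mathbb{W}(\partial A_{i})=0$. I expect the main obstacle to be precisely this reconciliation, i.e.\ checking that after discarding, for each centre, the at most countably many radii with $\mathbb{W}$-charged spheres, the remaining good balls \emph{still} generate the full Borel $\sigma$-field of $S$; this is what the density of the good radii, together with the separability of $\crt$, is there to guarantee.
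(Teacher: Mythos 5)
Your proposal reaches the same conclusion by a genuinely different route, and the mathematics is essentially sound. The paper proceeds by regularity of the measure: it slices $h$ into level sets $S_{k}$, envelopes each $S_{k}$ by an open set $O_{k}$ with $\mathbb{W}(O_{k}\setminus S_{k})$ small, writes $O_{k}$ as a countable union of open balls centred at a dense subset of $H$, and keeps a finite subunion $A_{k}$; the sets produced are therefore finite unions of balls, which sits squarely inside $\mathcal{H}$ as the paper defines it. For the null boundaries the paper shows that \emph{every} ball $B(g,r)$ with $g\in H$ is a $\mathbb{W}$-continuity set, via the Cameron--Martin theorem and the continuity of the law of $\sup_{t}|W_{t}|$. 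You instead (i) approximate $h$ by a Borel simple function and invoke the theorem that a generating algebra approximates every Borel set in measure, and (ii) obtain null boundaries by discarding, for each centre, the at most countably many radii at which the radial distribution function jumps. Your null-boundary argument is more elementary and more general (it works for any Borel probability measure on a separable metric space), at the cost of the extra check that the good balls still generate the Borel $\sigma$-field, which you supply correctly; the paper's is shorter but exploits the specific structure of $\mathbb{W}$ and $H$.

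The one step to look at again is the reliance on complements. The paper's $\mathcal{H}$ is, by its parenthetical definition, closed only under finite unions (``sums'') and intersections, i.e.\ it is a lattice rather than an algebra in the measure-theoretic sense. Your appeal to the approximation theorem needs the approximating class to be complement-closed (the proof that the approximable sets form a $\sigma$-field uses $\mathbb{W}(A^{c}\triangle B^{c})=\mathbb{W}(A\triangle B)$ together with $A^{c}$ remaining in the class), and your passage to atoms explicitly forms intersections with complements $A_{j}^{c}$, which need not lie in $\mathcal{H}$ under that reading. Neither point is fatal: the atoms step is dispensable, since the canonical simple approximation of the nonnegative $h$ already has nonnegative coefficients and the lemma does not require the $A_{i}$ to be disjoint; and the algebra-approximation step can be replaced by outer-approximating each $B_{j}$ by an open set and then by a finite union of good balls, which is exactly the paper's mechanism. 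As written, though, your argument establishes the lemma for the complement-closed algebra generated by the balls rather than for $\mathcal{H}$ as literally defined, so you should either repair those two steps or note that the intended $\mathcal{H}$ is the full algebra.
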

\begin{proof}
In the proof we will write $\mathcal{C}$ instead of $\crt$ and $B(f,r)$
will denote an open ball with convention $B(f,0)=\emptyset$. Let
us notice that without loss of generality we can assume that $h$
is bounded by some $l>0$ and has compact support, say contained in
ball $B(0,R)$. Indeed for any function $h$ and any $\epsilon>0$
we can choose $l,R$ such that $\int_{\mathcal{C}}|h(f)1_{\cbr{h\leq l}}1_{\cbr{f\in B(0,R)}}-f(f)|\mathbb{W}(f)\leq\epsilon/2$.
Now it is enough to approximate $h(f)1_{\cbr{h\leq l}}1_{\cbr{f\in B(0,R)}}$
with accuracy $\epsilon/2$. Therefore from now on we will work implicitly
with the assumptions listed above. 

Let us denote
\[
S_{k}:=\cbr{f\in\mathcal{C}:h(f)\in\left(k\epsilon/2,(k+1)\epsilon/2\right]},
\]
for $k\in\cbr{0,1,\ldots,2l/\epsilon}$. We note that by our assumption
sets $S_{k}$ are bounded. We put $\delta:=\epsilon^{2}/(4l^{2})$.
By the regularity of $\mathbb{W}$ (see \cite[Theorem 1.1.1]{Billingsley:1968aa})
we can find open sets $O_{k}$ such that
\begin{equation}
S_{k}\subset O_{k}\quad\text{and}\quad\mathbb{W}(O_{k}\setminus S_{k})\leq\delta/2.\label{eq:setEstimates}
\end{equation}
It is well known that $\mathcal{C}$ is a separable space and $H$
is its dense subspace so one can easily find a countable subset $\cbr{f_{1},f_{2},\ldots}\subset H$
which is dense in $\mathcal{C}$. For each $f_{i}$ we define $r_{i}^{k}:=\sup\cbr{r:B(f_{i},r)\subset O_{k}}/2$
(by convention we put $r_{i}^{k}:=0$ if the set is empty). One promptly
proves that $O_{k}=\bigcup_{i}B(f_{i},r_{i}^{k})$. By the continuity
of measure there exists $i_{k}\in\mathbb{N}$ such that 
\[
\mathbb{W}(O_{k})-\mathbb{W}(\bigcup_{i\leq i_{k}}B(f_{i},r_{i}^{k}))\leq\delta/2.
\]
 Let us denote $A_{k}:=\bigcup_{i\leq i_{k}}B(f_{i},r_{i}^{k})$.
We now define 
\[
h_{\epsilon}(f):=\sum_{k}\rbr{\frac{k}{2}\epsilon}1_{A_{k}}(f).
\]
We will now show that $h_{\epsilon}$ is a good approximating function.
We recall that by the construction $A_{k}\subset O_{k}$ and $W(O_{k}\setminus A_{k})\leq\delta/2$.
This together with \eqref{setEstimates} yields that $\mathbb{W}(S_{k}\Delta A_{k})\leq\delta$,
where $\Delta$ denotes the symmetric difference. We have
\begin{multline*}
\int_{\mathcal{C}}|h_{\epsilon}(f)-h(f)|\mathbb{W}(\dd f)=\int_{\mathcal{C}}\left|\sum_{k}\rbr{k\epsilon/2}1_{A_{k}}(f)-\sum_{k}h(f)1_{S_{k}}(f)\right|\mathbb{W}(\dd f)\\
\leq\sum_{k}\int_{\mathcal{C}}\left|(k\epsilon/2)1_{A_{k}}(f)-h(f)1_{S_{k}}(f)\right|\mathbb{W}(\dd f)\\
\leq\frac{\epsilon}{2}\sum_{k}\mathbb{W}(A_{k}\cap S_{k})+l\sum_{k}\mathbb{W}(S_{k}\Delta A_{k})\leq\frac{\epsilon}{2}+l\frac{2l}{\epsilon}\delta=\epsilon.
\end{multline*}

To check $\mathbb{W}(\partial A_{k})=0$ is is enough to prove that
for any $f\in H$ and any $r>0$ we have $\mathbb{W}(\partial B(f,r))=0$.
By \cite[Theorem VIII.2.2]{Revuz:1991kx} it is enough to show that
$\mathbb{W}(\partial B(0,r))=0.$ This holds by the fact that $\sup$
of the Wiener process has a continuous density (see \cite[Section III.3]{Revuz:1991kx}). 
\end{proof}
Finally we present 
\begin{proof}
(of \thmref{CLT}). We first will show that in order to prove \eqref{assertion-2}
it is enough to prove 
\begin{equation}
\rbr{X,\TTV Xtc-\frac{\sb{X_{t}}}{c}}\conv^{d}\rbr{X,2M},\quad\text{as }c\searrow0,\label{eq:tmp99}
\end{equation}
where $M$ is the same as in Theorem \ref{thm:CLT}. Since $X_{0}=0$
by \eqref{approximationBound} there exists process $\cbr{R_{c}(t)}_{t\in[0;T]}$
such that $\norm{R_{c}}{\infty}{}\leq c$ almost surely and 
\begin{equation}
X_{t}=\UTV Xtc-\DTV Xtc+R_{c}(t).\label{eq:wrona}
\end{equation}
This together with \eqref{TVisUTVDTV} yields that 
\begin{equation}
\UTV Xtc=\frac{1}{2}\rbr{\TTV Xtc+X_{t}-R_{c}(t)},\label{eq:utvfromTV}
\end{equation}
Therefore 
\[
\UTV Xtc-\frac{1}{2}\rbr{\frac{\sb{X_{t}}}{c}+X_{t}}=\frac{1}{2}\rbr{\TTV Xtc-\frac{\sb{X_{t}}}{c}}-\frac{1}{2}R_{c}(t).
\]
Now the convergence follows simply by fact that $\TTV Xtc-\sb{X_{t}}/c$
is a continuous transformation of \eqref{tmp99} and by \cite[Corollary 2, p.31]{Billingsley:1968aa},
\cite[Theorem 4.1]{Billingsley:1968aa}. A completely analogous argument
proves the convergence of $\DTV Xtc-\frac{1}{2}\rbr{\sb{X_{t}}/c-X_{t}}$.
The joint convergence in \eqref{assertion-2} can be established in
the same way.

It will be more convenient to work with additional assumption that
\begin{equation}
C_{1}\geq\sigma\geq C_{2}>0,\label{eq:additionalAss}
\end{equation}
for some constants $C_{1},C_{2}>0$. At the end of the proof we will
remove this assumption. Diffusion \eqref{diffusionDef} writes in
the integral form as 
\[
X_{t}=\int_{0}^{t}\sigma(X_{s})\dd W_{s}+\int_{0}^{t}\mu(X_{s})\dd s.
\]
Let us define $\beta_{t}:=\int_{0}^{t}\sigma(X_{s})^{2}\dd s=\sb{X_{t}}$,
its inverse $\alpha_{t}:=\inf\cbr{s\geq0:\beta_{s}>t}$ and
\[
\tilde{X}_{t}:=X_{\alpha_{t}},\quad t\in[0;T_{0}],\:\text{where }T_{0}:=C_{2}^{2}T.
\]
By the time-change formula \cite[Theorem 8.5.7]{Oksendal:2003kx}
we obtain that $\tilde{X}$ is also a diffusion fulfilling equation
\[
\tilde{X_{t}}=\tilde{W_{t}}+\int_{0}^{t}\frac{\mu(\tilde{X}_{s})}{\sigma^{2}(\tilde{X}_{s})}\dd s,
\]
for some Brownian motion $\tilde{W}$. We chose such $T_{0}$ that
the definition is valid (i.e. $\alpha_{T_{0}}\leq T$). We note also
that $x\mapsto\frac{\mu(x)}{\sigma^{2}(x)}$ is a Lipschitz function.
Let us now denote the natural filtration of $\tilde{X}$ (and $\tilde{W}$)
by $\mathcal{F}$. Making the reverse change of time we get $X_{t}=\tilde{X}_{\beta_{t}}$.
We denote also $\mathcal{G}_{t}:=\mathcal{F}_{\beta_{t}}$. Now we
can apply Fact \ref{fact:SimpleMainThm}. We know that 
\begin{equation}
\rbr{\text{CTV}^{c}(\tilde{X},t),\tilde{X}}\conv^{d}(B,\tilde{X}),\label{eq:convHaha}
\end{equation}
where $CTV^{c}\left(X,t\right):=TV^{c}\left(X,t\right)-\frac{c}{t}$
and $B$ and $\tilde{X}$ are independent. Let us also note that $\text{CTV}^{c}$
can be regarded as a measurable mapping $\text{CTV}^{c}:\crt\mapsto\crt$.

Now, let $K\in\mathcal{H}$ be non-empty set. We check that the measure
$\pr{\tilde{X}\in\cdot|X\in K}$ is absolutely continuous with respect
to $\pr{\tilde{X}\in\cdot}$. Indeed one needs only to check that
$\pr{X\in K}>0$. By the Radon-Nikod�m theorem \cite[Theorem A.1.3]{Kallenberg:1997fk}
there exists a measurable function $h$ such that 
\begin{equation}
\pr{\tilde{X}\in\dd f|X\in K}=h(f)\pr{\tilde{X}\in\dd f}.\label{eq:radonNikodym}
\end{equation}
Using this fact we can leverage \eqref{convHaha}. Let us first note
that by the portmanteau theorem \cite[Theorem I.2.1]{Billingsley:1968aa}
and \cite[Theorem I.2.2]{Billingsley:1968aa} and standard topological
considerations we know that \eqref{convHaha} is equivalent to
\begin{equation}
\pr{\cbr{CTV^{c}(\tilde{X})\in K_{1}}\cap\cbr{\tilde{X}\in K_{2}}}\rightarrow\pr{B\in K_{1}}\pr{\tilde{X}\in K_{2}},\quad\forall_{K_{1},K_{2}\in\mathcal{H}}.\label{eq:setsConvergence}
\end{equation}
Further, by \eqref{radonNikodym}, we have 
\begin{multline*}
a_{c}:=\pr{\cbr{CTV^{c}(\tilde{X})\in K_{1}}\cap\cbr{\tilde{X}\in K_{2}}\cap\cbr{X\in K}}=\pr{X\in K}\\\pr{\cbr{CTV^{c}(\tilde{X})\in K_{1}}\cap\cbr{\tilde{X}\in K_{2}}|X\in K}\\
=\pr{X\in K}\int_{\mathcal{C}}h(f)1_{\cbr{CTV^{c}(f)\in K_{1}}}1_{\cbr{f\in K_{2}}}\pr{\tilde{X}\in\dd f},
\end{multline*}
where $\pr{\tilde{X}\in\dd f}$ is the same as the Wiener measure.
We now approximate $h$ with accuracy $\epsilon=1/n$ with simple
function $h^{n}$ satisfying conditions of Lemma, \ref{lem:measureTh}
(we use additional superscript $^{n}$ to denote the case we are referring
to). Hence we have
\begin{equation}
\int_{\mathcal{C}}|h^{n}(f)-h(f)|\pr{\tilde{X}\in\dd f}\leq\frac{1}{n},\label{eq:lab}
\end{equation}
 We define 
\[
a_{c}^{n}:=\pr{X\in K}\int_{\mathcal{C}}h^{n}(f)1_{\cbr{CTV^{c}(f)\in K_{1}}}1_{\cbr{f\in K_{2}}}\pr{\tilde{X}\in\dd f}.
\]
 One easily checks that for any $c>0$ there is $|a_{c}-a_{c}^{n}|\leq1/n$.
Applying \eqref{setsConvergence} we obtain 
\begin{multline*}
a_{c}^{n}=\pr{X\in K}\sum_{i=1}^{m^{n}}h_{i}^{n}\pr{\cbr{CTV^{c}(\tilde{X})\in K_{1}}\cap\cbr{\tilde{X}\in A_{i}^{n}\cap K_{2}}}\\
\rightarrow_{c\searrow0}\pr{B\in K_{1}}\pr{X\in K}\sum_{i=1}^{m^{n}}h_{i}^{n}\pr{\tilde{X}\in A_{i}^{n}\cap K_{2}}\\
=\pr{B\in K_{1}}\pr{X\in K}\int_{K_{2}}h^{n}(f)\pr{\tilde{X}\in\dd f}=:a^{n}.
\end{multline*}
It is easy to check that $|a^{n}-a|\leq1/n$, where 
\[
a:=\pr{B\in K_{1}}\pr{X\in K}\int_{K_{2}}h(f)\pr{\tilde{X}\in\dd f}=\pr{B\in K_{1}}\pr{\cbr{\tilde{X}\in K_{2}}\cap\cbr{X\in K}}.
\]
Using the standards arguments we obtain that for any $K_{1},K_{2},K\in\mathcal{H}$
\begin{multline*}
\pr{\cbr{CTV^{c}(\tilde{X})\in K_{1}}\cap\cbr{\tilde{X}\in K_{2}}\cap\cbr{X\in K}}\\
\rightarrow_{c\searrow0}\pr{B\in K_{1}}\pr{X\in K}\int_{K_{2}}h(f)\pr{\tilde{X}\in\dd f}=\pr{B\in K_{1}}\pr{\cbr{\tilde{X}\in K_{2}}\cap\cbr{X\in K}}.
\end{multline*}
Using \cite[Theorem I.2.2]{Billingsley:1968aa} in the same spirit
as in the case of \eqref{setsConvergence} we get
\[
(CTV^{c}(\tilde{X}),\tilde{X},X)\conv^{d}(B,\tilde{X},X),\quad\text{ as }c\searrow0.
\]
where $B$ is independent of $(\tilde{X},X)$ hence also of $\beta$.
Changing the time according to this process we obtain 
\[
\TTV{\tilde{X}}{\beta_{t}}c-\frac{\beta_{t}}{c}\conv^{d}B_{\beta_{t}},
\]
This equation is well-defined as long as $t\leq T_{00}=T_{0}/C_{1}^{2}=TC_{2}^{2}/C_{1}^{2}$.
Our final step is to use (\ref{timechange}) in order to get
\[
\TTV Xtc-\frac{\sb X_{t}}{c}\conv^{d}B_{t}.
\]
So far we have obtained convergence in the space $\mathcal{C}([0;T_{00}],\R)$.
Taking the initial value of $T$ larger (which is possible as our
diffusion is well defined on the whole line) we can obtain the convergence
in $\mathcal{C}([0;T],\R).$

We are yet to remove assumption (\ref{eq:additionalAss}). For any
$N>0$ we put 
\[
\sigma^{N}(x):=\begin{cases}
\sigma(x), & \text{ if }|x|\leq N,\\
\sigma(N), & \text{ if }x>N,\\
\sigma(-N), & \text{ if }x<-N,
\end{cases}\quad\mu^{N}(x):=\begin{cases}
\mu(x), & \text{ if }|x|\leq N,\\
\mu(N), & \text{ if }x>N,\\
\mu(-N), & \text{ if }x<-N.
\end{cases}
\]
 We define a family of diffusions by 
\[
\dd X_{t}^{N}:=\sigma^{N}(X_{t}^{N})\dd W_{t}+\mu(X_{t}^{N})\dd t,\quad X_{0}^{N}=0.
\]
We assume that this diffusion is driven by the same $W$ as in (\ref{eq:diffusionDef})
and that $X,X^{N}$ are coupled in such a way that $X_{t}^{N}=X_{t}$
and $\sb{X_{t}^{N}}=\sb{X_{t}}$ whenever $t\leq\tau^{N}:=\inf\cbr{t\geq0:|X_{t}^{N}|>N}=\inf\cbr{t\geq0:|X_{t}|>N}$.
The solution of (\ref{eq:diffusionDef}) is a continuous process and
exists on the whole line, therefore for any $T>0$ we have 
\[
1_{\cbr{\tau^{N}\leq T}}\conv_{N\conv+\infty}0,\text{ a.s. }
\]
We notice now that $X^{N}$ fulfills (\ref{eq:additionalAss}), hence
the thesis of Theorem \ref{thm:CLT} is already proved for it. The
quantities studied in the proof are equal for $X^{N}$ and $X$ on
the set $\cbr{\tau^{N}\leq T}$. Using the metric-theoretic arguments
as in the proof of Fact \ref{fact:SimpleMainThm} one easily concludes
the proof.
\end{proof}

\section{Proof of Theorem \ref{thm:LLN}\label{sec:ProofLLN}}

As indicated in Introduction the proof splits into two parts. In the
first one we will prove Theorem \ref{thm:LLN} in the case when $X$
is a Wiener process with a drift. This will serve as a key step for
the second part of the proof in which, using time change techniques
we will elevate the result to a general class of semimartingales.

\subsection{Proof for Wiener process with drift\label{sub:LLNSmallCWiner}}

This is much simpler compared to the proof of Lemma \ref{lem:BMquadruple},
therefore we provide only a sketch leaving details to the reader.
Let $X$ be a Wiener process with drift, i. e.
\[
X_{t}:=W_{t}+\mu t,
\]
for a standard Wiener process $W$ and $\mu\in\R$. We have
\begin{lem}
\label{lem:LLNBM}Let $T>0$ and let $\cbr X_{t\in[0;T]}$ be a Wiener
process with drift. Then 
\[
\lim_{c\searrow0}c\:\TTV Xtc\conv\sb X_{t},\quad\text{a.s.}
\]
The converge is understood in the $\mathcal{C}([0;T],\R)$ topology. \end{lem}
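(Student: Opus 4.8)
The plan is to reduce the statement to a renewal law of large numbers, exactly the structure isolated in the proof of \lemref{BMquadruple}, and then to upgrade the resulting convergence to the almost sure, uniform convergence required by $\crt$. First note that for a Wiener process with drift $\sb X_t=\sb W_t=t$, so the assertion is $c\,\TTV Xtc\conv t$ almost surely, uniformly on $[0;T]$. Transferring \thmref{JointStructure} to the path $f=X$ and using continuity of $X$, I have $\TTV X{T_{U,k}^{c}}c=\sum_{i=0}^{k-1}G_{i}(c)$ with $G_{i}(c)=\rbr{M_{i}^{c}-m_{i}^{c}-c}+\rbr{M_{i}^{c}-m_{i+1}^{c}-c}$, and, writing $D_{i}(c)=T_{U,i}^{c}-T_{U,i-1}^{c}$, the pairs $\rbr{D_{i}(c),G_{i}(c)}_{i\ge1}$ are i.i.d.\ by the strong Markov property and spatial homogeneity of $X$. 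Since $t\mapsto\TTV Xtc$ is nondecreasing and each $G_{i}(c)=O(c)$, the process $\TTV Xtc$ differs from the renewal--reward sum $\sum_{T_{U,i}^{c}\le t}G_{i}(c)$ by at most one increment, so it suffices to analyse the latter.

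The relevant moment asymptotics are already available from the Laplace transforms \eqref{LaplaceTV1}--\eqref{LaplaceTV2}: by \eqref{expectanceD}, $\ev{D_{1}(c)}=2c^{2}+O(c^{4})$; by \eqref{expectanceZ} with $(a,b)=(1,0)$, $\ev{G_{1}(c)}=\sinh(2c\mu)/\mu$; and by \eqref{drift} the reward rate is $\ev{G_1(c)}/\ev{D_1(c)}=\mu\coth(c\mu)=m_\mu^c$ of \eqref{mcmu}, whence $c\,\ev{G_1(c)}/\ev{D_1(c)}=c\mu\coth(c\mu)\conv1$ as $c\searrow0$. Plugging these into the renewal--reward identities for the stopped sum indexed by the number $M_c(t)$ of cycles completed before $t$ gives $\ev{}\rbr{c\,\TTV Xtc}\conv t$, while the analogous second--moment computation will supply the variance bound $\Var{c\,\TTV Xtc}=O(c^{2})$, uniformly for $t\in[0;T]$; this is consistent with \lemref{BMquadruple}, which shows $\TTV Xtc-t/c$ has $O(1)$ fluctuations with Gaussian limit $3^{-1/2}B_t$. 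I note in passing that \lemref{BMquadruple} already yields that $\TTV Xtc-t/c$ is tight, so $c\,\TTV Xtc-t=c\rbr{\TTV Xtc-t/c}\conv0$ in probability for free; the genuine content here is therefore the almost sure and uniform statement.

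To obtain almost sure convergence at a fixed $t$, I fix $\theta\in(0;1)$, set $c_{n}=\theta^{n}$, and apply Chebyshev's inequality: the mean convergence and the variance bound give $\pr{\left|c_{n}\,\TTV X t{c_{n}}-t\right|>\varepsilon}\cleq c_{n}^{2}=\theta^{2n}$ for every $\varepsilon>0$, which is summable, so the Borel--Cantelli lemma yields $c_{n}\,\TTV X t{c_{n}}\conv t$ a.s. (The summability along the \emph{full} geometric sequence, rather than along a thinned subsequence, is exactly what forces the use of the variance bound.) To fill the gaps I invoke that $c\mapsto\TTV Xtc$ is decreasing (Section~\ref{sec:PropertiesTV}): for $c\in[c_{n+1};c_{n}]$ monotonicity gives $\theta\,\bigl(c_{n}\TTV Xt{c_{n}}\bigr)\le c\,\TTV Xtc\le\theta^{-1}\bigl(c_{n+1}\TTV Xt{c_{n+1}}\bigr)$, hence $\theta t\le\liminf_{c\searrow0}c\,\TTV Xtc\le\limsup_{c\searrow0}c\,\TTV Xtc\le t/\theta$ a.s.; intersecting the exceptional null sets over $\theta=1-1/m$, $m\in\mathbb N$, gives $\lim_{c\searrow0}c\,\TTV Xtc=t$ a.s.

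Finally, taking the union of the exceptional null sets over rational $t$ produces a single almost sure event on which $c\,\TTV Xtc\conv t$ for all rational $t$; since the limit $t\mapsto t$ is continuous and each $t\mapsto c\,\TTV Xtc$ is nondecreasing, a Dini/P\'olya sandwich over a fine rational partition $0=s_{0}<\dots<s_{K}=T$ of mesh $<\varepsilon$ upgrades this to uniform convergence on $[0;T]$, i.e.\ convergence in $\crt$. The step I expect to be the main obstacle is the variance estimate $\Var{c\,\TTV Xtc}=O(c^{2})$: although every ingredient is one of the explicit transforms of \lemref{BMquadruple}, bounding the second moment of the renewal--reward sum requires controlling the random index $M_c(t)$ through a Wald-type computation, and it is precisely this random number of cycles, rather than any single strong law, that dictates the Chebyshev--Borel--Cantelli plus monotonicity route described above.
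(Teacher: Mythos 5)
Your proposal is correct in substance and runs on the same engine as the paper's proof: the renewal decomposition of $\TTV Xtc$ from \thmref{JointStructure} together with the explicit moments extracted from the Laplace transforms \eqref{LaplaceTV1}--\eqref{LaplaceTV2} (your formulas $\ev{D_1(c)}=2c^2+O(c^4)$, $\ev{G_1(c)}/\ev{D_1(c)}=\mu\coth(c\mu)$ are exactly the paper's). Where you genuinely diverge is in how the stochastic convergence is upgraded. The paper centers the reward sum at the \emph{deterministic} index $\lceil g(c)t\rceil$, applies Doob's maximal inequality to the martingale $c\sum_{i\leq n}\rbr{Z_i(c)-\ev{Z_i(c)}}$ to get $\ev{}\rbr{\sup_{t\leq T}(\cdots)^2}\cleq Tc^2$ in one stroke (uniformity in $t$ for free), and then deals with the random index separately through the convergence of the inverse clock $V_c$, referring back to the second part of the proof of \lemref{BMquadruple}. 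You instead work directly with the randomly indexed sum, which is what forces the Wald-type second-moment computation you flag as the main obstacle; that bound, $\Var{c\,\TTV Xtc}=O(c^2)$, is indeed provable (Wald's second identity applied to the centered increments $X_i(c)=G_i(c)-(\ev{G_1(c)}/\ev{D_1(c)})D_i(c)$, whose moments are already computed in \lemref{BMquadruple}, plus control of one boundary increment), but it is precisely the step the paper's deterministic-index route is designed to sidestep. What your route buys is a more explicit treatment of the almost sure and uniform upgrade: the Chebyshev--Borel--Cantelli argument along $c_n=\theta^n$, the interpolation via monotonicity of $c\mapsto\TTV Xtc$ with $\theta\uparrow 1$, and the Dini/P\'olya step using monotonicity in $t$ are all correct and are exactly the details the paper leaves to the reader (its sketch asserts a.s.\ convergence directly after the $L^2$ bound, and some such subsequence-plus-monotonicity argument is needed there as well). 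One small caveat: ``each $G_i(c)=O(c)$'' holds only in the sense of moments, since the $G_i(c)$ are unbounded with exponential tails at scale $c$; this still suffices to kill the last partial increment, but via the same Borel--Cantelli/monotonicity treatment rather than a pathwise bound.
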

\begin{proof}
Firstly, we recall $S_{c}(n)$ defined in \eqref{defSC} and $Z_{i}(c)$
given by \eqref{zi}. We want to show that process $X_{t}(c):=cS_{c}(\lceil g(c)t\rceil)$
converges to a linear function. Let us consider 
\[
M_{n}(c):=c\sum_{i=1}^{n}(Z_{i}(c)-\ev{Z_{i}(c)}).
\]
It is a centered martingale. Differentiation of \eqref{laplaceZ}
yields that (we have $a=1,b=0$ in this case)
\[
\ev{(Z_{i}(c)-\ev{Z_{i}(c)})^{2}}=\frac{2\cosh(2c\mu)\sinh(c\mu)^{2}}{\mu^{2}}=2c^{2}+O(c^{3}).
\]
Therefore, by the Doob inequality and \eqref{expectanceD} we have
\[
\ev{}\rbr{\sup_{t\in[0;T]}\left[cS_{c}(\lceil g(c)t\rceil)-c\lceil g(c)t\rceil\ev{Z_{i}(c)})\right]^{2}}\leq LTc^{2},
\]
for some constant $L$. Using \eqref{expectanceZ} and \eqref{drift}
one obtains 
\[
X_{t}(c)\conv id,\quad\text{a.s.,}
\]
where $id(t)=t$ and the converge holds in $\crt$ topology. Now one
proves an analogous convergence for process $V_{c}(\lceil g(c)t\rceil)$,
which is roughly speaking the inverse of $M_{c}$. To finish the proof
one needs to argue similarly as in the second part of the proof of
\lemref{BMquadruple}.
\end{proof}

\subsection{Proof for semimartingales}

Now we assume that $X_{t}=X_{0}+M_{t}+A_{t},$where $M$ is a continuous
local martingale and $A$ is a process with bounded variation.

To avoid notational inconveniences we assume that $M,A$ are defined
on $[0;+\infty)$ (one can simply put a constant process after $T$).
Let us now introduce an additional standard Brownian motion $\beta$
independent of $X$ and denote
\[
X^{\epsilon}:=X+\epsilon\beta,\quad\epsilon>0.
\]
This is a simple trick to avoid the case when $\sb X$ is not strictly
increasing. Indeed we have $\sb{X_{t}^{\epsilon}}=\sb{X_{t}}+\epsilon t$.
Obviously this is a strictly increasing function. Moreover its inverse,
denoted by $\alpha$, is almost surely Lipschitz with constant smaller
then $\epsilon^{-1}$. Let us denote also $M^{\epsilon}:=M+\epsilon\beta$.
The DDS theorem \cite[Theorem V.1.6]{Revuz:1991kx} ensure that there
exists a Brownian motion $B$ such that 
\[
M_{t}^{\epsilon}:=B_{\sb{X_{t}^{\epsilon}}},\quad t\in[0;T].
\]
Using (\ref{timechange}) we have
\begin{equation}
c\:\TTV{X^{\epsilon}}tc=c\:\TTV{B_{t}+A_{\alpha_{t}}}{\sb{X^{\epsilon}}_{t}}c.\label{eq:tmp1}
\end{equation}
Let us fix $N>0$. Applying (\ref{eq:lipschitzcondition}) to the
paths of $B_{t}+A_{\alpha_{t}}$ we get 
\[
|\TTV{B_{t}+A_{\alpha_{t}}}{\sb{X_{t}^{\epsilon}}\wedge N}c-\TTV{B_{t}}{\sb{X_{t}^{\epsilon}}\wedge N}c|\leq\TTV{A_{\alpha_{t}}}{\sb{X^{\epsilon}}_{t}\wedge N}{}.
\]
Using \lemref{LLNBM} and the above estimate one gets that
\[
c\:\TTV{B_{t}+A_{\alpha_{t}}}{\sb{X_{t}^{\epsilon}}\wedge N}c\conv_{c\searrow0}\sb{X_{t}^{\epsilon}}\wedge N\quad\text{a.s.}
\]
where convergence is understood in $\mathcal{C}([0;T],\R)$ topology.
Moreover, the limit agrees with the limit of \eqref{tmp1} on the
set $\cbr{\sb{X^{\epsilon}}\leq N}$. Hence we obtain
\[
c\:\TTV{X^{\epsilon}}tc\conv\sb{X^{\epsilon}},\quad\text{a.s.}
\]
Our aim now is to get rid of $\epsilon$. We fix some $\alpha\in(0,1)$
and notice that by (\ref{eq:sum}) we have 
\[
c\:\TTV Xtc\leq c\:\TTV{X^{\epsilon}}t{\alpha c}+c\:\TTV{\epsilon B}t{(1-\alpha)c}.
\]
Therefore we have
\[
\limsup_{c\searrow0}c\:\TTV Xtc\leq\alpha^{-1}\sb{X_{t}^{\epsilon}}+(1-\alpha)^{-1}\epsilon^{2}t=\alpha^{-1}\sb{X_{t}}+\rbr{(1-\alpha)^{-1}+\alpha^{-1}}\epsilon^{2}t.
\]
By converging $\epsilon\conv0$ and $\alpha\rightarrow1$ one can
obtain
\[
\limsup_{c\searrow0}c\:\TTV Xtc\leq\sb{X_{t}},\quad\text{a.s.}
\]
Analogously one obtains a lower-bound for $\liminf$. Therefore we
proved that for any $t>0$ we have 
\[
\lim_{c\searrow0}c\:\TTV Xtc=\sb{X_{t}},\quad\text{a.s.}
\]
This is a one dimensional convergence but one easily extends it to
the finite dimensional one. Moreover, since the trajectories are almost
surely increasing the finite dimensional convergence can be upgraded
to the functional one. This follows by the simple fact that if $f_{n}\in\crt$
is a sequence of continuous increasing functions converging point-wise
to a continuous function then the convergence is in fact uniform.

In order to prove the convergence for $\text{UTV}^{c}$ it suffices
to use \eqref{utvfromTV}. $\text{DTV}^{c}$ follows similarly.

\section{Proof of large times results\label{sec:ProofLargeTimes}}

In this section we will only prove \thmref{TVTimeRescale}. It follows
by a similar argument as in the proof of \lemref{BMquadruple}. This
time $c$ is fixed and $n$ will go to infinity. The analogues of
\eqref{zi} and \eqref{di} are given by
\[
Z_{i}(n):=n^{-1/2}Y_{i}(c).
\]
and
\[
D_{i}(n):=n^{-1}(T_{U,i}^{c}-T_{U,i-1}^{c}),\quad i\geq1,\text{ and }\: D_{0}(n):=n^{-1}T_{U,0}^{c}.
\]
By \eqref{drift} (with $a=n^{1/2},b=0$) we have
\[
\frac{\ev{Z_{i}(n)}}{\ev{D_{i}(n)}}=n^{-1/2}\mu\coth(c\mu).
\]
We define $X_{i}(n):=Z_{i}(n)-(\ev{Z_{i}(n)}/\ev{D_{i}(n)})D_{i}(n)$.
Repeating calculations as in the proof of \lemref{BMquadruple} one
obtains
\[
\text{Var}(X_{i}(n))=\frac{3+\cosh(2c\mu)-4c\mu\coth(c\mu)}{n\mu^{2}}.
\]
One checks that $\text{Var}(X_{i}(n))/\ev{D_{i}(n)}=\rbr{\sigma_{\mu}^{c}}^{2}$
as in \thmref{TVTimeRescale}. Now in order to obtain this theorem
it is enough to apply Fact \factref{AnscombeLike}. This is an easy
task. Above we already checked (A1), (A2) and (A3) are trivial. (A4)
holds with any $\delta>0$. 

We skip the proof of \factref{TVnas} which is a simpler version of
the proof in Section \ref{sub:LLNSmallCWiner}. Proofs of \thmref{UTVimeRescale}
follows similarly to the one above with an exception that $Z_{i}(n)=M_{i}^{c}-m_{i}^{c}-c$
in the case of $\text{UTV}^{c}$ and $Z_{i}(n)=M_{i}^{c}-m_{i+1}^{c}-c$
in the case of $\text{DTV}^{c}$.

\bibliographystyle{plain}
\bibliography{branching}

\end{document}